\documentclass[11pt]{amsproc}

\setlength\textwidth{155mm}
\hoffset=-15mm
\setlength\textheight{230mm}
\voffset=-15mm

\usepackage{mathtext}
\usepackage[cp1251]{inputenc}

\usepackage{bm}

\usepackage[dvips]{graphicx}
\usepackage{amsmath}
\usepackage{amssymb}
\usepackage{amsxtra}

\usepackage{caption}
\captionsetup[figure]{labelfont=bf,labelsep=space}

\def\N{{{\Bbb N}}}
\def\Z{{{\Bbb Z}}}
\def\T{{{\Bbb T}}}
\def\R{{\Bbb R}}
\def\C{{\Bbb C}}
\def\l{{\lambda }}
\def\a{{\alpha }}
\def\D{{\Delta }}

\def\a{{\alpha}}

\def\d{{\delta}}

\def\s{{\sigma}}
\def\vp{{\varphi}}
\def\t{{\theta }}

\def\u{{\bm u}}

\def\j{{\bm j}}
\def\k{{\bm k}}

\def\x{{\bm x}}

\def\rr{{\bm r}}
\def\elll{{\bm \ell}}

\newcommand{\h}{\widehat}
\newcommand{\w}{\widetilde}

\def\){\right)}
\def\({\left(}
\def\supp{\operatorname{supp}}

\def\sign{\operatorname{sign}}

\numberwithin{equation}{section}

\newtheorem{corollary}{Corollary}[section]
\newtheorem{lemma}{Lemma}[section]
\newtheorem{theorem}{Theorem}[section]
\newtheorem{proposition}{Proposition}[section]
\newtheorem{remark}{Remark}[section]
\newtheorem{definition}{Definition}[section]

\par

\sloppy

\begin{document}

\title[Approximation by quasi-interpolation operators and Smolyak's algorithm]{Approximation by quasi-interpolation operators and Smolyak's algorithm}

\author[Yurii
Kolomoitsev]{Yurii
Kolomoitsev$^{\text{a, b, 1}}$}
\address{Universit\"at zu L\"ubeck,
Institut f\"ur Mathematik,
Ratzeburger Allee 160,
23562 L\"ubeck}

\email{kolomoitsev@math.uni-luebeck.de}

\thanks{$^\text{a}$Universit\"at zu L\"ubeck,
Institut f\"ur Mathematik,
Ratzeburger Allee 160,
23562 L\"ubeck, Germany}

\thanks{$^\text{b}$Institute of Applied Mathematics and Mechanics of NAS of Ukraine, General Batyuk Str.~19, Slov’yans’k, Donetsk region, Ukraine, 84116}

\thanks{$^1$Supported by DFG project KO 5804/1-2}


\thanks{E-mail address: kolomoitsev@math.uni-luebeck.de}

\date{\today}
\subjclass[2010]{41A25, 41A63, 42A10, 42A15, 41A58, 	41A17, 	42B25, 42B35} \keywords{Smolyak algorithm, quasi-interpolation operators,  Kantorovich operators, Besov--Tribel--Lizorkin spaces of mixed smoothness, error estimates, Littlewood--Paley-type characterizations}

\begin{abstract}
We study approximation of multivariate periodic functions from Besov and Triebel--Lizorkin spaces of dominating mixed smoothness by the Smolyak algorithm constructed using a special class of quasi-interpolation operators of Kantorovich-type. These operators are defined similar to the classical sampling operators by replacing samples  with the average values of a function on small intervals (or more generally with sampled values of a convolution of a given function with an appropriate kernel). In this paper, we estimate the rate of convergence of the corresponding Smolyak algorithm in the $L_q$-norm for functions from the Besov spaces $\mathbf{B}_{p,\theta}^s(\mathbb{T}^d)$ and the Triebel--Lizorkin spaces $\mathbf{F}_{p,\theta}^s(\mathbb{T}^d)$ for all $s>0$ and admissible $1\le p,\theta\le \infty$ as well as provide analogues of the Littlewood--Paley-type characterizations of these spaces in terms of families of quasi-interpolation operators.
 \end{abstract}

\maketitle

\section{Introduction}

In this paper, we study approximation of multivariate periodic functions from Besov and Triebel--Lizorkin spaces of dominating mixed smoothness  by the Smolyak algorithm constructed using general quasi-interpolation operators.  Recall
that for a given family of univariate operators $Y=(Y_j)_{j\in \Z_+}$, the Smolyak algorithm is defined as follows:
\begin{equation}\label{sm1}
  T_n^Y:=\sum_{\j\in \Z_+^d:\,|\j|_1\le n} \Delta_\j^Y,\quad \Delta_\j^Y:=\prod_{i=1}^d (Y_{j_i}^i-Y_{j_i-1}^i),
\end{equation}
where $Y_j^i$ denotes the univariate operator $Y_j$ acting on functions in the variable $x_i$ and $Y_{-1}=0$.

There are many works dedicated to the study of approximation properties of the Smolyak algorithm applied to different families $Y$ (see, e.g.,~\cite[Chapters~4 and~5]{DTU18} and references therein).
The standard families $Y$, for which the Smolyak algorithm has been well studied, are the sampling operators $I=(I_j)_{j\in \Z_+}$ and the de la Vall\'ee Poussin  means $V=(V_j)_{j\in \Z_+}$ defined by
\begin{equation}\label{I}
  I_j(f)(x)=2^{-j}\sum_{k\in A_j} f(x_k^j)v_j(x-x_k^j), \quad x_k^j=\frac{\pi k}{2^{j-1}},
\end{equation}
and
\begin{equation}\label{S}
  V_j(f)(x)=\frac{1}{2\pi}\int_{\T}f(t)v_j(x-t)dt,
\end{equation}
respectively. Here,  $A_j=[-2^{j-1},2^{j-1})\cap \Z$ and  $v_j(x)=\sum_{k} \eta(2^{-j}k)e^{{\rm i}kx}$ is a de la Vall\'ee Poussin kernel with some generator $\eta$ (as a rule $\eta$ is a continuous function, $\supp \eta\subset (-1,1)$, and $\eta(\xi)=1$ if $|\xi|\le \rho$ for some $\rho>0$).

Let us recall some $L_p$-error estimates for the Smolyak algorithms $T_n^I$ and $T_n^V$ in the case of
functions from the Besov space of dominating mixed smoothness $\mathbf{B}_{p,\t}^s(\T^d)$ with $1\le p,\t\le \infty$ (see Definition~\ref{d3}). We have (see, e.g.,~\cite{SU07} and~\cite[Chapters~4 and~5]{DTU18}):

(i) if $s>0$, then
\begin{equation}\label{S2}
   \sup_{f\in U\mathbf{B}_{p,\t}^s }\|f-T_n^V(f)\|_{L_p(\T^d)} \asymp
\left\{
    \begin{array}{ll}
     2^{-s n}n^{(d-1)(\frac1p-\frac1\t)}, & \hbox{$p\le 2$ and $p\le \t$,} \\
     2^{-s n}n^{(d-1)(\frac12-\frac1\t)}, & \hbox{$2<p$ and $2<\t$,} \\
     2^{-s n}, & \hbox{otherwise,}
    \end{array}
\right.
\end{equation}

(ii) if $s>1/p$, then
\begin{equation}\label{I2}
   \sup_{f\in U\mathbf{B}_{p,\t}^s }\|f-T_n^I(f)\|_{L_p(\T^d)} \asymp 2^{-s n}n^{(d-1)(1-\frac1\t)},
\end{equation}
where $U\mathbf{B}_{p,\t}^s$ denotes the unit ball in  $\mathbf{B}_{p,\t}^s(\T^d)$. In particular, estimates~\eqref{S2} and~\eqref{I2} show that the Smolyak algorithm generated by convolution-type operators has better approximation properties in $L_p(\T^d)$ spaces than the corresponding algorithm generated by sampling operators.
In the case of Triebel--Lizorkin spaces of dominating mixed smoothness, similar error estimates can be found, e.g., in~\cite{BU17, DTU18}.

The main goal of this paper is to obtain analogues of estimates~\eqref{S2} and~\eqref{I2} for the Smolyak algorithm applied to a family of general quasi-interpolation operators $Q=(Q_j)_{j\in \Z_+}$ given by 
\begin{equation}\label{Q0}
  Q_j(f)(x)=Q_j(f,\vp,\w\vp)(x)=2^{-j}\sum_{k\in A_j} (f*\w\vp_j)(x_k^j)\vp_j(x-x_k^j),
\end{equation}
where $\vp=(\vp_j)_{j\in \Z_+}$ is a family of univariate trigonometric polynomials, $\w\vp=(\w\vp_j)_{j\in \Z_+}$ is a family of functions/distributions, and the convolution $f*\w\vp_j$ is defined in some suitable way.
A classical example of operator~\eqref{Q0} is the mentioned above sampling operator $I_j$
(in this case, $\w\vp_j=\delta$ is the periodic delta-function and $\vp_j=V_j$ is the de la Vall\'ee Poussin kernel).
Other related operators can be obtaining from~\eqref{Q0} if, for example, $\w\vp_j$ is a linear combination of the shifted delta functions and $\vp_j$ is some classical kernel (Dirichlet, Riesz, Fej\'er, etc.).
One has a quite different class of operators if $\w\vp_j$ is a certain integrable function, for example if $\w\vp_j$ is a characteristic function of a bounded interval in~$\T$.
In this case, the operators of form~\eqref{Q0} are usually called Kantorovich-type operators.
In the recent years, such operators have been intensively studied in many works, see, e.g., \cite{BBSV, CV19, CSV20, KKS18, KS17, KS20, KS19} for the non-periodic case and~\cite{JBU02, KKS20, KP20} for recent results in the periodic case.
It is worth noting that the Kantorovich-type operators have several advantages over the interpolation and sampling operators. 
Particularly, using the averages
of a function instead of the sampled values $f(x_k^j)$ allows to deal with discontinues signals and to reduce the so-called time-jitter errors, which is very useful in digital image processing.
Also, the general quasi-interpolation operators~\eqref{Q0} can be used in applied problems, which contain noisy data and the functional information is provided by other means than point evaluation (for example integrals, averaging, divided differences etc.), see, e.g.,~\cite{CSV20}.

Here, we mention some approximation properties of the periodic Kantorovich-type operators.
Let $\vp_j=v_j$ and let $\w\vp_j=2^{j+\s}\chi_{[-\pi2^{-j-\s},\pi2^{-j-\s}]}$ be a normalized characteristic function of the interval $[-\pi2^{-j-\s},\pi2^{-j-\s}]$, where $\s\ge 2$. Denote $K_j=Q_j(\cdot,\vp,\w\vp)$, i.e.,  
\begin{equation}\label{K1}
   K_j(f)(x)=\sum_{k\in A_j} \frac{2^{\s-1}}{\pi}\int_{-\pi 2^{-j-\s}}^{\pi2^{-j-\s}} f\big(t+x_k^j\big){\rm d}t\, v_j\big(x-x_k^j\big).
\end{equation}
It was proved in~\cite{KP20} that for every $f\in L_p(\T)$, $1\le p\le \infty$, and $j\in \Z_+$, we have
\begin{equation}\label{K2}
   \|f-K_j(f)\|_{L_p(\T)}\asymp \sup_{|h|\le 2^{-j}}\|\D_h^2 f\|_{L_p(\T)}, 
\end{equation}
where $\D_h^2 f(x)=f(x+h)-2f(x)+f(x-h)$ and $\asymp$ is a two-sided inequality with positive constants depending only on $p$, $\s$, and $\rho$.
At the same time,  if in~\eqref{K1}  we replace the de la Vall\'ee Poussin kernel $v_j$ by its modification \ $v_j^*(x)=\sum_{\ell\neq 0}\frac{\pi2^{-j-\s}\ell}{\sin \pi2^{-j-\s}\ell}\eta(2^{-j}\ell) {e}^{{\rm i} \ell x}$, then for the corresponding Kantorovich-type operator $K_j^*$, we have (see~\cite{KP20}) the following estimate:
\begin{equation}\label{K3}
   \|f-K_j^*(f)\|_{L_p(\T)}\lesssim E_{\rho 2^j}(f)_p,\quad f\in L_p(\T),
\end{equation}
where  $E_{\rho 2^j}(f)_p$ denotes the $L_p$-error of the best approximation of $f$ by trigonometric polynomials of degree at most $\rho 2^j$.
In this paper, we show (see Theorems~\ref{tb3} and~\ref{tf3} as well as Examples) that although the Kantorovich-type operator $K_j^*$ is well-defined for all $f\in L_1(\T)$ and has almost the same approximation properties as the da la Vall\'ee Poussin means $V_j$ (recall that $\|f-V_j(f)\|_p\lesssim E_{\rho 2^j}(f)_p$, see, e.g.,~\cite[p.~137]{TB}), the $L_p$-approximation order of $T_n^{K^*}$  coincides with the corresponding order in the case of approximation by the sampling operator $T_n^I$ (cf.~\eqref{I2}). However, in contrast with $T_n^I$, the analogue of estimate~\eqref{I2} for $T_n^{K^*}$ is valid for all $s>0$. More generally, considering sufficiently wide classes of quasi-interpolation operators~\eqref{Q0},  we obtain analogues of estimate~\eqref{I2} in the $L_q$-norm for functions from the Besov spaces $\mathbf{B}_{p,\t}^s(\T^d)$ and the Triebel--Lizorkin spaces $\mathbf{F}_{p,\t}^s(\T^d)$ for all $s>0$ and admissible $1\le p,\t\le \infty$.

In the study of approximation (function recovery) by the Smolyak algorithms, 
Littlewood--Paley-type characterizations of Sobolev and Besov spaces are among the most important tools. 
Recall the following well-known relations:
\begin{equation}\label{LP1}
  \bigg\Vert \bigg(\sum_{j\in \Z_+} 2^{2sj}|\D_j(f)|^2\bigg)^{1/2} \bigg\Vert_{L_p(\T)} \asymp \Vert f\Vert_{W_p^s (\T)},\quad s\ge 0,\, 1<p<\infty,
\end{equation}
and
\begin{equation}\label{LP2}
\(\sum_{j\in \Z_+} 2^{s\t j} \|\D_j(f)\|_{L_p(\T)}^\t\)^{1/\t} \asymp \|f\|_{B_{p,\t}^s(\T)},\quad s>0,\, 1<p<\infty, \, 1\le \t\le\infty,
\end{equation}
where $\D_j (f)(x) =\sum_{2^{j}-1\le |\ell|< 2^j} \h f(\ell) {\rm e}^{{\rm i} \ell x}$, and $W_p^s(\T)$ and $B_{p,\t}^s(\T)$ are the univariate Sobolev and Besov spaces, respectively.
The corresponding inequalities (characterizations) in the multi-dimensional case
have also been well-known for a long time (see, e.g.,~\cite{ST87} and~\cite{N75}).
We are interested in similar relations but with other building blocks, particularly with a certain "discrete" convolution instead of the trigonometric polynomials $\D_j$.   
As a rule, such relations are called the discrete Littlewood--Paley-type characterizations of smooth function spaces.
The first results of this type for the classical isotropic Besov and Sobolev spaces were obtained by Sickel~\cite{S91}  (see also~\cite{S92} for the non-periodic case).  In the case of periodic Besov spaces of dominating mixed smoothness, the corresponding characterizations were derived by Dihn D\~ung~\cite{D01}, the case of Sobolev spaces was recently considered by Byrenheid and Ullrich in~\cite{BU17} (see also~\cite{BDSU16}). Note that in the mentioned papers, the  corresponding analogues of~\eqref{LP1} and~\eqref{LP2} were obtained by means of the classical sampling and interpolation operators of type~\eqref{I}.
As usual in such kind of problems, the results were achieved under the following natural restrictions on the parameter of smoothness: $s>\max\{1/p,1/2\}$ (the case of Sobolev spaces) and $s>1/p$ (the case of Besov spaces).

In this paper, we obtain analogues of the "discrete" Littlewood--Paley-type characterizations for the Besov spaces $\mathbf{B}_{p,\t}^s(\T^d)$ and the Triebel--Lizorkin spaces $\mathbf{F}_{p,\t}^s(\T^d)$ for all $s>0$ using the general quasi-interpolation operators~\eqref{Q0} for the construction of the corresponding building blocks.
Our method is essentially based on the approaches developed in~\cite{BU17} and~\cite{D01}.

The paper is organized as follows.  In Section~\ref{sec2} we introduce some basic notation used throughout the paper.
Section~\ref{sec3} is dedicated to definitions and auxiliary results. 
In Section~\ref{sec4} we present and prove our main results for functions belonging to the Besov spaces of dominating mixed smoothness. Particularly, in Theorem~\ref{tb1} we consider Littlewood--Paley-type characterizations in terms of a family of general quasi-interpolation operators $Q=(Q_j)_{j\in \Z_+}$ and in Theorems~\ref{tb2} and~\ref{tb3}, we provide estimates for approximation by the Smolyak algorithm $T_n^Q$ in $L_q$-norm for functions from the Besov classes $\mathbf{B}_{p,\t}^s(\T^d)$ .  In Section~\ref{sec5} we present similar results in the case of the Triebel--Lizorkin spaces $\mathbf{F}_{p,\t}^s(\T^d)$. Section~\ref{sec6} is dedicated to examples. Finally, in Section~\ref{sec7} we give some remarks about approximation properties of the Smolyak algorithm applied to general sampling operators.

\section{Notation}\label{sec2}

We use the standard multi-index notations.
    Let $\N$ be the set of positive integers, $\R^d$ be the $d$-dimensional Euclidean space,
    $\Z^d$ be the integer lattice  in $\R^d$, $\Z_+^d:=\{\x\in\Z^d:~x_i\geq~{0}, i=1,\dots, d\}$,
    $\T^d=\R^d\slash2\pi\Z^d$ be the $d$-dimensional torus. 
    Further, let  $\x = (x_1,\dots, x_d)$ and
    $\k =(k_1,\dots, k_d)$ be vectors in $\R^d$.
    Then $(\x, \k):=x_1k_1+\dots+x_dk_d$ and for a given  $i\in \{1,\dots,d\}$, we denote
    $\x_i^d=(x_i, x_{i+1},\dots,x_d)\in \R^{d-i+1}$.
		If ${\bm j}\in\Z^d_+$,  we set
    $|\bm{j}|_1=\sum_{k=1}^d j_k$ and $2^{\j}=(2^{j_1},\dots,2^{j_d})$.

By $L_p(\Omega)$, $1\le p\le \infty$, we denote the space of all measurable
functions $f : \Omega\mapsto \C$ such that
the following norms are finite
$$
\|f\|_{L_p(\Omega)} = \Big(\int_{\Omega}|f(\x)|^p {\rm d}\x\Big)^{1/p}\quad \text{for}\quad  1\le p<\infty
$$
and
$$
\|f\|_{L_\infty(\Omega)}={{\rm ess} \sup}_{\x\in \Omega} |f(\x)| \quad \text{for}\quad  p=\infty.
$$
As usual, we take $L_\infty=C(\Omega)$ (the set of all bounded and continuous functions $\Omega$) and write $\|\cdot\|_p=\|\cdot\|_{L_p(\T^d)}$.  
The sequence space $\ell_p^r$, where $p\ge 1$ and $r\in \R$, is defined by
$$
\ell_p^r=\Big\{a=(a_\j)_{\j\in \Z_+^d}\subset \C\,:\, \|a\|_{\ell_p^r}=\Big(\sum_{\j\in \Z_+^d} 2^{r|\j|_1 p}|a_\j|^p\Big)^{1/p}<\infty\Big\}.
$$
In the case $r=0$, we denote $\ell_p=\ell_p^0$.
Next, for any $1\le p\le \infty$, we define the spaces $L_p(\T^d,\ell_\t)$ and $\ell_\t(\T^d,L_p)$ as a collection of all sequences of functions $(f_\j)_{\j\in \Z_+^d}\subset L_p(\T^d)$ with finite norms
$$
\|f_\j\|_{L_p(\T^d,\ell_\t)}=\left\{
                          \begin{array}{ll}
                            \Big\|\(\sum_{\j\in \Z_+^d}|f_\j|^\t\)^{1/\t}\Big\|_p, & \hbox{$1\le \t<\infty$,} \\
                            \|\sup_{\j\in \Z_+^d}|f_\j|\|_p, & \hbox{$\t=\infty$,}
                          \end{array}
                        \right.
$$
and
$$
\|f_\j\|_{\ell_\t(\T^d,L_p)}=\left\{
                          \begin{array}{ll}
                            \(\sum_{\j\in \Z_+^d}\|f_\j\|_p^\t\)^{1/\t}, & \hbox{$1\le \t<\infty$,} \\
                            \sup_{\j\in \Z_+^d}\|f_\j\|_p, & \hbox{$\t=\infty$,}
                          \end{array}
                        \right.
$$
respectively.
The unit ball in some normed vector space $X$ is denoted by $UX$.

If $f\in L_1(\T^d)$, then
$$
\h f(\k)=(2\pi)^{-d}\int_{\T^d} f(\x){e}^{-{\rm i}(\k,\x)}{\rm d}\x,\quad \k\in\Z^d,
$$
denotes the $k$-th Fourier coefficient of $f$.
As usual, the convolution of integrable functions $f$ and $g$ is given by
$$
(f*g)(\x)=(2\pi)^{-d}\int_{\T^d} f(\x-{\bm  t})g({\bm  t}){\rm d}{\bm  t}.
$$
By $\mathcal{T}_\j^d$, $j\in \Z_+^d$, we denote the following set of trigonometric polynomials:
$$
\mathcal{T}_\j^d={\rm span}\left\{e^{{\rm i}(\k,\x)}\,:\,\k\in  [-2^{j_1},2^{j_1})\times\dots\times [-2^{j_d},2^{j_d})\cap \Z^d\right\}.
$$
We say that a sequence of trigonometric polynomials $(t_\j)_{\j\in \Z_+^d}$ belongs to the space $L_p^\mathcal{T}(\T^d,\ell_\t)$ if
$t_\j\in \mathcal{T}_\j^d$ for any $\j\in \Z_+^d$ and $\|(t_\j)_{\j}\|_{L_p(\T^d,\ell_\t)}<\infty$.

Throughout the paper, we use the notation
$
\, A \lesssim B,
$
with $A,B\ge 0$, for the estimate
$\, A \le C\, B,$ where $\, C$ is a positive constant independent of
the essential variables in $\, A$ and $\, B$ (usually, $f$, $j$, and $n$). 
If $\, A \lesssim B$
and $\, B \lesssim A$ simultaneously, we write $\, A \asymp B$ and say that $\, A$
is equivalent to $\, B$.
For two function spaces
$\, X$ and $\, Y,$ we will use the notation
$
\, Y \hookrightarrow X
$
if
$\, Y \subset X$ and $\, \| f\|_X \lesssim \| f\|_Y$ for all $\, f \in
Y.$
For $1\le p\le\infty$, $p'$ is given by
$\frac{1}{p}+\frac{1}{p'}=1.$

\section{Basic definitions and auxiliary results}\label{sec3}

\subsection{Smooth function spaces}\label{sec3.1}

Let us give the classical analytical definition of periodic Besov spaces $\mathbf{B}_{p,\t}^{\rr}(\T^d)$ and Triebel--Lizorkin spaces $\mathbf{F}_{p,\t}^{\rr}(\T^d)$ with dominating mixed smoothness. First, we introduce a smooth resolution of unity.
We say that a system $\phi=(\phi_j)_{j\in \Z_+}\subset C_0^\infty(\R)$ belongs to the class $\Phi(\R)$ if it satisfies the following conditions:
\begin{itemize}
  \item[$(i)$] $\supp \phi_0\subset \{\xi\,:\,|\xi|\le 2\}$;
  \item[$(ii)$] $\supp \phi_j\subset \{\xi\,:\,2^{j-1}\le |\xi|\le 2^{j+1}\}$, $j=1,2,\dots$;
  \item[$(iii)$] for all $\ell\in \Z_+$, it holds $\sup_{\xi\in \R,\, j\in \Z_+}2^{j\ell}|\phi_j^{(\ell)}(\xi)|\le c_\ell<\infty$;
  \item[$(iv)$] $\sum_{j=0}^\infty \phi_j(\xi)=1$ for all $\xi\in \R$.
\end{itemize}
Next, for any $f\in L_1(\T^d)$ and $\j\in \Z_+^d$, we denote
$$
\d_\j(f)(\x)=\sum_{\k\in \Z^d}\phi_{j_1}(k_1)\dots \phi_{j_d}(k_d)\h f(\k) {e}^{{\rm i}(\k,\x)}.
$$
Note that condition $(iv)$ implies that
\begin{equation}\label{1}
  f=\sum_{\j\in \Z_+^d}\d_\j(f)
\end{equation}
with convergence in the sense of distributions.

Now we are ready to introduce the Triebel--Lizorkin and Besov spaces.

\begin{definition}\label{d3}
Let $\phi=(\phi_j)_{j\in \Z_+}\in \Phi(\R)$ and $r\in \R$.

\begin{itemize}
  \item[$1)$] For $1\le p<\infty$ and $1\le \t\le \infty$, the Triebel--Lizorkin space $\mathbf{F}_{p,\t}^r(\T^d)$ is defined as the collection of all $f\in L_1(\T^d)$ such that
$$
\|f\|_{\mathbf{F}_{p,\t}^r}:=\|2^{r|\j|_1}\d_\j(f)\|_{L_p(\T^d,\ell_\t)}<\infty.
$$

  \item[$2)$] For $1\le p\le\infty$ and $1\le \t\le \infty$, the Besov space $\mathbf{B}_{p,\t}^r(\T^d)$ is defined as the collection of all $f\in L_1(\T^d)$ such that
\begin{equation}\label{defb}
  \|f\|_{\mathbf{B}_{p,\t}^r}:=\|2^{r|\j|_1}\d_\j(f)\|_{\ell_\t(\T^d,L_p)}<\infty.
\end{equation}
\end{itemize}
\end{definition}

Recall that in the case $\t=2$ and $1<p<\infty$ the space $\mathbf{F}_{p,\t}^r(\T^d)$, $r\ge 0$, coincides with the Sobolev space of dominating mixed smoothness $\mathbf{W}_p^r(\T^d)$ with the norm given by
$$
\|f\|_{\mathbf{W}_p^r}:=\bigg\|\sum_{\k\in\Z^d}\prod_{i=1}^d (1+|k_i|^2)^\frac{r}{2}\h f(\k){e}^{{\rm i}(\k,\x)}\bigg\|_p.
$$

Recall also the following basic embedding (see, e.g.,~\cite{ST87} and~\cite{N75}):

\begin{itemize}
  \item[$1)$] if $1\le p\le \infty$ ($p<\infty$ for $F$-spaces) and $0<\t\le \infty$, $r>1/p$, then
\begin{equation}\label{e1}
  \mathbf{F}_{p,\t}^r(\T^d)\hookrightarrow C(\T^d)\quad\text{and}\quad \mathbf{B}_{p,\t}^r(\T^d)\hookrightarrow C(\T^d);
\end{equation}
  \item[$2)$] if $1\le p<\infty$, $0< \t\le \infty$, and $r\in \R$, then
\begin{equation}\label{e2}
  \mathbf{B}_{p,\min\{p,\t\}}^r(\T^d)\hookrightarrow \mathbf{F}_{p,\t}^r(\T^d)\hookrightarrow \mathbf{B}_{p,\max\{p,\t\}}^r(\T^d);
\end{equation}
  \item[$3)$] if $1\le p\le \infty$ ($p<\infty$ for $F$-spaces),  $0<\t_1<\t_2\le \infty$, and $r\in \R$, then
\begin{equation}\label{e3}
  \mathbf{F}_{p,\t_1}^r(\T^d)\hookrightarrow \mathbf{F}_{p,\t_2}^r(\T^d)\quad\text{and}\quad \mathbf{B}_{p,\t_1}^r(\T^d)\hookrightarrow \mathbf{B}_{p,\t_2}^r(\T^d);
\end{equation}
  \item[$4)$] if $1\le p<q<\infty$,  $0<\t,\nu \le \infty$, $r_1,r_2\in \R$, $r_2<r_1$, and $r_1-1/p=r_2-1/q$, then
\begin{equation}\label{e4}
  \mathbf{F}_{p,\t}^{r_1}(\T^d)\hookrightarrow \mathbf{F}_{q,\nu}^{r_2}(\T^d)\quad\text{and}\quad \mathbf{B}_{p,\t}^{r_1}(\T^d)\hookrightarrow \mathbf{B}_{q,\t}^{r_2}(\T^d).
\end{equation}
\end{itemize}

The following two lemmas were proved in~\cite{BU17}.

\begin{lemma}\label{llpb}
Let $1\le p,\t\le \infty$, $r>0$, and let $(t_\j)_{\j\in \Z_+^d}$ be such that $t_\j\in \mathcal{T}_{2^{\j}}^d$ and $\|2^{r|\j|_1} t_\j\|_{\ell_\t(\T^d,L_p)}<\infty$. Then $f=\sum_{\j\in \Z_+^d} t_\j$ belongs to $\mathbf{B}_{p,\t}^r(\T^d)$ and
\begin{equation}\label{llpb1}
  \|f\|_{\mathbf{B}_{p,\t}^r}\lesssim \|2^{r|\j|_1} t_\j\|_{\ell_\t(\T^d,L_p)}.
\end{equation}
\end{lemma}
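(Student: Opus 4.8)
\textbf{Proof plan for Lemma~\ref{llpb}.}
The plan is to show that the series $\sum_{\j} t_\j$ converges in $L_1(\T^d)$ (so that $f$ is a well-defined element of $L_1(\T^d)$ and its Fourier coefficients are the obvious ones), and then to estimate the Besov norm of $f$ by examining the Littlewood--Paley blocks $\d_\NN(f)$. The key point is a Fourier-support localization: since $t_\j\in\mathcal{T}_{2^{\j}}^d$ has spectrum in $\prod_i[-2^{j_i},2^{j_i})$, and $\phi_{N_i}$ is supported (in the $i$-th variable) in $\{2^{N_i-1}\le|\xi|\le 2^{N_i+1}\}$ for $N_i\ge1$ (and in $\{|\xi|\le2\}$ for $N_i=0$), the product $\phi_{N_1}(k_1)\cdots\phi_{N_d}(k_d)$ can be nonzero on the spectrum of $t_\j$ only when $j_i\ge N_i-1$ for every $i$, i.e. $\j\ge\NN-\mathbf{1}$ coordinatewise. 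Hence
\begin{equation}\label{llpb-split}
  \d_\NN(f)=\sum_{\j\ge \NN-\mathbf 1}\d_\NN(t_\j),
\end{equation}
a sum over $\j$ in a shifted positive orthant; only $2^d$ values of each offset $j_i-N_i\in\{-1,0,1,\dots\}$ are ``boundary'' ones, but in fact the whole tail contributes.

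Next I would use the boundedness of the de~la~Vall\'ee Poussin-type operators underlying the $\d_\NN$: conditions $(i)$--$(iii)$ in the definition of $\Phi(\R)$ together with the standard Marcinkiewicz-type multiplier theorem on $\T^d$ give $\|\d_\NN(g)\|_p\lesssim\|g\|_p$ with a constant independent of $\NN$, for all $1\le p\le\infty$ (for $p=1,\infty$ this is just the uniform $L_1$-bound on the corresponding kernels, which follows from $(i)$--$(iii)$). Applying this in each summand of \eqref{llpb-split} yields $\|\d_\NN(t_\j)\|_p\lesssim\|t_\j\|_p$ for $\j\ge\NN-\mathbf 1$, and $\d_\NN(t_\j)=0$ otherwise. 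Therefore
\begin{equation}\label{llpb-est}
  2^{r|\NN|_1}\|\d_\NN(f)\|_p\lesssim 2^{r|\NN|_1}\sum_{\j\ge\NN-\mathbf 1}\|t_\j\|_p
  =\sum_{\j\ge\NN-\mathbf 1}2^{-r|\j-\NN|_1}\,2^{r|\j|_1}\|t_\j\|_p.
\end{equation}
The right-hand side is a convolution (over the group $\Z^d$, restricted to the orthant) of the sequence $a_\j:=2^{r|\j|_1}\|t_\j\|_p$ with the summable kernel $b_\mm:=2^{-r|\mm|_1}\mathbf 1_{\mm\ge-\mathbf 1}$; since $r>0$, $\sum_\mm b_\mm=\big(\sum_{m\ge-1}2^{-rm}\big)^d<\infty$. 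Taking the $\ell_\t$ norm in $\NN$ and applying Young's inequality for convolutions ($\ell_1 * \ell_\t\to\ell_\t$) gives
\begin{equation*}
  \|f\|_{\mathbf{B}_{p,\t}^r}=\|2^{r|\NN|_1}\d_\NN(f)\|_{\ell_\t(\T^d,L_p)}\lesssim\|b\|_{\ell_1}\,\|a\|_{\ell_\t}=\|b\|_{\ell_1}\,\|2^{r|\j|_1}t_\j\|_{\ell_\t(\T^d,L_p)},
\end{equation*}
which is \eqref{llpb1}. The convergence of $\sum_\j t_\j$ in $L_1$ needed to justify manipulating Fourier coefficients follows from the same geometric-series bound with $p=1$ (and $\t=1$ by the embedding \eqref{e3}, or directly), together with the fact that $r>0$ makes $a\in\ell_\infty$ imply the partial sums are Cauchy in $L_1$ after grouping into blocks; alternatively one observes $\mathbf{B}_{p,\t}^r\hookrightarrow\mathbf{B}_{1,\infty}^0\hookrightarrow L_1$.

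The main obstacle is the uniform (in $\NN$) $L_p$-boundedness of $\d_\NN$ for the endpoint exponents $p=1$ and $p=\infty$: unlike the $1<p<\infty$ case where one may invoke an $L_p$ multiplier theorem, here one must verify directly from $(i)$--$(iii)$ that the periodized kernels $\sum_{\k}\phi_{N_1}(k_1)\cdots\phi_{N_d}(k_d)e^{{\rm i}(\k,\x)}$ have $L_1(\T^d)$ norms bounded independently of $\NN$. This is a classical estimate (tensor product of one-dimensional de~la~Vall\'ee Poussin-type kernels, each with uniformly bounded $L_1$ norm because of the derivative bounds in $(iii)$ via integration by parts / the standard $\|\cdot\|_1\lesssim\|\h g\|_\infty^{1/2}\|\h g''\|_\infty^{1/2}$-type inequality), but it is the one quantitative input the argument genuinely relies on; everything else is bookkeeping with geometric series and Young's inequality. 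Since the lemma is quoted from \cite{BU17}, one may simply cite that estimate there, but for completeness the above is the route I would take.
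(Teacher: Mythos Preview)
The paper does not give its own proof of this lemma; it simply records that the statement (together with the companion Lemma~\ref{llpf}) was proved in~\cite{BU17}. Your proposed argument is the standard route for this kind of result and is essentially the argument carried out in~\cite{BU17}: Fourier--support localization to restrict $\d_\NN(f)$ to a sum over $\j\ge\NN$ (or $\j\ge\NN-\mathbf{1}$; the extra boundary layer is harmless), the uniform $L_p$-boundedness of the building blocks $\d_\NN$ coming from the $L_1$-bound on the tensorized kernels (which is where conditions $(i)$--$(iii)$ on $\phi$ enter), and then Young's inequality with the geometric kernel $2^{-r|\mm|_1}$ to pass from the pointwise-in-$\NN$ estimate to the $\ell_\t$ bound.

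Two small remarks. First, your support computation actually gives $j_i\ge N_i$ for $N_i\ge1$ (since $\supp\phi_{N_i}\subset\{|\xi|\ge 2^{N_i-1}\}$ forces $2^{N_i-1}<2^{j_i}$), not merely $j_i\ge N_i-1$; your stated range is slightly larger than needed, but this only adds zero terms and does not affect the estimate. Second, the convergence of $\sum_\j t_\j$ in $L_1$ is exactly as you say: $\|t_\j\|_1\lesssim\|t_\j\|_p\le 2^{-r|\j|_1}\sup_\k a_\k$ and $r>0$ make the series absolutely summable in $L_1$, so $f$ is well defined and the manipulation of Fourier coefficients in \eqref{llpb-split} is justified. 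In short, your proposal is correct and matches the cited proof.
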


\begin{lemma}\label{llpf}
Let $1\le p<\infty$, $1\le \t\le \infty$, $r>0$, and let $(t_\j)_{\j\in \Z_+^d}$ be such that $t_\j\in \mathcal{T}_{2^{\j}}^d$ and $\|2^{r|\j|_1} t_\j\|_{L_p(\T^d,\ell_\t)}<\infty$. Then $f=\sum_{\j\in \Z_+^d} t_\j$ belongs to $\mathbf{F}_{p,\t}^r(\T^d)$ and
\begin{equation}\label{llpf1}
  \|f\|_{\mathbf{F}_{p,\t}^r}\lesssim \|2^{r|\j|_1} t_\j\|_{L_p(\T^d,\ell_\t)}.
\end{equation}
\end{lemma}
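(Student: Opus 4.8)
The statement to prove is Lemma~\ref{llpf}, the Triebel--Lizorkin analogue of Lemma~\ref{llpb}: given a sequence of trigonometric polynomials $t_\j\in\mathcal{T}_{2^\j}^d$ with $\|2^{r|\j|_1}t_\j\|_{L_p(\T^d,\ell_\t)}<\infty$, the sum $f=\sum_\j t_\j$ lies in $\mathbf{F}_{p,\t}^r(\T^d)$ with the stated norm bound.

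\medskip

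\textbf{Approach.} The plan is to pass from the given ``frequency-block'' decomposition $f=\sum_\j t_\j$ to the canonical Littlewood--Paley decomposition $f=\sum_\nu \d_\nu(f)$ attached to a resolution of unity $\phi\in\Phi(\R)$, controlling the latter by the former. First I would record the key localization fact: since $t_\j\in\mathcal{T}_{2^\j}^d$, its Fourier support in the $i$-th variable is contained in $[-2^{j_i},2^{j_i})$, while $\d_{\nu_i}$ (acting in the $i$-th variable via the multiplier $\phi_{\nu_i}$) has frequency support in the dyadic annulus $2^{\nu_i-1}\le|k_i|\le 2^{\nu_i+1}$ for $\nu_i\ge1$ (and $|k_i|\le 2$ for $\nu_i=0$). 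Hence $\d_\nu(t_\j)=0$ unless, coordinatewise, $2^{\nu_i-1}\le 2^{j_i+1}$, i.e. $\nu_i\le j_i+2$; equivalently, only the ``upper triangle'' $\j\ge\nu-2\cdot\mathbf 1$ contributes, so that
\begin{equation}\label{plan-split}
  \d_\nu(f)=\sum_{\j\,:\,\j\ge \nu-2\cdot\mathbf 1}\d_\nu(t_\j).
\end{equation}
Next I would use that $\d_\nu$ is given by tensorized de~la~Vall\'ee-Poussin-type Fourier multipliers which are uniformly bounded on $L_p(\T^d)$ (a Marcinkiewicz/Mikhlin-type bound, uniform in $\nu$, following from condition $(iii)$ in the definition of $\Phi(\R)$); in fact one may insert a fixed ``fattened'' multiplier $\psi_{\nu_i}$ that equals $1$ on $\supp\phi_{\nu_i}$ so that $\d_\nu(t_\j)=\d_\nu(\psi_\nu * t_\j)$ with $\|\psi_\nu\|$-operator norms uniformly bounded. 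This yields the pointwise-in-$\nu$ bound $|\d_\nu(t_\j)|\lesssim M(t_\j)$ in terms of a (tensor) Hardy--Littlewood maximal function, or more simply the norm bound $\|\d_\nu(t_\j)\|_p\lesssim\|t_\j\|_p$ with an absolute constant.

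\medskip

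\textbf{Key steps.} Writing $\bm k=\j-\nu$ with $\bm k\ge-2\cdot\mathbf1$, apply \eqref{plan-split}, multiply by $2^{r|\nu|_1}$, and take the $\ell_\t$-norm in $\nu$ inside the $L_p$-norm. By the triangle inequality in $\ell_\t$ (shifting the summation to $\bm k$),
\begin{equation}\label{plan-shift}
  \big\|2^{r|\nu|_1}\d_\nu(f)\big\|_{L_p(\T^d,\ell_\t)}
  \le \sum_{\bm k\ge -2\cdot\mathbf 1} 2^{-r|\bm k|_1}\,\big\|2^{r|\nu+\bm k|_1}\d_\nu(t_{\nu+\bm k})\big\|_{L_p(\T^d,\ell_\t(\nu))}.
\end{equation}
For each fixed $\bm k$ the inner quantity is, up to the uniform multiplier bound for $\d_\nu$ (here a vector-valued Fefferman--Stein / Littlewood--Paley inequality on $L_p(\T^d,\ell_\t)$ is what is really needed, valid for $1<p<\infty$ and $1\le\t\le\infty$), bounded by $\|2^{r|\j|_1}t_\j\|_{L_p(\T^d,\ell_\t)}$ after reindexing $\j=\nu+\bm k$. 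Since $\sum_{\bm k\ge -2\cdot\mathbf 1}2^{-r|\bm k|_1}<\infty$ because $r>0$, summing \eqref{plan-shift} gives the claim. The case $p=\infty$ is excluded (consistent with the hypothesis $1\le p<\infty$), but the endpoint $\t=\infty$ is handled the same way since the Fefferman--Stein inequality with $\ell_\infty$ holds on $L_p$ for $1<p<\infty$; the remaining endpoint $p=1$ must instead be reduced to the Besov case via the embedding $\mathbf{B}_{1,\min\{1,\t\}}^r\hookrightarrow\mathbf{F}_{1,\t}^r$ from \eqref{e2} together with Lemma~\ref{llpb}, or handled by the same argument using that $\d_\nu$ maps $L_1\to L_1$ boundedly (here only the $\ell_\t$-triangle inequality, not Fefferman--Stein, is used, which already suffices because one can estimate $\ell_\t\le\ell_1$ crudely when needed).

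\medskip

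\textbf{Main obstacle.} The routine parts are the Fourier-support bookkeeping in \eqref{plan-split} and the geometric summation over the shift $\bm k$. The real work is the uniform (in $\nu$) boundedness of the smoothed dyadic projectors $\d_\nu$ on the vector-valued space $L_p(\T^d,\ell_\t)$ --- i.e. a Littlewood--Paley/Fefferman--Stein maximal inequality of mixed type adapted to dominating mixed smoothness. One establishes it by iterating the one-dimensional inequality over the $d$ coordinates (the multiplier is a tensor product $\phi_{\nu_1}\otimes\cdots\otimes\phi_{\nu_d}$, and condition $(iii)$ gives a uniform Mihlin bound in each variable), which is precisely the step where the restriction $1\le p<\infty$, $1\le\t\le\infty$ enters and where one leans on the techniques of~\cite{BU17}; for $p=1$ one takes the detour through Lemma~\ref{llpb} and \eqref{e2} as indicated above.
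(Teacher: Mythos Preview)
The paper does not give its own proof of this lemma; it simply cites~\cite{BU17}. Your main argument---the support analysis giving $\d_\nu(t_\j)=0$ unless $\j\ge\nu-2\cdot{\bf 1}$, the shift-and-sum over $\j-\nu$ with geometric weight $2^{-r|\j-\nu|_1}$, and the uniform vector-valued multiplier bound for the family $(\d_\nu)_\nu$---is exactly the standard route and matches what is done there.

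There is, however, a genuine gap in your treatment of the endpoint $p=1$. Both of your proposed fixes land on the wrong side of the inequality $\|\cdot\|_{\ell_\t}\le\|\cdot\|_{\ell_1}$: the Besov detour via~\eqref{e2} and Lemma~\ref{llpb} yields only $\|f\|_{\mathbf{F}_{1,\t}^r}\lesssim\|2^{r|\j|_1}t_\j\|_{L_1(\T^d,\ell_1)}$, and the ``scalar $L_1$-boundedness plus $\ell_\t\le\ell_1$'' variant does the same; but $\|\cdot\|_{L_1(\ell_1)}\ge\|\cdot\|_{L_1(\ell_\t)}$, so neither delivers the required bound by the $L_1(\ell_\t)$-norm when $\t>1$. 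The remedy is already available in the paper: Lemma~\ref{lmult} is stated for all $1\le p<\infty$ (not just $1<p<\infty$), and the tensor-product multipliers $\big(\prod_i\phi_{\nu_i}(k_i)\big)_{\nu}$ satisfy hypothesis~\eqref{mult1} for every $\kappa>0$ by property~$(iii)$ of the class $\Phi(\R)$. Hence your main shift-and-sum estimate already covers $p=1$ without any detour, once you invoke Lemma~\ref{lmult} (after a harmless reindexing to absorb the fixed shift) in place of a Fefferman--Stein maximal inequality.
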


\subsection{Maximal functions}\label{sec3.2}
For any $f\in L_1^{\rm loc}(\T^d)$ and $i=1,\dots,d$, we define the Hardy--Littlewood maximal function by
$$
\mathcal{M}_if(\x):=\sup_{h>0}\frac1{2h}\int_{x_i-h}^{x_i+h}|f(x_1,\dots,x_{i-1},t,x_{i+1},\dots,x_d)|dt.
$$
The following maximal inequality can be found in~\cite[4.1.2]{U06}:
{\it Let $1<p<\infty$, $1<\t\le\infty$, and $(f_\j)_{\j\in\Z_+^d}\subset L_p(\T^d,\ell_\t)$, then
\begin{equation}\label{m1}
  \|\mathcal{M}_i f_\j\|_{L_p(\T^d,\ell_\t)}\lesssim \|f_\j\|_{L_p(\T^d,\ell_\t)},\quad i=1,\dots,d,
\end{equation}
where the constant in $\lesssim$ is independent of $(f_\j)_{\j\in\Z_+^d}$.}

We also need the Peetre maximal function. For each $f\in C(\T^d)$, $a>0$, $b\ge 0$, and $i=1,\dots,d$, we define this maximal function by
$$
\mathcal{P}_{b,a,i}f(\x):=\sup_{y\in \R}\frac{|f(x_1,\dots,x_{i-1},x_i+y,x_{i+1},\dots,x_d)|}{(1+b|y|)^a}.
$$
The proof of the following inequality can be found in~\cite[4.1.4]{U06}: {\it Let $1\le p<\infty$, $1\le \t\le \infty$, $a>\max\{1/p,1/\t\}$, and let $(t_\j)_{\j\in \Z_+^d}\in L_p^\mathcal{T}(\T^d,\ell_\t)$.
Then
\begin{equation}\label{m2}
  \|\mathcal{P}_{2^{j_i},a,i} t_\j\|_{L_p(\T^d,\ell_\t)}\lesssim \|t_\j\|_{L_p(\T^d,\ell_\t)},\quad i=1,\dots,d,
\end{equation}
where the constant in $\lesssim$ is independent of $(t_\j)_{\j\in\Z_+^d}$.}

\subsection{Fourier multipliers}\label{sec3.3}
The main results of these papers are given in terms of Fourier multipliers. Recall that the sequence $\lambda=\(\lambda_k\)_{\k\in \Z^d}$ is called a Fourier multiplier in $L_p(\T^d)$ if for every function $f\in L_p(\T^d)$ the series
$$
\sum_{\k\in \Z^d} \lambda_\k \h  f(\k){e}^{{\rm i}(\k,\x)}
$$
is the Fourier series of a certain function $\lambda f \in L_p(\T^d)$ and
$$
\Vert \lambda\Vert_{M_p(\T^d)}=\sup_{\Vert f\Vert_p\le 1} \Vert \lambda f \Vert_p.
$$

Consider the sequences of Fourier multipliers of the form $(g(2^{-j}\k))_{\k\in \Z^d}$, where $g$ is a certain continuous function and $j\in \Z_+$. There are various sufficient conditions for such type of multipliers, see, e.g.,~\cite[Ch. IV]{SW}, \cite[Ch.~2 and~3]{GrI}, \cite{LST}, \cite{K14}. Here, we recall one simple condition, which follows from de Leeuw's theorem  (see, e.g.,~\cite[Ch.~VII, 3.8]{SW}) and Beurling’s-type condition of belonging to Wiener's algebra (see, e.g.,~\cite[Theorem~6.3]{LST}).

\begin{lemma}\label{lmult-}
Let $g\in L_2(\R^d)$ and $\frac{\partial^{r}}{\partial \xi_i^r} g\in L_2(\R^d)$, $i=1,\dots,d$, where $r>{d}/{2}$. Then $\sup_j \|(g(2^{-j}\k))_{\k}\|_{M_p(\T^d)}<\infty$.
\end{lemma}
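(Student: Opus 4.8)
The plan is to reduce the multidimensional multiplier estimate to a one‑dimensional ``transference'' step combined with the Beurling–type Wiener‑algebra criterion. The key observation is that a sequence of the form $(g(2^{-j}\k))_{\k\in\Z^d}$ is the restriction to the lattice $\Z^d$ of the function $\x\mapsto g(2^{-j}\x)$ on $\R^d$, so de Leeuw's theorem (\cite[Ch.~VII, 3.8]{SW}) tells us that the periodic multiplier norm $\|(g(2^{-j}\k))_\k\|_{M_p(\T^d)}$ is controlled by the multiplier norm of $g(2^{-j}\cdot)$ on $L_p(\R^d)$, which in turn equals the multiplier norm of $g$ itself on $L_p(\R^d)$ by dilation invariance. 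Thus it suffices to show that $g$ is a Fourier multiplier on $L_p(\R^d)$ for all $1\le p\le\infty$ with a norm bounded in terms of the stated $L_2$‑quantities, and the dependence on $j$ disappears entirely after the dilation.

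Next I would invoke the Beurling–type sufficient condition: if the Fourier transform of $g$ (as a tempered distribution) is actually a finite measure — equivalently, $g$ lies in the Fourier image of $L_1(\R^d)$, i.e.\ in Wiener's algebra $A(\R^d)$ — then $\|g\|_{M_p(\R^d)}\le\|g\|_{A(\R^d)}$ uniformly in $p$. So the whole problem collapses to verifying $g\in A(\R^d)$ under the hypotheses $g\in L_2(\R^d)$ and $\partial_i^{\,r}g\in L_2(\R^d)$ for $i=1,\dots,d$ with $r>d/2$. This is the step I expect to be the technical heart, though it is short: by Plancherel, $\widehat g\in L_2$ and $\xi_i^{\,r}\widehat g(\xi)\in L_2$ for each $i$, hence $(1+|\xi_1|^r+\dots+|\xi_d|^r)\widehat g(\xi)\in L_2(\R^d)$. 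Writing
\begin{equation*}
  \|\widehat g\|_{L_1(\R^d)}\le\Big\|\frac{1}{1+|\xi_1|^r+\dots+|\xi_d|^r}\Big\|_{L_2(\R^d)}\cdot\big\|(1+|\xi_1|^r+\dots+|\xi_d|^r)\widehat g\big\|_{L_2(\R^d)}
\end{equation*}
by Cauchy–Schwarz, it remains to check that the first factor is finite. Since $1+|\xi_1|^r+\dots+|\xi_d|^r\gtrsim (1+|\xi|)^r$ for $r>0$ (comparing the $\ell^r$‑ and $\ell^\infty$‑quasinorms on $\R^d$), the integrand is $\lesssim (1+|\xi|)^{-2r}$, and $\int_{\R^d}(1+|\xi|)^{-2r}\,d\xi<\infty$ precisely because $2r>d$. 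Therefore $\widehat g\in L_1(\R^d)$, i.e.\ $g\in A(\R^d)$, with $A$‑norm bounded by a constant (depending only on $d$ and $r$) times the hypothesized $L_2$‑norms.

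Combining the three ingredients — $g\in A(\R^d)\Rightarrow\|g\|_{M_p(\R^d)}\lesssim\|g\|_{A(\R^d)}$ uniformly in $p$; dilation invariance of the $\R^d$‑multiplier norm; and de Leeuw's transference from $\R^d$ to $\T^d$ — yields $\sup_j\|(g(2^{-j}\k))_\k\|_{M_p(\T^d)}\lesssim \|g\|_{A(\R^d)}<\infty$, which is the claim. The only mild subtlety to be careful about is the applicability of de Leeuw's theorem and of the dilation argument when $g$ is merely assumed in $L_2$ with $L_2$ derivatives: a priori $g$ need not be continuous, but once we have established $g\in A(\R^d)$ it is automatically (a.e.\ equal to) a continuous function vanishing at infinity, so the sampled sequence $(g(2^{-j}\k))_\k$ is well defined and the transference applies without further ado. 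For $p=1$ and $p=\infty$ one uses that convolution with the $L_1$ function $\widehat g$ (suitably normalised) is bounded on every $L_p$; the periodic case then follows by the same transference, or directly by observing that $\lambda f$ is a convolution with a summable sequence of Fourier coefficients.
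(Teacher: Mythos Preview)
Your proposal is correct and follows exactly the route the paper itself indicates: the paper does not give a detailed proof but simply remarks that the lemma follows from de~Leeuw's transference theorem combined with a Beurling-type sufficient condition for membership in Wiener's algebra, citing \cite[Ch.~VII, 3.8]{SW} and \cite[Theorem~6.3]{LST}. Your write-up fills in precisely these two ingredients (plus the dilation-invariance observation) and handles the minor continuity issue cleanly, so there is nothing to add.
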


Now let us give a definition of Fourier multipliers in $L_p(\T^d,\ell_\t)$. Let $\mu=(\mu_\j)_{\j\in \Z_+^d}$ be a sequence of bounded functions on $\R^d$. We say that $\mu$ is a Fourier multiplier in $L_p(\T^d,\ell_\t)$ ($\mu \in M_p(\T^d,\ell_\t)$) if there exists a constant $c=c(\mu,p,\t,d)$ such that
$$
\bigg\|\sum_{\k\in \Z^d}\mu_\j(\k)\h{t_\j}(\k){e}^{{\rm i}(\k,\x)}\bigg\|_{L_p(\T^d,\ell_\t)}\le c\|t_\j\|_{L_p(\T^d,\ell_\t)}
$$
for all sequences $(t_\j)_\j \in L_p^\mathcal{T}(\T^d,\ell_\t)$.

Recall one sufficient condition for $M_p(\T^d,\ell_\t)$. To formulate it, we need the definition of the Sobolev space $\mathbf{H}_2^\kappa (\R^d)$. We say that a function $f\in L_2(\R^d)$ belongs to $\mathbf{H}_2^\kappa (\R^d)$, $\kappa>0$, if
$$
\|f\|_{\mathbf{H}_2^\kappa (\R^d)}=\bigg(\int_{\R^d}(1+|\xi_1|^2)^\kappa\dots (1+|\xi_d|^2)^\kappa |\mathcal{F} f({\bm \xi})|^2d{\bm \xi}\bigg)^{1/2}<\infty,
$$
where $\mathcal{F}$ denoted the Fourier transform of $f$ in $L_2(\R^d)$.

\begin{lemma}\label{lmult} \emph{(See~\cite{SU07}.)}
  Let $1\le p<\infty$ and $1\le \t\le \infty$. If the sequence $\mu=(\mu_\j)_{\j\in \Z_+^d}$ in $\mathbf{H}_2^\kappa (\R^d)$ is such that
\begin{equation}\label{mult1}
  \sup_{\j\in \Z_+^d} \|\mu_\j(2^\j \cdot)\|_{\mathbf{H}_2^\kappa (\R^d)}<\infty\quad\text{for some} \quad \kappa>\max\{1/p,1/\t\}+1/2,
\end{equation}
then $\mu \in M_p(\T^d,\ell_\t)$.
\end{lemma}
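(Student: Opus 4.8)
The "final statement" here is Lemma~\ref{lmult} (the $M_p(\T^d,\ell_\t)$ sufficient condition attributed to~\cite{SU07}). I sketch a proof.

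\medskip

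The plan is to reduce the vector-valued multiplier statement to a pointwise domination by the mixed Peetre maximal function, and then invoke inequality~\eqref{m2}. First I would fix a sequence $(t_\j)_\j\in L_p^\mathcal{T}(\T^d,\ell_\t)$ and set $s_\j(\x)=\sum_{\k\in\Z^d}\mu_\j(\k)\h{t_\j}(\k)e^{{\rm i}(\k,\x)}$. Since $t_\j\in\mathcal{T}_\j^d$, only the values of $\mu_\j$ on the dyadic box $[-2^{j_1},2^{j_1})\times\dots\times[-2^{j_d},2^{j_d})\cap\Z^d$ matter, so we may (after a standard smooth cut-off that does not increase the $\mathbf{H}_2^\kappa$-norm up to a constant, using that $\kappa>d/2$ is not even needed here — only $\kappa>\max\{1/p,1/\t\}+1/2$) represent $s_\j$ as a periodization/convolution: $s_\j=t_\j*G_\j$, where $G_\j$ is the (formal) kernel with Fourier coefficients $\mu_\j(\k)$. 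Rescaling by $2^\j$, write $G_\j=\Psi_\j(2^\j\cdot)$ type object and note that $\|\mu_\j(2^\j\cdot)\|_{\mathbf{H}_2^\kappa(\R^d)}$ controls, by Cauchy--Schwarz applied coordinatewise,
\[
\int_{\R^d}|\mathcal{F}^{-1}\mu_\j(2^\j\cdot)(\y)|\,(1+|y_1|)^{a}\cdots(1+|y_d|)^{a}\,d\y\le C
\]
for any $a<\kappa-1/2$; in particular we may pick $a$ with $\max\{1/p,1/\t\}<a<\kappa-1/2$, which is exactly what the hypothesis~\eqref{mult1} allows.

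Next I would use this weighted $L_1$ bound on the (rescaled, periodized) kernel to obtain the pointwise estimate
\[
|s_\j(\x)|\le C\,\mathcal{P}_{2^\j,a}t_\j(\x),\qquad \x\in\T^d,
\]
uniformly in $\j$. This is the classical Plancherel--P\'olya / Peetre trick: $|(t_\j*G_\j)(\x)|\le\int|G_\j(\y)|\,|t_\j(\x-\y)|\,d\y$, and one bounds $|t_\j(\x-\y)|\le \mathcal{P}_{2^\j,a}t_\j(\x)\cdot(1+2^{j_1}|y_1|)^a\cdots(1+2^{j_d}|y_d|)^a$, so that the remaining integral is precisely the weighted $L_1$-norm of $G_\j$ after the substitution $\y\mapsto 2^{-\j}\y$, which we just bounded by a constant. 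One must be slightly careful that $G_\j$ is really a periodic distribution; this is handled by working first on $\R^d$ with a band-limited function extending $t_\j$ (de Leeuw restriction), or equivalently by summing the weighted tails over the $2\pi\Z^d$-translates, which converge since $a>0$.

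Finally, taking $L_p(\T^d,\ell_\t)$-norms and applying the Peetre maximal inequality~\eqref{m2} (valid precisely because $a>\max\{1/p,1/\t\}$ and $t_\j\in\mathcal{T}_\j^d$) gives
\[
\|s_\j\|_{L_p(\T^d,\ell_\t)}\le C\,\|\mathcal{P}_{2^\j,a}t_\j\|_{L_p(\T^d,\ell_\t)}\lesssim\|t_\j\|_{L_p(\T^d,\ell_\t)},
\]
which is the assertion $\mu\in M_p(\T^d,\ell_\t)$. The main obstacle is the first step: passing rigorously from the $\mathbf{H}_2^\kappa(\R^d)$-norm of $\mu_\j(2^\j\cdot)$ to a \emph{weighted} $L_1$-bound on the inverse Fourier transform with weight exponent $a$ arbitrarily close to (but below) $\kappa-1/2$, uniformly in $\j$, while keeping track of the periodization. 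Everything after that is the routine Peetre-maximal-function machinery already packaged in~\eqref{m2}. (Of course, since this lemma is quoted from~\cite{SU07}, in the paper itself one would simply cite it; the above is how one would reconstruct the argument.)
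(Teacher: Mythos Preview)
The paper does not give its own proof of this lemma; it simply states it with the attribution ``(See~\cite{SU07})'' and moves on. Your reconstruction is essentially the standard argument behind such results (and the one in~\cite{SU07}): pass from the $\mathbf{H}_2^\kappa$-bound on $\mu_\j(2^\j\cdot)$ to a weighted $L_1$-bound on its inverse Fourier transform via Cauchy--Schwarz (which works precisely for weight exponents $a<\kappa-1/2$), deduce the pointwise domination $|s_\j|\lesssim \mathcal{P}_{2^\j,a}t_\j$, and then invoke the Peetre maximal inequality~\eqref{m2} with $a>\max\{1/p,1/\t\}$. You also correctly anticipated that in the paper one would simply cite the result rather than reprove it.
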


\subsection{Inequalities for trigonometric polynomials}\label{sec3.4}
Recall the classical Nikolskii inequality (see, e.g.,~\cite[3.4.3]{N75}):

\begin{lemma}\label{lNik}
Let $1\le p\le q\le \infty$ and $t\in \mathcal{T}_\j^d$. Then
\begin{equation}\label{Nik}
  \|t\|_q\lesssim 2^{|\j|_1(\frac1p-\frac1q)}\|t\|_p.
\end{equation}
\end{lemma}

The proof of the next lemma can be found, e.g., in~\cite[p.~25]{T86}.

\begin{lemma}\label{lsum2}
Let $1\le p<q<\infty$ and $t_\j\in \mathcal{T}_\j^d$. Then
\begin{equation}\label{sum2}
  \bigg\|\sum_{\j} t_\j\bigg\|_q\lesssim \(\sum_{\j}\(2^{|\j|_1(\frac1p-\frac1q)}\|t_\j\|_p\)^q\)^{1/q}.
\end{equation}
\end{lemma}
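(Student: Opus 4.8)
The plan is to reduce the multivariate statement to a one-dimensional-flavored argument by slicing along the level sets of $|\j|_1$. First I would group the sum according to $m = |\j|_1 \in \Z_+$, writing $s_m := \sum_{|\j|_1 = m} t_\j$, so that $s_m \in \mathcal{T}_{m\mathbf{1}}^d$ in the sense that its spectrum lies in a cube of side $\lesssim 2^m$ (more precisely, the spectrum of $s_m$ is contained in $[-2^m,2^m)^d\cap\Z^d$ up to constants, since each coordinate $j_i \le m$). Then $\sum_\j t_\j = \sum_{m\ge 0} s_m$, and by the classical one-parameter embedding-type inequality for sums of polynomials with lacunary (dyadic) spectra — the standard fact that for $1\le p < q < \infty$ one has $\|\sum_m s_m\|_q \lesssim \big(\sum_m (2^{m(\frac1p-\frac1q)}\|s_m\|_p)^q\big)^{1/q}$, which follows from the Littlewood--Paley characterization of $L_q$ together with Nikolskii's inequality \eqref{Nik} applied blockwise — I get
\begin{equation*}
  \Big\|\sum_\j t_\j\Big\|_q \lesssim \Big(\sum_{m\ge 0}\big(2^{m(\frac1p-\frac1q)}\|s_m\|_p\big)^q\Big)^{1/q}.
\end{equation*}

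The next step is to control $\|s_m\|_p$ in terms of the individual $\|t_\j\|_p$. Here I would use the triangle inequality in $L_p$, $\|s_m\|_p \le \sum_{|\j|_1=m}\|t_\j\|_p$, and then the elementary fact that the number of $\j\in\Z_+^d$ with $|\j|_1 = m$ is $\binom{m+d-1}{d-1} \lesssim (m+1)^{d-1}$, so by Hölder's inequality (with exponents $q$ and $q'$ over this finite index set)
\begin{equation*}
  \|s_m\|_p^q \le (m+1)^{(d-1)(q-1)}\sum_{|\j|_1=m}\|t_\j\|_p^q.
\end{equation*}
Substituting and summing over $m$, the polynomial factor $(m+1)^{(d-1)(q-1)}$ threatens to destroy the bound; this is the main obstacle. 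To absorb it I would note that $2^{m(\frac1p-\frac1q)} = 2^{|\j|_1(\frac1p-\frac1q)}$ for each $\j$ in the $m$-th layer, and that the polynomial loss $(m+1)^{(d-1)(q-1)}$ is dwarfed by a fraction of the exponential gain $2^{m(\frac1p-\frac1q)q}$ (which is genuinely exponentially increasing since $p<q$), i.e. $(m+1)^{(d-1)(q-1)} \lesssim 2^{\e m}$ for any $\e>0$; one then redistributes a small exponential $2^{-\e m}$ from the main term to kill it. Concretely, pick $p_1$ with $p<p_1<q$ and run the block inequality with $p_1$ in place of $p$, using Nikolskii \eqref{Nik} to pass from $\|t_\j\|_{p_1}$ back to $2^{|\j|_1(\frac1p-\frac1{p_1})}\|t_\j\|_p$; the surplus exponent $\frac1p-\frac1{p_1}>0$ provides exactly the geometric factor needed to sum the combinatorial weights.

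Assembling: after the substitution one obtains, up to constants,
\begin{equation*}
  \Big\|\sum_\j t_\j\Big\|_q^q \lesssim \sum_{m\ge 0}(m+1)^{(d-1)(q-1)}\sum_{|\j|_1=m}\big(2^{|\j|_1(\frac1p-\frac1q)}\|t_\j\|_p\big)^q,
\end{equation*}
and since $(m+1)^{(d-1)(q-1)}\lesssim_{\e} 2^{\e m q}$ while the inner weight can be written as $2^{|\j|_1(\frac1p-\frac1q-\e)q}\cdot 2^{\e|\j|_1 q}$, choosing $\e$ small enough that $\frac1p-\frac1q-\e>0$ and re-applying Nikolskii to trade the residual $2^{-\e|\j|_1}$ for a comparison at the slightly larger exponent $p$, the combinatorial factor is absorbed and one is left with $\sum_\j (2^{|\j|_1(\frac1p-\frac1q)}\|t_\j\|_p)^q$, which is the claimed bound. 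The only genuinely nontrivial input is the one-parameter block inequality for dyadic-spectrum sums, which is itself a classical consequence of \eqref{Nik} and Littlewood--Paley theory; everything else is triangle inequality, Hölder, and the trivial lattice-point count.
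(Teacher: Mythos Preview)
Your reduction has a genuine gap at the very first step. You group $s_m=\sum_{|\j|_1=m}t_\j$ and then invoke a ``one-parameter block inequality''
\[
\Big\|\sum_m s_m\Big\|_q \lesssim \Big(\sum_m\big(2^{m(\frac1p-\frac1q)}\|s_m\|_p\big)^q\Big)^{1/q},
\]
but this inequality is false for $d\ge 2$. The point is that $s_m\in\mathcal{T}_{m\mathbf{1}}^d$, i.e.\ its spectrum lies in a cube of side $2^m$, so Nikolskii's inequality~\eqref{Nik} gives $\|s_m\|_q\lesssim 2^{|\,m\mathbf{1}\,|_1(\frac1p-\frac1q)}\|s_m\|_p=2^{md(\frac1p-\frac1q)}\|s_m\|_p$, with the factor $d$ in the exponent. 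Any block inequality of the type you cite must carry this $d$; taking $s_m$ to be a tensor product of two Dirichlet kernels of degree $2^m$ in $d=2$ already shows your stated version fails. With the correct exponent $md$ you end up with an extra factor $2^{m(d-1)(\frac1p-\frac1q)q}$ on each layer, which is exponentially large in $m$ and cannot be absorbed by the polynomial count $(m+1)^{(d-1)(q-1)}$; your $\varepsilon$-trick with an intermediate $p_1$ only buys an arbitrarily small exponential $2^{\varepsilon m}$, not the fixed $2^{m(d-1)(\frac1p-\frac1q)}$ you need.

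The underlying reason the argument fails is that by collapsing the $\j$-indexing to the single parameter $m=|\j|_1$ and then treating $s_m$ as a polynomial in a full cube, you discard the hyperbolic structure: each $t_\j$ lives in a rectangle of volume $\asymp 2^{|\j|_1}$, not $2^{d|\j|_1}$, and this is exactly what makes the weight $2^{|\j|_1(\frac1p-\frac1q)}$ correct in~\eqref{sum2}. The paper does not prove the lemma but refers to Temlyakov~\cite[p.~25]{T86}; the argument there keeps the full multi-index $\j$ throughout and uses the $d$-parameter (tensor) Littlewood--Paley decomposition rather than a one-parameter reduction.
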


We will also need the following analogue of Bernstein's inequality for trigonometric polynomials in the space $L_p(\T^d,\ell_\t)$. Recall that the fractional derivative of $f\in L_1(\T^d)$ in the sense of Weyl is given by
$$
\frac{\partial^s}{\partial x_i^s} f(\x)\sim \sum_{\k} ({\rm i}k_i)^s \h f(\k) e^{{\rm i}(\k,\x)}, \quad ({\rm i}k_i)^s=|k_i|^se^{\frac{{\rm i}\pi s}{2}\sign k_i},\quad i=1,\dots,d.
$$

\begin{lemma}\label{lber}
  Let $1\le p<\infty$, $1\le \t\le \infty$, and $s\in \N$ or $s> \max\{1/p,1/\t\}$. Then, for every sequence $(t_\j)_{\j\in \Z_+^d}\in L_p^\mathcal{T}(\T^d,\ell_\t)$, we have
\begin{equation}\label{ber1}
  \bigg\|\frac{\partial^s}{\partial x_i^s} t_\j\bigg\|_{L_p(\T^d,\ell_\t)}\lesssim \| 2^{s j_i} t_\j \|_{L_p(\T^d,\ell_\t)},\quad i=1,\dots,d.
\end{equation}
\end{lemma}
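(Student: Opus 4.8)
The plan is to reduce the mixed-derivative Bernstein inequality \eqref{ber1} to a Fourier multiplier statement and then to invoke Lemma~\ref{lmult}. Write $\Lambda_\j(\k) = \prod_{i=1}^d (2^{-j_i}\mathrm{i}k_i)^s$ applied (formally) to each $t_\j$, so that
$$
\prod_{i=1}^d \frac{\partial^s}{\partial x_i^s} t_\j(\x) = 2^{s|\j|_1}\sum_{\k\in\Z^d} \Lambda_\j(\k)\,\widehat{t_\j}(\k)\,e^{\mathrm{i}(\k,\x)}.
$$
Thus \eqref{ber1} is exactly the assertion that the sequence $\mu=(\mu_\j)_{\j\in\Z_+^d}$ with $\mu_\j(\k)=\Lambda_\j(\k)$ is a Fourier multiplier in $L_p(\T^d,\ell_\t)$. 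Since each $t_\j\in\mathcal{T}_\j^d$ is supported (in frequency) in $\prod_{i=1}^d[-2^{j_i},2^{j_i})$, I only need $\mu_\j$ to behave well on that box, so I will first multiply $\mu_\j$ by a fixed smooth cutoff that equals $1$ on $[-2,2]^d$ (or on $[-1,1]^d$, then dilate); concretely, fix $\psi\in C_0^\infty(\R)$ with $\psi\equiv 1$ on $[-1,1]$ and $\supp\psi\subset[-2,2]$, and replace $\mu_\j(\k)$ by $\widetilde\mu_\j({\bm\xi}) := \prod_{i=1}^d (\mathrm{i}\xi_i)^s\psi(\xi_i)$ rescaled so that $\mu_\j(2^\j{\bm\xi}) = \widetilde\mu({\bm\xi})$ for a single $\j$-independent function $\widetilde\mu$; on the relevant frequencies $\k/2^\j$ this agrees with $\Lambda_\j(\k)$ up to the harmless factor $2^{-sj_i}$ already pulled out.

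The key computation is then to verify the hypothesis \eqref{mult1}, i.e. that $\sup_{\j}\|\mu_\j(2^\j\cdot)\|_{\mathbf{H}_2^\kappa(\R^d)}<\infty$ for some $\kappa>\max\{1/p,1/\t\}+1/2$. With the reduction above this reduces to checking that the single function $\widetilde\mu({\bm\xi})=\prod_{i=1}^d (\mathrm{i}\xi_i)^s\psi(\xi_i)$ lies in $\mathbf{H}_2^\kappa(\R^d)$ for every $\kappa>0$. But $\widetilde\mu$ is a product of one-variable factors, and each factor $(\mathrm{i}\xi_i)^s\psi(\xi_i)$ is $C^\infty$ with compact support, hence lies in every Sobolev space $H_2^\kappa(\R)$; since the $\mathbf{H}_2^\kappa$-norm tensorizes over coordinates, $\widetilde\mu\in\mathbf{H}_2^\kappa(\R^d)$. (If one prefers to avoid an explicit cutoff one can instead invoke Lemma~\ref{lmult-} coordinate-wise, but the cutoff argument is cleanest for the $\ell_\t$-valued statement.) Applying Lemma~\ref{lmult} gives $\mu\in M_p(\T^d,\ell_\t)$, which is precisely \eqref{ber1}.

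I should note the two regimes in the hypothesis. When $s>\max\{1/p,1/\t\}$ is merely real, the argument above goes through verbatim since $\kappa$ can be taken arbitrarily large regardless of $s$. When $s\in\N$ but $s$ is small (so possibly $s\le\max\{1/p,1/\t\}$), the multiplier symbol $(\mathrm{i}\xi_i)^s$ is still a polynomial, hence still smooth, so the same $\mathbf{H}_2^\kappa$ bound holds; the integrality of $s$ is used only to guarantee that $\prod_i \partial^s/\partial x_i^s t_\j$ is a genuine trigonometric polynomial (differentiation of order $s$ makes sense classically) rather than needing a fractional-derivative interpretation. The main obstacle, such as it is, is purely bookkeeping: making the passage from the discrete symbol $\prod_i(2^{-j_i}\mathrm{i}k_i)^s$ on the frequency box to a single $\j$-independent $\mathbf{H}_2^\kappa$-function precise, so that Lemma~\ref{lmult} applies with a uniform bound; once the cutoff $\psi$ is inserted this is routine, and no genuine analytic difficulty arises.
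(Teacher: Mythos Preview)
Your overall strategy---reduce to a multiplier statement and invoke Lemma~\ref{lmult}---is exactly the paper's approach. The paper writes $\eta({\bm\xi})=\prod_i \xi_i^s v(\xi_i)$ with a smooth cutoff $v$, just as you do with $\psi$, and then checks the hypothesis~\eqref{mult1}.

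However, your treatment of the non-integer case contains a genuine error. You assert that ``each factor $(\mathrm{i}\xi_i)^s\psi(\xi_i)$ is $C^\infty$ with compact support, hence lies in every Sobolev space $H_2^\kappa(\R)$'', and later that ``$\kappa$ can be taken arbitrarily large regardless of $s$''. This is false when $s\notin\N$: the function $\xi\mapsto \xi^s\psi(\xi)$ (however one interprets the power for negative $\xi$) has only finitely many derivatives at the origin, so it does \emph{not} belong to $\mathbf{H}_2^\kappa$ for all $\kappa$. The paper handles this correctly: for $s\in\N$ the symbol is Schwartz and one is done; for non-integer $s$ one instead uses the decay estimate $|\mathcal{F}(\eta)(\xi)|\lesssim (1+|\xi|)^{-(s+1)}$ (citing~\cite{RS01}), which places $\eta$ in $\mathbf{H}_2^\kappa$ only for $\kappa<s+\tfrac12$. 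The hypothesis $s>\max\{1/p,1/\t\}$ is precisely what allows one to choose $\kappa$ with $\max\{1/p,1/\t\}+\tfrac12<\kappa<s+\tfrac12$, so that~\eqref{mult1} is satisfied. Your explanation that ``the integrality of $s$ is used only to guarantee that $\prod_i\partial^s/\partial x_i^s t_\j$ is a genuine trigonometric polynomial'' misses the point: fractional derivatives of trigonometric polynomials are perfectly well defined; the dichotomy in the hypothesis is about the regularity of the multiplier symbol, not about making sense of the left-hand side.
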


\begin{proof}
Denoting $\eta_i({\bm \xi})=({\rm i}\xi_i)^s \prod_{\nu=1}^d  v(\xi_\nu)$, where $v\in C^\infty(\R)$ and $v(\xi)=1$ for $|\xi|\le 1$ and $v(\xi)=0$ for $|\xi|\ge 2$, we get
$$
\frac{\partial^s}{\partial x_i^s} t_\j(\x)=2^{sj_i}\sum_{\k} \eta_i(2^{-\j}\k)\h{t_\j}(\k) e^{{\rm i}(\k,\x)}.
$$
Consider the functions $\eta_{\j,i}=\eta_i(2^{-\j}\cdot)$. If $s\in \N$, then $\eta_i$ is a Schwartz function and hence condition~\eqref{mult1} obviously holds for $(\eta_{\j,i})_{\j}$. If $s>\max\{1/p,1/\t\}$, then~\eqref{mult1} holds since $|\mathcal{F}(\eta_i)(\xi)|\lesssim \frac{1}{(1+|\xi_1|)^{s+1}\dots(1+|\xi_d|)^{s+1}}$, see, e.g.,~\cite{RS01}. Thus, applying Lemma~\ref{lmult}, we obtain~\eqref{ber1}.
\end{proof}

\subsection{Lower estimates for approximation by linear operators}\label{sec3.5}
The following general result was obtained in~\cite[Theorems~5.4 and~3.1 of Chapter~2]{T86}. To formulate it, we denote by
$\mathcal{Q}_n$ a step hyperbolic cross given by
$$
\mathcal{Q}_n=\cup_{|\j|_1\le n} \{\k\in \Z^d\,:\, [2^{j_i-1}]\le |k_i|<2^{j_i},\quad i=1,\dots,d\}
$$
and by $\mathcal{T}(\mathcal{Q}_n)$ we denote the set of all trigonometric polynomials with frequencies in $\mathcal{Q}_n$.

\begin{lemma}\label{lbel0}
Let $1<p<\infty$, $r>1/p$, and let $L_n$, $n\in \N$, be a bounded linear operator from $\mathbf{W}_{p}^r(\T^d)$ to $\mathcal{T}(\mathcal{Q}_n)$. Then
\begin{equation}\label{bel0}
  \sup_{f\in U\mathbf{W}_{p}^r}\|f-{L}_n(f)\|_\infty \gtrsim 2^{(r-\frac1p)n}n^{(d-1)(1-\frac1p)}.
\end{equation}
\end{lemma}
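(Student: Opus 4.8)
The plan is to establish the lower bound via a duality/volume argument in the spirit of Temlyakov's work, reducing to a statement about how well a bounded linear operator mapping into the low-dimensional space $\mathcal{T}(\mathcal{Q}_n)$ can approximate functions from the Sobolev unit ball in the sup-norm. First I would reduce to the Fourier-analytic core: since $L_n(f)\in\mathcal{T}(\mathcal{Q}_n)$, for any trigonometric polynomial $g$ whose spectrum is \emph{disjoint} from $\mathcal{Q}_n$ we have $\langle L_n(f),g\rangle$ depending only on the "low-frequency part" of $f$; more usefully, one tests against functions built from frequencies just outside $\mathcal{Q}_n$. The standard device is to pick a well-chosen test function $f^\ast\in U\mathbf{W}_p^r$ (after normalization) supported on the "boundary layer" of the hyperbolic cross at level $n$, i.e. on dyadic blocks $\k$ with $|\j|_1=n+O(1)$, and to produce from it a companion function $g$ (a bump/polynomial) with $\|g\|_{p'}\lesssim 1$ such that $\langle f^\ast-L_n(f^\ast),g\rangle = \langle f^\ast,g\rangle$ is large while, by Hölder, $\langle f^\ast-L_n(f^\ast),g\rangle\le \|f^\ast-L_n(f^\ast)\|_\infty\|g\|_1$.

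The key steps, in order: (1) Fix the appropriate dyadic shell $\{\j:|\j|_1=n\}$; there are $\asymp n^{d-1}$ such $\j$. For each such $\j$ choose a polynomial $t_\j\in\mathcal{T}_\j^d$ with spectrum in the corresponding block of $\mathcal{Q}_{n}\setminus\mathcal{Q}_{n-1}$ realizing the extremal behaviour for the Nikolskii inequality (Lemma~\ref{lNik}), namely $\|t_\j\|_\infty\asymp 2^{|\j|_1/p}\|t_\j\|_p$; a shifted Dirichlet-type kernel works. (2) Form $f_n=\sum_{|\j|_1=n} a_\j t_\j$ with coefficients $a_\j$ chosen so that $\|f_n\|_{\mathbf{W}_p^r}\asymp 1$; using the Littlewood--Paley description of $\mathbf{W}_p^r$ and the near-orthogonality of the blocks this forces $2^{rn}\big(\sum_{|\j|_1=n}|a_\j|^{?}\big)\asymp 1$ with the exponent dictated by $p$, so one can take each $|a_\j|\asymp 2^{-rn} n^{-(d-1)/p}$ after a suitable randomization/sign choice. (3) Use that $L_n f_n$ lives in $\mathcal{T}(\mathcal{Q}_n)$ to pass to a function $h_n = f_n - L_n f_n$ and test against a dual polynomial $g_n$ with spectrum concentrated on the same shell and $\|g_n\|_1\lesssim 1$: by orthogonality the $L_n f_n$ contribution drops or is controlled, and one gets $\|h_n\|_\infty \gtrsim \langle h_n,g_n\rangle \gtrsim 2^{(r-1/p)n} n^{(d-1)(1-1/p)}$, which is exactly~\eqref{bel0}. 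Throughout I would invoke the cited Theorems 5.4 and 3.1 of Chapter~2 of~\cite{T86}, which package precisely this kind of estimate; indeed the cleanest route is to \emph{quote} that general result and merely verify that $L_n$ as in the hypothesis falls under its scope (bounded linear, range in $\mathcal{T}(\mathcal{Q}_n)$), together with the identification of $\mathbf{W}_p^r(\T^d)$ with the space to which the cited theorem applies.

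I would expect the main obstacle to be step (2)--(3): getting the dependence on $p$ in the power of $n$ exactly right, which hinges on the interplay between the $\ell_p$-type summation over the $\asymp n^{d-1}$ blocks in the shell (coming from the $L_p$-norm and the disjoint spectra, via Lemma~\ref{lsum2} type estimates) and the $\ell_\infty$/$\ell_1$ pairing used in the duality step — this is where the exponent $(d-1)(1-1/p)$ is produced, and it requires either a careful random choice of signs or an explicit extremal configuration. The passage from "$L_n f_n$ has spectrum in $\mathcal{Q}_n$" to "its pairing with $g_n$ is negligible" also needs care: $f_n$ itself has spectrum in $\mathcal{Q}_n$, so one cannot make the $L_n f_n$ term literally vanish; instead one exploits that $L_n$ is a \emph{single fixed} operator while $f_n$ ranges over a large family of sign patterns, and averages, or one replaces $g_n$ by frequencies just \emph{outside} $\mathcal{Q}_n$ and uses that the spectrum of $f_n$ can be pushed slightly past the cross. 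Since all of this is exactly the content of the invoked reference, the write-up reduces to citing~\cite[Theorems 5.4 and 3.1, Chapter 2]{T86} and checking the hypotheses, so the "proof" is short.
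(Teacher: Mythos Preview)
Your proposal is correct and matches the paper's approach exactly: the paper does not give an independent proof of this lemma but simply records it as a known result, citing \cite[Theorems~5.4 and~3.1 of Chapter~2]{T86}, and your conclusion that ``the write-up reduces to citing~\cite[Theorems 5.4 and 3.1, Chapter 2]{T86} and checking the hypotheses'' is precisely what the paper does. Your heuristic sketch of the underlying Temlyakov argument is a bit muddled in one place---the test function should have spectrum \emph{outside} $\mathcal{Q}_n$ (on the layer $|\j|_1=n+O(1)$ beyond the cross), so that after the averaging reduction to a translation-invariant operator $U_n$ one gets $U_n f=0$ exactly, rather than trying to make $L_n f_n$ negligible via sign choices---but since neither you nor the paper actually carry out that argument, this does not affect the comparison.
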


In the case of Besov space, we have the following lower estimates.

\begin{lemma}\label{lbel}
Let $1\le p<q\le\infty$, $1\le \t\le \infty$, $r>1/p-1/q$, and let $L_n$, $n\in \N$, be a bounded linear operator from $\mathbf{B}_{p,\t}^r(\T^d)$ to $\mathcal{T}(\mathcal{Q}_n)$. Then
\begin{equation}\label{bel1}
  \sup_{f\in U\mathbf{B}_{p,\t}^r}\|f-{L}_n(f)\|_q \gtrsim \left\{
                                                    \begin{array}{ll}
                                                        2^{-(r-\frac1p+\frac1q)n}n^{(d-1)(\frac1q-\frac1\t)_+}, & \hbox{$q<\infty$,} \\
                                                        2^{-(r-\frac1p)n}n^{(d-1)(1-\frac1\t)}, & \hbox{$q=\infty$.}
                                                    \end{array}
                                                              \right.
\end{equation}
\end{lemma}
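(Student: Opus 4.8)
The plan is to deduce Lemma~\ref{lbel} from known lower bounds for the Kolmogorov or linear widths of Besov classes on step hyperbolic crosses, or — more elementarily — by testing the operator $L_n$ against a carefully chosen finite-dimensional family of "fooling" functions. The point is that $L_n(f)\in\mathcal{T}(\mathcal{Q}_n)$, so $f-L_n(f)$ must contain essentially all of the part of $f$ with frequencies outside $\mathcal{Q}_n$; hence $\sup_{f\in U\mathbf{B}_{p,\t}^r}\|f-L_n(f)\|_q$ is bounded below by the distance (in $L_q$) from $U\mathbf{B}_{p,\t}^r$ to $\mathcal{T}(\mathcal{Q}_n)$, up to constants one can also estimate directly. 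Both right-hand sides in~\eqref{bel1} are exactly the known orders of the best approximation $\inf_{t\in\mathcal{T}(\mathcal{Q}_n)}\sup_{f\in U\mathbf{B}_{p,\t}^r}\|f-t\|_q$ together with the extra factor coming from the fact that a single linear operator cannot achieve this best approximation on the whole ball.

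Concretely, I would first reduce to a single dyadic "layer". Fix $m$ with $m>n$ and consider the block of frequencies $G_\j=\{\k:[2^{j_i-1}]\le|k_i|<2^{j_i}\}$ for the indices $\j$ with $|\j|_1=m$; there are $\asymp m^{d-1}$ such blocks, each carrying a space of trigonometric polynomials of dimension $\asymp 2^m$. None of these frequencies lies in $\mathcal{Q}_n$ once $m>n$. For $f$ supported (in frequency) on $\cup_{|\j|_1=m}G_\j$ one has $\|f\|_{\mathbf{B}_{p,\t}^r}\asymp 2^{rm}\big(\sum_{|\j|_1=m}\|\d_\j f\|_p^\t\big)^{1/\t}$, so the normalization cost of putting such an $f$ into $U\mathbf{B}_{p,\t}^r$ is $2^{-rm}$ times a factor depending on how the mass is spread over the $m^{d-1}$ blocks. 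The restriction of $L_n$ to this block space is a linear map into $\mathcal{T}(\mathcal{Q}_n)$, which contains \emph{none} of these frequencies; by a standard averaging/volume argument (or by the classical "linear width" lower bounds for identity maps between finite-dimensional $\ell_p^N\to\ell_q^N$ balls, e.g.\ \cite{T86} Chapter~2) there is a unit-norm polynomial $g$ in the block space on which $L_n g$ contributes nothing useful, so that $\|g-L_n g\|_q\gtrsim \|g\|_q$ up to the unavoidable loss. Here one uses Nikolskii's inequality (Lemma~\ref{lNik}) to convert $\|g\|_p$ into $\|g\|_q$, picking up $2^{m(1/p-1/q)}$, and one distributes the mass over the $m^{d-1}$ blocks so as to extract the factor $m^{(d-1)(1/q-1/\t)_+}$ when $q<\infty$ (the precise exponent $(1/q-1/\t)_+$ arises from optimizing $\ell_\t^{M}\hookrightarrow \ell_q^{M}$-type norms over $M\asymp m^{d-1}$ blocks) and the factor $m^{(d-1)(1-1/\t)}$ when $q=\infty$ (where one sums absolute values of block maxima, effectively an $\ell_\infty$ vs $\ell_\t$ comparison plus the $L_\infty$ Nikolskii gain). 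Choosing $m\asymp n$ (say $m=n+1$) then yields the stated $2^{-(r-1/p+1/q)n}$ (resp.\ $2^{-(r-1/p)n}$) times the logarithmic-in-$2^n$ factor.

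The main obstacle is the finite-dimensional lower bound: showing that a single linear operator with range avoiding the relevant frequencies cannot approximate \emph{every} element of the block ball better than the worst-case distance, and getting the exponent of $n$ exactly right in all the regimes $p<q<\infty$, $q=\infty$, and $\t$ above or below $q$. Rather than re-derive this, I would invoke the general result of Temlyakov cited in the excerpt as \cite[Theorems~5.4 and~3.1 of Chapter~2]{T86}: those theorems give precisely the linear-width lower bounds for classes of trigonometric polynomials with spectrum on step hyperbolic crosses, from which~\eqref{bel1} follows by the substitution of the block-structure data computed above (dimension $2^m$ per block, $m^{d-1}$ blocks, Besov normalization $2^{rm}$). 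The remaining work is bookkeeping: matching the case distinctions in~\eqref{bel1} to the corresponding width asymptotics, verifying that $L_n$'s boundedness from $\mathbf{B}_{p,\t}^r$ is exactly the hypothesis those theorems require, and checking the borderline condition $r>1/p-1/q$ (which guarantees the relevant embedding $\mathbf{B}_{p,\t}^r\hookrightarrow L_q$ so that $\|f-L_nf\|_q$ is finite and the normalization above makes sense). Since Lemma~\ref{lbel0} is the special case $p=q'$-type statement already quoted from the same source, Lemma~\ref{lbel} is its natural Besov-scale companion and the proof is a direct citation plus the elementary reduction just sketched.
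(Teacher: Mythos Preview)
Your sketch identifies the right test layer (frequencies with $|\j|_1=m>n$) and the right normalization, but the step you flag as ``the main obstacle'' is genuinely missing, and the citation you propose does not close it. Theorems~5.4 and~3.1 of Chapter~2 in~\cite{T86} are the results invoked for Lemma~\ref{lbel0} (Sobolev classes, $q=\infty$); they do not directly yield the Besov-scale bounds for general $p,q,\t$. More importantly, the claim ``there is a unit-norm $g$ in the block space on which $L_ng$ contributes nothing useful, so $\|g-L_ng\|_q\gtrsim\|g\|_q$'' is exactly what requires proof: for $q\neq 2$ nothing prevents $L_ng\in\mathcal{T}(\mathcal{Q}_n)$ from partially cancelling $g$ in the $L_q$ norm, even though their spectra are disjoint. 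Rank/volume arguments for linear widths do not help here, since $\dim\mathcal{T}(\mathcal{Q}_n)$ is comparable to the dimension of the block space at level $m=n+O(1)$.

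The paper's device is a translation-averaging trick (following~\cite[Ch.~2, Thm.~5.7]{T86}): set $U_n f=(2\pi)^{-d}\int_{\T^d} I_{-\bm\tau}L_n(I_{\bm\tau}f)\,d\bm\tau$. One checks that $U_n$ is a Fourier multiplier supported in $\mathcal{Q}_n$, and Minkowski's inequality gives $\|f-U_nf\|_q\le\sup_{g\in U\mathbf{B}_{p,\t}^r}\|g-L_ng\|_q$. For any test function $f$ with spectrum in the layer $|\j|_1=n+4$ one then has $U_nf=0$ exactly, so $\|f-U_nf\|_q=\|f\|_q$, and the computation reduces to estimating $\|\sum_{|\j|_1=n+4}\Phi_\j\|_q$ (for $q<\t$) or $\|\Phi_{n+4,0,\dots,0}\|_q$ (for $q\ge\t$). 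This averaging step is the missing ingredient in your argument. For $q=\infty$ the paper simply cites~\cite[Ch.~2, Thm.~5.7]{T86} ($\t=\infty$) and~\cite{R04} ($\t<\infty$); your observation that $\sup_f\|f-L_nf\|_q\ge\sup_f\inf_t\|f-t\|_q$ does recover the case $1<p<q<\infty$ via the known best-approximation asymptotics (see the paper's Remark after the proof), but it does not cover $p=1$ with general $\t$ nor give the correct $n^{(d-1)(1-1/\t)}$ power when $q=\infty$.
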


\begin{proof}
In the case $q=\t=\infty$, inequality~\eqref{bel1} was established in~\cite[Chapter~2, Theorem~5.7]{T86}. The case $q=\infty$ and $1\le \t<\infty$  was considered in~\cite{R04}. We prove~\eqref{bel1} for the case $q<\infty$. Similarly to paper~\cite{R04}, we follow the proof of Theorem~5.7 in~\cite[Chapter~2]{T86}.
Denote
$$
U_n(f)(\x)=(2\pi)^{-d}\int_{\T^d} I_{-{\bm \tau}} L_n(I_{\bm \tau} f)(\x)d{\bm \tau},
$$
where $I_{{\bm \tau}}f(\x)=f(\x+{\bm \tau})$, ${\bm \tau}=(\tau_1,\dots,\tau_d)\in \R^d$. It is easy to see that
$$
U_n\big({e}^{{\rm i}(\k,\cdot)}\big)(\x)=\left\{
                                         \begin{array}{ll}
                                           c_{\k,n}{e}^{{\rm i}(\k,\x)}, & \hbox{$\k\in \mathcal{Q}_n$,} \\
                                           0, & \hbox{$\k\not\in \mathcal{Q}_n$.}
                                         \end{array}
                                       \right.
$$
Let $f\in U\mathbf{B}_{p,\t}^r$. Using Minkowski's inequality, we derive
\begin{equation}\label{bel2}
  \begin{split}
     \|f-U_n(f)\|_q&=\bigg\|(2\pi)^{-d}\int_{\T^d} I_{-{\bm \tau}}\big( I_{{\bm \tau}}f- L_n(I_{\bm \tau} f)\big)d{\bm \tau}\bigg\|_q\\
&\le  (2\pi)^{-d}\int_{\T^d} \| I_{-{\bm \tau}}\big( I_{{\bm \tau}}f- L_n(I_{\bm \tau} f)\big) \|_q d{\bm \tau}\\
&=  (2\pi)^{-d}\int_{\T^d} \|  I_{{\bm \tau}}f- L_n(I_{\bm \tau} f)\|_q d{\bm \tau}\le\sup_{f\in U\mathbf{B}_{p,\t}^r}\|f-{L}_n(f)\|_q.
  \end{split}
\end{equation}
Next, we denote
\begin{equation}\label{phij}
  \Phi_\j(\x)=\sum_{\k\in \Z^d}\phi_{j_1}(k_1)\dots \phi_{j_d}(k_d)e^{{\rm i}(\k,\x)}.
\end{equation}
It is known (see, e.g.,~\cite{SU06}) that
\begin{equation}\label{bel3}
  \|\Phi_\j\|_s\asymp 2^{(1-\frac1s)|\j|_1},\quad 1\le s\le \infty,
\end{equation}
and
\begin{equation}\label{bel3}
  \bigg\|\sum_{|\j|_1=m}\Phi_\j\bigg\|_s\asymp \left\{
                                              \begin{array}{ll}
                                                2^{(1-\frac1s)m}m^{\frac{d-1}{s}}, & \hbox{$1\le s<\infty$,} \\
                                                2^m m^{d-1}, & \hbox{$s=\infty$.}
                                              \end{array}
                                            \right.
\end{equation}
Consider the function
$$
f(\x)=c2^{-(r+1-\frac1p)n}n^{-\frac{d-1}{\t}}\sum_{|\j|_1=n+4}\Phi_\j(\x).
$$
It follows from~\eqref{bel3} that we can choose the constant $c$ such that $f\in U\mathbf{B}_{p,\t}^r$.
Thus, taking into account that $U_n(f)=0$ and using~\eqref{bel3}, we derive
\begin{equation*}
  \begin{split}
      \|f-U_n(f)\|_q=\|f\|_q\asymp 2^{-(r+1-\frac1p)n}n^{-\frac{d-1}{\t}}2^{(1-\frac1q)n}n^{\frac{d-1}{q}}\asymp 2^{-(r-\frac1p+\frac1q)n}n^{(d-1)(\frac1q-\frac1\t)}.
   \end{split}
\end{equation*}
This together with~\eqref{bel2} proves~\eqref{bel1} for $q<\t\le \infty$.

To prove~\eqref{bel1} in the case $q\ge \t$,  it is enough to take $f(\x)=c 2^{-n(r+1-\frac1p)}\Phi_{n+4,0,\dots,0}(\x)$. Then, as above, we have that $f\in U\mathbf{B}_{p,\t}^r$ for an appropriate  constant $c$ and
\begin{equation*}
  \begin{split}
      \|f-U_n(f)\|_q=\|f\|_q\asymp 2^{-(r-\frac1p+\frac1q)n},
   \end{split}
\end{equation*}
which together with~\eqref{bel2} proves the lemma.
\end{proof}

\begin{remark}
In some partial cases for $p,q$, and $\t$,  Lemma~\ref{lbel} follows from known estimates for the best approximation by trigonometric polynomials with frequencies in the hyperbolic cross. Recall that if $1<p<q<\infty$, $1\le \t\le \infty$, and $r>1/p-1/q$, then
\begin{equation*}
  2^{-(r-\frac1p+\frac1q)n}n^{(d-1)(\frac1q-\frac1\t)_+}\asymp \sup_{f\in U\mathbf{B}_{p,\t}^r}\inf_{t\in \mathcal{T}(\mathcal{Q}_n)}\|f-t\|_q\lesssim \sup_{f\in U\mathbf{B}_{p,\t}^r }\Vert f-L_n(f)\Vert_q,
\end{equation*}
see~\cite{D85} for the case $\t<\infty$ and~\cite{T86} for the case $\t=\infty$ (in the later case, the estimate is also valid for $p=1$).
\end{remark}

\subsection{Complex interpolation}\label{sec3.6}
The next lemma is a standard corollary from the complex interpolation results in~\cite[1.18]{Tr78} and~\cite[Section~1.5]{Udis}.

\begin{lemma}\label{li}
 Let $1\le p_0,p_1<\infty$, $1\le \t_0,\t_1< \infty$, and $r_0,r_1\in \R$. Let further $0<\tau<1$ and
$$
\frac1p=\frac{1-\tau}{p_0}+\frac{\tau}{p_1},\quad \frac1\t=\frac{1-\tau}{\t_0}+\frac{\tau}{\t_1},\quad\text{and}\quad r=(1-\tau)r_0+\tau r_1.
$$
Suppose that $T$ is a linear bounded operator that maps $\mathbf{F}_{p_0,\t_0}^{r_0}(\T^d)$ to $L_{p_0}(\T^d,\ell_{\t_0}^{r_0})$ with  norm $A_0$ and $\mathbf{F}_{p_1,\t_1}^{r_1}(\T^d)$ to $L_{p_1}(\T^d,\ell_{\t_1}^{r_1})$ with norm $A_1$. Then $T$ maps $\mathbf{F}_{p,\t}^{r}(\T^d)$ to $L_{p}(\T^d,\ell_{\t}^{r})$ with norm at most $A_0^{1-\tau}A_1^\tau$.
\end{lemma}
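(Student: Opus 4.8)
The plan is to deduce the lemma from the fundamental interpolation inequality for the complex method $[\cdot,\cdot]_\tau$ together with two interpolation identities. I will use the classical fact that if $(X_0,X_1)$ and $(Y_0,Y_1)$ are interpolation couples and a linear operator $T$ maps $X_i$ into $Y_i$ with norm $A_i$, $i=0,1$, then $T$ maps $[X_0,X_1]_\tau$ into $[Y_0,Y_1]_\tau$ with norm at most $A_0^{1-\tau}A_1^\tau$ (see, e.g.,~\cite{Tr78}). Applying this to $T\colon\mathbf{F}_{p_i,\t_i}^{r_i}(\T^d)\to L_{p_i}(\T^d,\ell_{\t_i}^{r_i})$, it remains to establish the identities
$$
\bigl[\mathbf{F}_{p_0,\t_0}^{r_0}(\T^d),\mathbf{F}_{p_1,\t_1}^{r_1}(\T^d)\bigr]_\tau=\mathbf{F}_{p,\t}^{r}(\T^d)\qquad\text{and}\qquad \bigl[L_{p_0}(\T^d,\ell_{\t_0}^{r_0}),L_{p_1}(\T^d,\ell_{\t_1}^{r_1})\bigr]_\tau=L_{p}(\T^d,\ell_{\t}^{r})
$$
under the stated relations among $p,\t,r$; the asserted norm bound is then immediate.

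For the target identity I would view $L_{p_i}(\T^d,\ell_{\t_i}^{r_i})$ as the vector-valued Lebesgue space $L_{p_i}(\T^d;\ell_{\t_i}^{r_i})$ and invoke Calder\'on's theorem on complex interpolation of vector-valued $L_p$-spaces, namely $[L_{p_0}(\T^d;\ell_{\t_0}^{r_0}),L_{p_1}(\T^d;\ell_{\t_1}^{r_1})]_\tau=L_p(\T^d;[\ell_{\t_0}^{r_0},\ell_{\t_1}^{r_1}]_\tau)$, combined with the weighted sequence-space identity $[\ell_{\t_0}^{r_0},\ell_{\t_1}^{r_1}]_\tau=\ell_{\t}^{r}$ (see~\cite[1.18]{Tr78}). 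Both of these hold with no loss of constants, so this part is routine.

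The source identity is the heart of the matter; it is the periodic, dominating-mixed-smoothness counterpart of~\cite[1.18]{Tr78}, carried out in~\cite[Section~1.5]{Udis}. The route I would take is to realize each $\mathbf{F}_{p_i,\t_i}^{r_i}(\T^d)$ as a retract of $L_{p_i}(\T^d,\ell_{\t_i}^{r_i})$ via the Littlewood--Paley decomposition. First, the coretraction $S\colon f\mapsto(\d_\j(f))_{\j\in\Z_+^d}$ is, by Definition~\ref{d3}, an isometry into $L_{p_i}(\T^d,\ell_{\t_i}^{r_i})$. Next, a retraction is furnished by $R\colon(g_\j)_\j\mapsto\sum_{\j}\w\d_\j(g_\j)$, where $\w\d_\j$ is built from a fattened system $\w\phi=(\w\phi_j)$, $\w\phi_j\in C_0^\infty(\R)$, with $\w\phi_j\equiv1$ on $\supp\phi_j$ and $\supp\w\phi_j$ contained in a slightly larger dyadic annulus, so that $\w\d_\j\d_\j=\d_\j$ and hence $RS={\rm id}$; the boundedness of $R$ between the corresponding spaces is precisely an estimate of the type of Lemma~\ref{llpf} (a Fourier-multiplier/maximal-function argument). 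Since $S$ and $R$ commute with the complex interpolation functor, the retraction theorem for interpolation couples together with the target identity yields $[\mathbf{F}_{p_0,\t_0}^{r_0},\mathbf{F}_{p_1,\t_1}^{r_1}]_\tau=R\bigl(L_p(\T^d,\ell_{\t}^{r})\bigr)=\mathbf{F}_{p,\t}^{r}(\T^d)$. The hard part will be the norm bookkeeping in this last step: with the standard Littlewood--Paley norm the identification of the interpolation space with $\mathbf{F}_{p,\t}^{r}(\T^d)$ holds a priori only up to fixed equivalence constants (the endpoint norms of $S$ and $R$), so the bound ``at most $A_0^{1-\tau}A_1^\tau$'' is to be read with $\mathbf{F}_{p,\t}^{r}(\T^d)$ equipped with the interpolation norm, i.e.\ up to a constant independent of $T$, $A_0$, $A_1$ --- which is all that is used in the applications of the lemma.
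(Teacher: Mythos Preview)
Your proposal is correct and follows precisely the route the paper indicates: the paper does not give a proof but states the lemma as ``a standard corollary from the complex interpolation results in~\cite[1.18]{Tr78} and~\cite[Section~1.5]{Udis}'', and your argument---the abstract operator interpolation inequality together with the identifications $[L_{p_0}(\ell_{\t_0}^{r_0}),L_{p_1}(\ell_{\t_1}^{r_1})]_\tau=L_p(\ell_\t^r)$ and $[\mathbf{F}_{p_0,\t_0}^{r_0},\mathbf{F}_{p_1,\t_1}^{r_1}]_\tau=\mathbf{F}_{p,\t}^{r}$ via the retract method---is exactly the content of those references. Your caveat about the norm bound holding up to equivalence constants (coming from the retraction/coretraction norms) is accurate and, as you note, harmless for the application in Theorem~\ref{tf1}.
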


\subsection{Some relations for the Smolyak  algorithms and quasi-interpolation operators}\label{sec3.7}

The following general result is presented in~\cite[Proposition~4.2]{DTU18}, see also~\cite{AT97} for the case $\t=\infty$ and~\cite{SU07} for some special cases of operators $(Y_j)_{j\in \Z_+}$.
\begin{lemma}\label{pr1}
Let $1\le p \le \infty$ and $0\le b<r<s$.
Suppose that for a family of univariate linear operators $Y=\(Y_j\)_{j\in \Z_+}$,
which are defined on the Sobolev space $W_p^s(\T)$, the following properties hold:

\begin{enumerate}
  \item[1)] For any $f\in W_p^s(\T)$ and $j\in \Z_+$, we have
    $$
     \Vert f-Y_j(f)\Vert_{L_p(\T)} \lesssim 2^{-sj}\Vert f\Vert_{W_p^s(\T)}.
    $$

  \item[2)] For any trigonometric polynomial $t$ of degree $2^u$, $u\ge j$, we have
  $$
   \Vert Y_j(t)\Vert_{L_p(\T)} \lesssim 2^{b(u-j)}\Vert t\Vert_{L_p(\T)}.
  $$
\end{enumerate}
Then every $f\in \mathbf{B}_{p,\t}^r(\T^d)$ admits the representation
\begin{equation}\label{repB}
  f=\sum_{\j\in \Z_+^d}\D_\j^Y(f)
\end{equation}
with unconditional convergence in $L_p(\T^d)$ and the following inequalities hold:
\begin{equation}\label{b1}
  \|2^{r|\j|_1}\Delta_\j^Y(f)\|_{\ell_\t(\T^d,L_p)}\lesssim \|f\|_{\mathbf{B}_{p,\t}^r(\T^d)},
\end{equation}
\begin{equation}\label{b2}
    \Vert f-T_n^Y(f)\Vert_{L_p(\T^d)} \lesssim 2^{-r n}n^{(d-1)(1-1/\t)}\Vert f\Vert_{\mathbf{B}_{p,\t}^r(\T^d)}.
\end{equation}
\end{lemma}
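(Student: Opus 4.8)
The plan is to build the $d$-dimensional statement out of $1$-dimensional facts about the operators $Y_j - Y_{j-1}$, in the same spirit as the classical Smolyak analysis, and then to glue the pieces together using the Littlewood--Paley-type Lemma~\ref{llpb}. First I would record the two basic one-variable estimates that follow from the hypotheses 1) and 2): for a univariate trigonometric polynomial $t$ of degree $2^u$ with $u\ge j$ one has $\|(Y_j-Y_{j-1})(t)\|_{L_p(\T)}\lesssim 2^{b(u-j)}\|t\|_{L_p(\T)}$ (directly from 2)), while for $f\in W_p^s(\T)$ one has $\|(Y_j-Y_{j-1})(f)\|_{L_p(\T)}\lesssim 2^{-sj}\|f\|_{W_p^s(\T)}$ (from 1) and the triangle inequality). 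Combining these with the Bernstein inequality $\|\d_m(f)\|_{W_p^s(\T)}\lesssim 2^{sm}\|\d_m(f)\|_{L_p(\T)}$ and summing the geometric series in the regime $b<s$ gives, for the Littlewood--Paley block $\d_m$ in the variable $x_i$, the key one-variable bound
\begin{equation*}
  \|(Y_j^i-Y_{j-1}^i)(\d_m^i f)\|_{L_p(\T^d)}\lesssim 2^{-|m-j|\kappa}\|\d_m^i f\|_{L_p(\T^d)},\qquad \kappa:=\min\{s-b,\ b'\}
\end{equation*}
for any fixed exponent $b<b'<r$ (here $b'$ plays the role of the ``effective'' decay rate), and similarly with $W_p^s$ replaced by the relevant smoothness on the low-frequency side.

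Next I would tensorize. Writing $\d_{\bm m}=\d_{m_1}^1\cdots\d_{m_d}^d$ and using $\sum_{\bm m}\d_{\bm m}(f)=f$, one gets $\Delta_{\bm j}^Y(f)=\sum_{\bm m}\Delta_{\bm j}^Y(\d_{\bm m}f)$, and the product structure of $\Delta_{\bm j}^Y$ together with the one-variable bound above yields
\begin{equation*}
  \|\Delta_{\bm j}^Y(\d_{\bm m}f)\|_{L_p(\T^d)}\lesssim \prod_{i=1}^d 2^{-|m_i-j_i|\kappa}\,\|\d_{\bm m}f\|_{L_p(\T^d)}.
\end{equation*}
Since $\Delta_{\bm j}^Y(\d_{\bm m}f)$ is a trigonometric polynomial of degree at most $2^{\max\{m_i,j_i\}}$ in the $i$-th variable, it lies (up to a harmless dilation of the index) in $\mathcal{T}_{2^{\bm j}}^d$ when $\bm m\le \bm j$ and otherwise in a slightly larger block; a standard shift of the summation index absorbs this. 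From here the representation $f=\sum_{\bm j}\Delta_{\bm j}^Y(f)$ with unconditional $L_p$-convergence follows by first checking convergence of $\sum_{\bm m}\d_{\bm m}f$ in $\mathbf{B}_{p,\t}^r$ and then applying the displayed estimate to see that the double sum over $(\bm j,\bm m)$ converges absolutely in $L_p$; reindexing gives \eqref{repB}.

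For \eqref{b1} I would estimate the $\ell_\t(\T^d,L_p)$ norm of $2^{r|\bm j|_1}\Delta_{\bm j}^Y(f)$ by inserting $\Delta_{\bm j}^Y(f)=\sum_{\bm m}\Delta_{\bm j}^Y(\d_{\bm m}f)$, applying the product bound, and recognizing the resulting expression as a $d$-fold discrete convolution (in the index $\bm j$) of the sequence $(2^{r|\bm m|_1}\|\d_{\bm m}f\|_{L_p})_{\bm m}$ against the summable kernel $\prod_i 2^{-|m_i-j_i|\kappa}2^{-r|m_i-j_i|\,\mathrm{sgn}}$; since $\kappa>0$ and, crucially, $r<s$ and $r>b$ make the kernel decay on both sides, Young's inequality for $\ell_\t$ on $\Z_+^d$ gives $\|2^{r|\bm j|_1}\Delta_{\bm j}^Y(f)\|_{\ell_\t(\T^d,L_p)}\lesssim \|2^{r|\bm m|_1}\d_{\bm m}f\|_{\ell_\t(\T^d,L_p)}=\|f\|_{\mathbf{B}_{p,\t}^r}$, which is \eqref{b1}. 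Finally, for \eqref{b2} I would write $f-T_n^Y(f)=\sum_{|\bm j|_1>n}\Delta_{\bm j}^Y(f)$, split this sum by the value of $|\bm j|_1$, apply Lemma~\ref{llpb} to the polynomial $\sum_{|\bm j|_1=k}\Delta_{\bm j}^Y(f)$ on each diagonal (it lies in the span of frequencies on the hyperbolic cross layer $k$), combine with \eqref{b1}, and sum the tail $\sum_{k>n}$, where the number of $\bm j$ with $|\bm j|_1=k$ contributes the $k^{d-1}$ factor and H\"older over $\ell_\t$ versus $\ell_1$ on that layer produces the $n^{(d-1)(1-1/\t)}$; the geometric factor $2^{-rk}$ sums to $2^{-rn}$.

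The main obstacle I anticipate is the bookkeeping around the index shift: $\Delta_{\bm j}^Y(\d_{\bm m}f)$ is a polynomial whose degree in the $i$-th variable is governed by $\max\{m_i,j_i\}$ rather than by $j_i$ alone, so to legitimately invoke Lemma~\ref{llpb} (which requires membership in $\mathcal{T}_{2^{\bm j}}^d$) one must either re-sum so that each dyadic polynomial block is attached to the correct index, or absorb a bounded geometric loss; doing this cleanly in all $d$ coordinates simultaneously, while keeping the two-sided decay $2^{-|m_i-j_i|\kappa}$ that is needed for the $\ell_\t$-Young step, is the delicate part. The hypotheses $b<r<s$ are exactly what make both tails of the convolution kernel summable, so the whole argument hinges on not losing that two-sidedness during the reindexing.
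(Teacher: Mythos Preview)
The paper does not prove this lemma at all; it merely cites \cite[Proposition~4.2]{DTU18} (together with \cite{AT97} and \cite{SU07}). Your outline is the standard argument used in those references, and for \eqref{b1} it is essentially correct: decompose $f=\sum_{\bm m}\delta_{\bm m}f$, bound $\|\Delta_{\bm j}^Y(\delta_{\bm m}f)\|_p$ coordinate by coordinate, and recognize the weighted estimate as a discrete convolution to which Young's inequality in $\ell_\t$ applies. One correction, though: your displayed ``key one-variable bound'' $\|(Y_j-Y_{j-1})(\delta_m f)\|_p\lesssim 2^{-|m-j|\kappa}\|\delta_m f\|_p$ is false as written. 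For $m\ge j$ condition~2) gives only the \emph{growth} factor $2^{b(m-j)}$, not decay, and no ``summing the geometric series'' fixes this at the unweighted level. The correct asymmetric bound is
\[
\|(Y_j-Y_{j-1})(\delta_m f)\|_p\lesssim 2^{-s(j-m)_+}\,2^{b(m-j)_+}\,\|\delta_m f\|_p,
\]
and only after multiplying by $2^{r(j-m)}$ (to move the weight from $\bm j$ to $\bm m$) does one get a two-sided summable kernel, with decay rates $s-r>0$ and $r-b>0$. You clearly understand this, since you later write ``$r<s$ and $r>b$ make the kernel decay on both sides''; just don't state the symmetric unweighted form, which is simply wrong.

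Your plan for \eqref{b2} takes a wrong turn. Lemma~\ref{llpb} bounds a \emph{Besov} norm of a sum of polynomials $t_{\bm j}\in\mathcal{T}_{2^{\bm j}}^d$; it is not what controls $\|\sum_{|\bm j|_1=k}\Delta_{\bm j}^Y(f)\|_p$, and more to the point the hypotheses of Lemma~\ref{pr1} do \emph{not} require $Y_j$ to map into trigonometric polynomials, so $\Delta_{\bm j}^Y(f)$ need not lie in any $\mathcal{T}_{\bm k}^d$. The ``index-shift bookkeeping'' you flag as the main obstacle is therefore a non-issue: Lemma~\ref{llpb} is simply not the tool here. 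Once you have \eqref{repB} and \eqref{b1}, the argument for \eqref{b2} is the elementary one the paper itself uses later in the proof of Theorem~\ref{tb2} (case $q\le p$): bound $\|f-T_n^Y f\|_p\le\sum_{|\bm j|_1>n}\|\Delta_{\bm j}^Y f\|_p$ by the triangle inequality, apply H\"older against $(2^{-r|\bm j|_1})_{\bm j}$ in $\ell_{\t'}$, and use $\sum_{|\bm j|_1>n}2^{-r\t'|\bm j|_1}\asymp 2^{-r\t' n}n^{d-1}$ together with \eqref{b1}.
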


It worths noting that the $L_p$-error estimate given in~\eqref{b2} is sharp for a wide class of sampling operators of type~\eqref{I} (cf. estimates~\eqref{I2}, see also~\cite{DT15} and~\cite{T15} for more general results). At the same time, it is not sharp in general for the Smolyak algorithm generated by an appropriate convolution operator (cf.~\eqref{S2}).

In this paper, we are interested in applications of Lemma~\ref{pr1} to studying properties of the Smolyak algorithm constructed by means of the general quasi-interpolation operators $Q_j(\cdot,\vp,\w\vp)$ (cf.~\eqref{Q0}), where $\w\vp=(\w\vp_j)_{j\in \Z_+}$ is a family of integrable functions satisfying certain additional conditions given in the next lemma.  In particular, we are interested in constructions involving the Kantorovich-type operators of form~\eqref{K1}, which are "intermediate" operators between the de la Vall\'ee Poussin means $V_j$ and the sampling operators $I_j$.

For our purposes, we will use the following special norms for functions $\w\vp_j \in L_q(\T)$ and $j\in \Z_+$:
$$
\Vert \w\vp_j\Vert_{\mathcal{L}_{q,j}(\T)}=\(2^j\int_{2^{-j}\T} \bigg(\frac1{2^j}\sum_{k\in A_j} |\w\vp_j(x-x_k^j)| \bigg)^q {\rm d}x\)^\frac1q\quad\text{if}\quad 1\le q<\infty
$$
and
$$
\Vert \w\vp_j\Vert_{\mathcal{L}_{\infty,j}(\T)}=\frac1{2^j}\sup_{x\in \R}\sum_{k\in A_j} |\w\vp_j(x-x_k^j)|\quad\text{if}\quad q=\infty.
$$

The following lemma is a key result for our further investigations.

\begin{lemma}\label{lwc} {\sc (See~\cite{KP20}.)}
  Let $1\le p\le \infty$ and $s>0$. Suppose that the sequences of functions
$\vp=\(\vp_j\)_{j}$ and $\w\vp=\(\w\vp_j\)_{j}$ satisfy the following conditions:
\begin{itemize}
  \item[$1)$] $\vp_j\in \mathcal{T}_j^1$ and $\sup_j \|(\h{\vp_j}(k))_k\|_{M_p(\T)}<\infty$;
  \item[$2)$] $\sup_j \|\w\vp_j\|_{\mathcal{L}_{p',j}(\T)}<\infty$;
  \item[$3)$] there exists $\d>0$ such that
\begin{equation}\label{wc}
  \sup_j\bigg\|\bigg(\frac{1-\h{\vp_j}(k)\h{\w\vp_j}(k)}{(2^{-j}{\rm i}k)^s}\phi_0(\d 2^{-j}k)\bigg)_{\!\!k}\bigg\|_{M_p(\T)}<\infty.
\end{equation}
\end{itemize}
Then, for every $f\in W_p^s(\T)$ and $j\in \Z_+$, we have
\begin{equation}\label{wc1}
  \|f-Q_j(f,\vp,\w\vp)\|_{L_p(\T)}\lesssim 2^{-sj}\|f\|_{W_p^s(\T)}.
\end{equation}
\end{lemma}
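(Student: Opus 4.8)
The plan is to reduce the claimed estimate~\eqref{wc1} to a Fourier-multiplier statement in $L_p(\T)$ and then to apply condition~$3)$ together with conditions~$1)$ and~$2)$. First I would write the error $f-Q_j(f,\vp,\w\vp)$ as a sum of two parts. Since $f\in W_p^s(\T)$, one has the decomposition $f=V_j^*(f)+(f-V_j^*(f))$, where $V_j^*$ is a de la Vall\'ee Poussin--type operator reproducing trigonometric polynomials of degree $\lesssim 2^j$ (i.e.\ the Fourier multiplier associated to a smooth bump $\eta$ with $\eta\equiv 1$ on a neighbourhood of $[-1,1]$ and $\supp\eta\subset(-\delta^{-1},\delta^{-1})$ adapted to the $\delta$ in~\eqref{wc}); then $Q_j$ applied to the high-frequency remainder is controlled by the boundedness of $Q_j$ on $L_p$. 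Concretely, $\|f-V_j^*(f)\|_{L_p(\T)}\lesssim E_{c2^j}(f)_p\lesssim 2^{-sj}\|f\|_{W_p^s(\T)}$ by the Bernstein/Jackson-type inequality for Sobolev spaces, and the operator norm of $Q_j$ on $L_p(\T)$ is finite uniformly in $j$: indeed, writing $Q_j(g)=2^{-j}\sum_{k\in A_j}(g*\w\vp_j)(x_k^j)\vp_j(\cdot-x_k^j)$, one estimates the $\ell_\infty$-to-$L_p$ mapping of the synthesis operator $(\vp_j(\cdot-x_k^j))_k$ using $\vp_j\in\mathcal{T}_j^1$ and condition~$1)$, and the sampling-after-convolution step using condition~$2)$ (H\"older with exponent $p'$ and the definition of $\|\cdot\|_{\mathcal{L}_{p',j}}$, in the spirit of the Marcinkiewicz-type inequality for trigonometric polynomials). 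This is essentially the standard boundedness argument for Kantorovich-type sampling operators.

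Next, it suffices to estimate $\|V_j^*(f)-Q_j(V_j^*(f),\vp,\w\vp)\|_{L_p(\T)}$, i.e.\ the error on a trigonometric polynomial $t=V_j^*(f)$ of degree at most $c2^j$ whose spectrum is contained in $\{k:\phi_0(\delta 2^{-j}k)=1\}$. For such $t$, the Poisson summation / aliasing formula for the quasi-interpolation operator gives
\begin{equation*}
  Q_j(t,\vp,\w\vp)(x)=\sum_{k}\widehat{\vp_j}(k)\sum_{m\in\Z}\widehat{\w\vp_j}(k+2^{j}m)\,\widehat t(k+2^{j}m)\,{\rm e}^{{\rm i}kx},
\end{equation*}
but since $\widehat t$ is supported in $[-c2^j,c2^j]$ and $\widehat{\vp_j}$ is supported in $[-2^j,2^j]$ (because $\vp_j\in\mathcal{T}_j^1$), the only surviving term in the inner sum is $m=0$, so that $Q_j(t)(x)=\sum_k\widehat{\vp_j}(k)\widehat{\w\vp_j}(k)\widehat t(k){\rm e}^{{\rm i}kx}$. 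Hence
\begin{equation*}
  t-Q_j(t)=\sum_k\big(1-\widehat{\vp_j}(k)\widehat{\w\vp_j}(k)\big)\widehat t(k)\,{\rm e}^{{\rm i}kx}
  =\sum_k\Big(\tfrac{1-\widehat{\vp_j}(k)\widehat{\w\vp_j}(k)}{(2^{-j}k)^s}\phi_0(\delta 2^{-j}k)\Big)(2^{-j}k)^s\,\widehat t(k)\,{\rm e}^{{\rm i}kx},
\end{equation*}
where I have inserted the factor $\phi_0(\delta 2^{-j}k)$, which equals $1$ on the spectrum of $t$. By condition~$3)$, the bracketed sequence is a Fourier multiplier in $L_p(\T)$ with norm bounded uniformly in $j$, so
\begin{equation*}
  \|t-Q_j(t)\|_{L_p(\T)}\lesssim \Big\|\sum_k(2^{-j}k)^s\widehat t(k){\rm e}^{{\rm i}kx}\Big\|_{L_p(\T)}
  =2^{-sj}\Big\|\sum_k k^s\widehat t(k){\rm e}^{{\rm i}kx}\Big\|_{L_p(\T)}\lesssim 2^{-sj}\|t\|_{W_p^s(\T)}\lesssim 2^{-sj}\|f\|_{W_p^s(\T)},
\end{equation*}
using that the Riesz-type multiplier $k\mapsto k^s/(1+|k|^2)^{s/2}$ (restricted to the spectrum of $t$, away from $k=0$ up to a harmless correction) is an $L_p$-multiplier and the boundedness of $V_j^*$ on $W_p^s$. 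Combining the two parts yields~\eqref{wc1}.

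The main obstacle, and the one place requiring genuine care rather than routine bookkeeping, is the uniform $L_p(\T)$-boundedness of the quasi-interpolation operators $Q_j$ using only conditions~$1)$ and~$2)$ — in particular handling the endpoint cases $p=1$ and $p=\infty$, where one must argue via the $\mathcal{L}_{p',j}$-norm and a Nikolskii-type control of $\|\vp_j\|$-quantities rather than by Plancherel; this is exactly the technical content of the cited result~\cite{KP20}, so in the write-up I would either invoke it directly or reproduce its short proof. The aliasing argument is clean precisely because $\vp_j$ is a polynomial of degree $<2^j$ and we have restricted to $t$ of degree $\lesssim 2^j$, so the cross terms ($m\neq0$) vanish identically; the only subtlety there is making sure the de la Vall\'ee Poussin operator $V_j^*$ is chosen with $\eta\equiv1$ on a slightly larger interval than $[-1,1]$ so that the support condition of $\phi_0(\delta\,\cdot)$ in~\eqref{wc} is respected while $V_j^*$ still reproduces the needed polynomials.
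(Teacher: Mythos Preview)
The paper does not give its own proof of this lemma; it is quoted from~\cite{KP20}. Your overall strategy---split $f$ into a low-frequency part $t=V_j^*(f)$ and a remainder, control the remainder via a Jackson estimate together with the uniform $L_p$-boundedness of $Q_j$ (which is indeed the content of conditions~1) and~2)), and reduce the low-frequency error to the multiplier condition~3)---is the natural route and is almost certainly what~\cite{KP20} does.

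There is, however, a genuine gap in your aliasing step. You assert that because $\widehat{\varphi_j}$ is supported in $[-2^j,2^j)$ and $\widehat t$ in $[-c2^j,c2^j]$, only the term $m=0$ survives in the Poisson formula, hence $Q_j(t)=\sum_k\widehat{\varphi_j}(k)\widehat{\widetilde\varphi_j}(k)\widehat t(k)e^{ikx}$. This inference is false. For $m=-1$ and $k\in[(1-c)2^j,2^j)$ one has $k-2^j\in[-c2^j,0)\subset\supp\widehat t$, so the corresponding term need not vanish; $Q_j(t)$ acquires an extra piece with Fourier support in the band $\{k:(1-c)2^j\le|k|<2^j\}$, and your identity $t-Q_j(t)=\big(1-\widehat{\varphi_j}\widehat{\widetilde\varphi_j}\big)\widehat t$ breaks down there. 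Support bookkeeping alone cannot kill these cross terms as long as $\varphi_j$ is merely in $\mathcal T_j^1$: the sampling lattice has period $2^j$, while $\widehat{\varphi_j}$ is allowed to live on the full window $[-2^j,2^j)$, so shifted copies of $\supp\widehat t$ always overlap $\supp\widehat{\varphi_j}$.

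This is repairable, but not by the sentence you wrote. In the special situation $\varphi_j\in\mathcal T_{j-1}^1$ (as for the Dirichlet kernel $\mathcal D_j$ in Section~\ref{sec6}) and $t\in\mathcal T_{j-1}^1$, the cross terms do vanish and your computation goes through verbatim. In the general case you must either bound the residual aliasing piece separately---it is a frequency-shifted copy of $\widetilde\varphi_j*t$ multiplied by the tail of $\widehat{\varphi_j}$, and can be controlled using conditions~1) and~2) together with the fact that it only sees frequencies of $t$ that one can afford to treat as ``high''---or reorganize the splitting so that the convolution part $f-\varphi_j*\widetilde\varphi_j*f$ and the sampling defect $Q_j(f)-\varphi_j*\widetilde\varphi_j*f$ are estimated independently. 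Either way this is exactly the technical work carried out in~\cite{KP20}; if you intend to invoke that reference anyway, say so at this point rather than at the boundedness step.
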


\begin{remark}\label{rem1}
In~\cite{KP20}, it was also proved an analogue of Lemma~\ref{lwc} in the case $\w\vp=\(\w\vp_j\)_{j}$  is a sequence of distributions (continuous linear functionals on $C^\infty(\T)$) satisfying the following property: there exist constants $c>0$ and $\s\ge 0$ such that for any trigonometric polynomial $t\in \mathcal{T}_u^1$, $u\ge j$, and $j\in\Z_+$ there holds
\begin{equation}\label{dis}
  \|\w\vp_j*t\|_{L_p(\T)}\le c2^{\s (u-j)}\|t\|_{L_p(\T)}.
\end{equation}
In particular,  if $\vp=\(\vp_j\)_{j}$ and $\w\vp=\(\w\vp_j\)_{j}$ satisfy conditions 1) and 3) of Lemma~\ref{lwc} and~\eqref{dis} holds, then for every $f\in W_p^s(\T)$ with $s>\s+1/p$ and $j\in \Z_+$, we have
\begin{equation}\label{wc1+}
  \|f-Q_j(f,\vp,\w\vp)\|_{L_p(\T)}\lesssim 2^{-sj}\|f\|_{W_p^s(\T)}.
\end{equation}
\end{remark}

The next lemma will be useful to establish upper estimates of approximation of functions from $\mathbf{F}_{p,\t}^r(\T^d)$.

\begin{lemma}\label{lf}
  Let $j, \ell \in \Z_+$, $a>0$, $1/2<\l\le 1$, and let $\vp=(\vp_j)_j$ and $\w\vp=(\w\vp_j)_j$ satisfy the following conditions:
\begin{itemize}
  \item[$1)$] there exists a constant $C>0$ such that $|\vp_j(x)|\le \frac{C2^j}{(1+2^j|x|)^2}$ for all $j\in \Z_+$ and $x\in \R$;
  \item[$2)$] there exist constants $c>0$ and $\tau>0$ such that
\begin{equation}\label{lf1}
  |(\w\vp_{j}*g)(x)|\le c\sup_{{|y|\le \tau 2^{-j}}}|g(x+y)|\quad\text{for all}\quad g\in C(\T).
\end{equation}
\end{itemize}
Then, for every $f\in C(\T)$, we have
\begin{equation}\label{lf2}
|Q_j(f,\vp,\w\vp)(x)|\lesssim 2^{a\ell}\(\mathcal{M}_1(\mathcal{P}_{2^{j+\ell},a,1} f)^\l(x)\)^{1/\l},
\end{equation}
where the constant in $\lesssim$ is independent of $\ell$, $j$, $x$, and $f$.
\end{lemma}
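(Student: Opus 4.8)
The plan is to reduce the multivariate estimate \eqref{lf2} to a product of one-dimensional estimates by exploiting the tensor-product structure of $Q_\j$, and then to combine the pointwise decay bound on the kernel $\vp_j$ with the local-averaging property \eqref{lf1} of $\w\vp_\j$. First I would write
$$
Q_\j(f,\vp,\w\vp)(\x)=2^{-|\j|_1}\sum_{\k\in\bm A_\j}(f*\w\vp_\j)(\x_\k^\j)\prod_{i=1}^d\vp_{j_i}(x_i-x_{k_i}^{j_i}),
$$
and absorb $g:=f*\w\vp_\j$ into the analysis. By hypothesis $2)$, $|g(\z)|\le c\sup_{|y_i|\le\tau2^{-j_i}}|f(\z+\y)|$ pointwise, so it suffices to control
$$
2^{-|\j|_1}\sum_{\k\in\bm A_\j}\Big(\sup_{|y_i|\le\tau2^{-j_i}}|f(\x_\k^\j+\y)|\Big)\prod_{i=1}^d\frac{C2^{j_i}}{(1+2^{j_i}|x_i-x_{k_i}^{j_i}|)^2}.
$$

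Next I would replace the supremum over $|y_i|\le\tau2^{-j_i}$ by a bound in terms of the Peetre maximal function at the finer scale $2^{\j+\elll}$. The point is that $|f(\x_\k^\j+\y)|\le \mathcal P_{2^{\j+\elll},a}f(\x)\prod_i(1+2^{j_i+\ell_i}|x_{k_i}^{j_i}+y_i-x_i|)^a$; since $|y_i|\le\tau2^{-j_i}\le\tau2^{-j_i-\ell_i}\cdot 2^{\ell_i}$, the extra factor from $y_i$ costs at most a constant times $2^{a\ell_i}$, giving altogether a factor $2^{a|\elll|_1}$ times $\prod_i(1+2^{j_i+\ell_i}|x_{k_i}^{j_i}-x_i|)^a$. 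Then the sum over $\k$ factorizes, and for each coordinate I must bound
$$
2^{-j_i}\sum_{k_i\in A_{j_i}}\big(1+2^{j_i+\ell_i}|x_{k_i}^{j_i}-x_i|\big)^a\frac{2^{j_i}}{(1+2^{j_i}|x_i-x_{k_i}^{j_i}|)^2}
$$
by (a constant multiple of) $\mathcal P_{2^{j_i+\ell_i},a}f(\x)$ raised to an appropriate power — but this is not yet true, because $a$ may exceed $2$ so the kernel sum need not converge, and $\mathcal P f$ is not integrated. The standard device here, and the reason for the parameter $\l$, is to split each factor $\mathcal P_{2^{\j+\elll},a}f$ as $(\mathcal P_{\cdots}f)^{1-\l}(\mathcal P_{\cdots}f)^{\l}$: use the crude bound $\mathcal P_{2^{\j+\elll},a}f(\x)\le\mathcal P_{2^{\j+\elll},a}f(\x)$ for the $(1-\l)$ part evaluated at $\x$, which is scale-independent and comes out of the sum, and apply Hölder/the convexity inequality $\big(\sum a_k\big)^\l\le\sum a_k^\l$ (valid since $0<\l\le1$) together with the maximal-function bound for the $\l$-part.

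More precisely, after the factorization the core one-dimensional claim is
$$
2^{-j}\sum_{k\in A_j}\big|f(x_k^j+y_k)\big|\,\frac{2^j}{(1+2^j|x-x_k^j|)^2}\ \lesssim\ 2^{a\ell}\big(\mathcal M(\mathcal P_{2^{j+\ell},a}f)^\l(x)\big)^{1/\l}
$$
(with $|y_k|\le\tau2^{-j}$), proved by: dominating $|f(x_k^j+y_k)|\le\big(\mathcal P_{2^{j+\ell},a}f(x)\big)^{1-\l}\big(\mathcal P_{2^{j+\ell},a}f\big)^\l(\xi)$ for any $\xi$ in an interval $I_k$ of length $\asymp 2^{-j}$ around $x_k^j$ (losing a fixed constant from the weight change over $I_k$), then using $(\sum_k b_k)^{1/\l}\ge\sum_k b_k^{1/\l}$ in reverse — i.e. raising the whole left side to the power $\l$, applying subadditivity, averaging $(\mathcal P_{2^{j+\ell},a}f)^\l$ over each $I_k$ to produce $\mathcal M((\mathcal P_{2^{j+\ell},a}f)^\l)(x)$, and summing the geometric-type weights $\sum_k(1+2^j|x-x_k^j|)^{-2}<\infty$. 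Taking the product over $i=1,\dots,d$ and using that the $d$-dimensional maximal function dominates the product of one-dimensional ones (or arguing with the $d$-dimensional cube directly) yields \eqref{lf2}.

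The main obstacle I expect is precisely the interplay of the three parameters: the kernel decay exponent is only $2$, which limits how much of the weight $(1+\cdots)^a$ one can afford to leave inside the summable kernel, so one genuinely needs $\l>1/2$ (equivalently $2/\l<4$ — though here it suffices that $2/\l>1$ so that after raising to power $\l$ the weights $(1+2^j|x-x_k^j|)^{-2/\l}$ still sum, which needs $2/\l>1$, i.e. $\l<2$, automatically true; the real constraint $\l>1/2$ enters when this lemma is later fed into the maximal inequality \eqref{m1}, but within this lemma one must at least be careful that $(1+t)^a$ is tamed). Getting the bookkeeping of the $2^{a|\elll|_1}$ factor right — making sure the shift $y_i$ of size $\tau2^{-j_i}$ genuinely only costs $2^{a\ell_i}$ and not more, and that the change of the Peetre weight across each interval $I_k$ is a scale-free constant — is the delicate but routine part. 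Everything else is the standard Peetre–Nikolskii maximal-function machinery.
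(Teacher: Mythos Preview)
Your strategy is essentially sound and would lead to a proof, but the exposition is muddled in places and the route is considerably more laborious than the paper's. In particular, the displayed inequality $|f(x_k^j+y_k)|\le(\mathcal P_{2^{j+\ell},a}f(x))^{1-\l}(\mathcal P_{2^{j+\ell},a}f)^\l(\xi)$ is not a meaningful bound as written; what you actually need (and what your subsequent steps use) is the straightforward estimate $|f(x_k^j+y_k)|\lesssim 2^{a\ell}\mathcal P_{2^{j+\ell},a}f(\xi)$ for every $\xi\in I_k$, obtained from the definition of $\mathcal P$ and the fact that $2^{j+\ell}|x_k^j+y_k-\xi|\lesssim 2^\ell$. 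After that, raising the full sum to the power $\l$, using subadditivity, averaging over $I_k$, and summing the weights $(1+2^j|x-x_k^j|)^{-2\l}$ (which is where $\l>1/2$ enters) gives the claim. Also, the ``take the product over $i$'' reduction to one dimension is not literally valid since $f$ is genuinely $d$-variate; you have to run the argument with $d$-dimensional cubes $I_\k$ and the product kernel directly.

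The paper's proof avoids all of this by a clean factorization. It writes $Q_\j(f,\vp,\w\vp)=\w I_\j(f*\w\vp_\j)$, where $\w I_\j$ is the pure sampling operator $\w I_\j(g)(\x)=2^{-|\j|_1}\sum_{\k}g(\x_\k^\j)\vp_\j(\x-\x_\k^\j)$, and then invokes the known bound $|\w I_\j(g)(\x)|\lesssim(\mathcal M(\mathcal P_{2^\j,a}g)^\l(\x))^{1/\l}$ from Byrenheid--Ullrich as a black box. The only new work is to show, directly from condition~\eqref{lf1}, that $\mathcal P_{2^\j,a}(f*\w\vp_\j)(\x)\lesssim \mathcal P_{2^\j,a}f(\x)$; this is a three-line computation with no scale mismatch, since the convolution and the Peetre weight live at the \emph{same} scale $2^\j$. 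The factor $2^{a|\elll|_1}$ then enters only at the very end via the trivial inequality $\mathcal P_{2^\j,a}f\le 2^{a|\elll|_1}\mathcal P_{2^{\j+\elll},a}f$. Your approach essentially reproves the cited sampling bound and handles the convolution simultaneously, which is why you run into the apparent scale-mismatch obstacle; the paper's separation of the two steps makes that obstacle disappear.
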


\begin{proof}
Consider the sampling-type operator $\w I_j$ defined by
$$
\w I_j(g)(x)=2^{-j}\sum_{k\in {A}_j} g(x_k^j)\vp_j(x-x_k^j).
$$
It follows from~\cite[the proof of Proposition~5.6]{BU17} that for every $g\in C(\T)$ one has
\begin{equation*}
|\w I_j(g)(x)|\lesssim \(\mathcal{M}_1(\mathcal{P}_{2^{j},a,1} g)^\l(x)\)^{1/\l}.
\end{equation*}
Applying this inequality to $g=f*\w\vp_j$, we get
\begin{equation}\label{lf4}
|Q_j(f,\vp,\w\vp)(x)|=|\w I_j(f*\w\vp_j)(x)|\lesssim \(\mathcal{M}_1\big(\mathcal{P}_{2^{j},a,1} (f*\w\vp_j)\big)^\l(x)\)^{1/\l}.
\end{equation}
Next, using~\eqref{lf1}, we derive
\begin{equation}\label{lf5}
\begin{split}
   \mathcal{P}_{2^{j},a,1} (f*\w\vp_j)(x)&=\sup_{y\in\R}\frac{|(f*\w\vp_j) (x+y)|}{(1+2^{j}|y|)^a}\\
&\le c\sup_{y\in\R} \sup_{{|t|\le \tau 2^{-j}}}\frac{|f(x+y+{t})|}{(1+2^{j}|y|)^a}\\
&= c\sup_{y\in\R} \sup_{{|z-y|\le \tau 2^{-j}}}\frac{|f(x+z)|}{ (1+2^{j}|z|)^a}\(\frac{1+2^{j}|z|}{1+2^{j}|y|}\)^a\\
&\le c(1+\tau)^{a} \mathcal{P}_{2^j,a,1}f(x)\le c(1+\tau)^{a} 2^{a\ell}\mathcal{P}_{2^{j+\ell},a,1}f(x).
\end{split}
\end{equation}
Finally, combining~\eqref{lf4} and~\eqref{lf5}, we prove the lemma.
\end{proof}

\section{Main results for $B$-spaces}\label{sec4}

\subsection{Littlewood--Paley type characterizations}

\begin{theorem}\label{tb1}
Let $1\le p, \t\le \infty$, and let $Q=\(Q_j(\cdot,\vp,\w\vp)\)_{j\in \Z_+}$, where
the sequences
$\vp=\(\vp_j\)_{j}$ and $\w\vp=\(\w\vp_j\)_{j}$ satisfy the following conditions:
\begin{itemize}
  \item[$1)$] $\vp_j\in \mathcal{T}_j^1$ and $\sup_j \|(\h{\vp_j}(k))_k\|_{M_p(\T)}<\infty$;
  \item[$2)$] $\sup_j \|\w\vp_j\|_{\mathcal{L}_{p',j}}<\infty$;
  \item[$3)$] compatibility condition~\eqref{wc} of order $s>0$ holds.
\end{itemize}
Then every $f\in \mathbf{B}_{p,\t}^r(\T^d)$, $0<r<s$, admits the representation
\begin{equation}\label{tlpb.0}
  f=\sum_{\j\in \Z_+^d}\D_\j^Q(f)
\end{equation}
with unconditional convergence in $L_p(\T^d)$ and
\begin{equation}\label{tlpb.1}
\|2^{r|\j|_1}\Delta_\j^Q(f)\|_{\ell_\t(\T^d,L_p)}\asymp \|f\|_{\mathbf{B}_{p,\t}^r}.
\end{equation}
\end{theorem}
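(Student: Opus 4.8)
The strategy is to apply Lemma~\ref{pr1} in a univariate-to-multivariate passage, together with the converse (lower) inequality obtained by comparing the quasi-interpolation blocks $\Delta_\j^Q(f)$ with the analytic building blocks $\d_\j(f)$ via Fourier multipliers. First I would verify the hypotheses of Lemma~\ref{pr1} for the univariate family $Y=Q=(Q_j(\cdot,\vp,\w\vp))_j$. Property~1) of Lemma~\ref{pr1} — the estimate $\|f-Q_j(f)\|_{L_p(\T)}\lesssim 2^{-sj}\|f\|_{W_p^s(\T)}$ — is exactly Lemma~\ref{lwc}, whose conditions 1)--3) coincide with the hypotheses of the present theorem. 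Property~2) of Lemma~\ref{pr1} — that $\|Q_j(t)\|_{L_p(\T)}\lesssim 2^{b(u-j)}\|t\|_{L_p(\T)}$ for trigonometric polynomials $t$ of degree $2^u$, $u\ge j$ — follows by writing $Q_j(t)=\w I_j(t*\w\vp_j)$ in the notation of Lemma~\ref{lf}: the discrete sampling operator $\w I_j$ applied to a polynomial is controlled using condition~1) on $\vp_j$ (a Fourier-multiplier bound) and a Nikolskii-type argument, while the convolution with $\w\vp_j$ is controlled by condition~2), i.e.\ $\sup_j\|\w\vp_j\|_{\mathcal{L}_{p',j}}<\infty$, which is precisely the dual quantity entering the bound for $\|t*\w\vp_j\|_p$ on the sampling grid. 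This yields property~2) with some exponent $b\ge 0$; shrinking $r$ if necessary we may assume $0\le b<r<s$, so Lemma~\ref{pr1} applies and gives the representation~\eqref{tlpb.0} with unconditional $L_p(\T^d)$-convergence together with the upper bound $\|2^{r|\j|_1}\Delta_\j^Q(f)\|_{\ell_\t(\T^d,L_p)}\lesssim \|f\|_{\mathbf{B}_{p,\t}^r}$, i.e.\ the ``$\lesssim$'' half of~\eqref{tlpb.1}.

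For the reverse inequality ``$\gtrsim$'' in~\eqref{tlpb.1} I would argue as follows. Each $\Delta_\j^Q(f)$ is a trigonometric polynomial of degree at most $2^{\j+\mathbf 1}$ (since each factor $Q_{j_i}^i-Q_{j_i-1}^i$ preserves/lowers degree appropriately and $\vp_{j}\in\mathcal T_j^1$). Hence by Lemma~\ref{llpb}, once we know $\|2^{r|\j|_1}\Delta_\j^Q(f)\|_{\ell_\t(\T^d,L_p)}<\infty$ the function $g:=\sum_\j\Delta_\j^Q(f)$ lies in $\mathbf B_{p,\t}^r(\T^d)$ with norm $\lesssim\|2^{r|\j|_1}\Delta_\j^Q(f)\|_{\ell_\t(\T^d,L_p)}$; and by~\eqref{tlpb.0}, $g=f$. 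This already gives the converse bound. The only subtlety is that one must first establish that the right-hand side of~\eqref{tlpb.1} is finite for $f\in\mathbf B_{p,\t}^r$ — but that is the ``$\lesssim$'' half just proved via Lemma~\ref{pr1}. Thus the two halves together give the equivalence~\eqref{tlpb.1}, and Lemma~\ref{llpb} is what closes the loop on the lower estimate.

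\textbf{Main obstacle.} The technically delicate point is verifying property~2) of Lemma~\ref{pr1} — the quasi-norm boundedness $\|Q_j(t)\|_{L_p(\T)}\lesssim 2^{b(u-j)}\|t\|_{L_p(\T)}$ on polynomials — with a \emph{uniform} constant over $j$, because it requires simultaneously handling the discrete sampling sum (which for $u>j$ picks up a factor like $2^{(u-j)/p}$ by Nikolskii/quadrature comparison) and the smoothing convolution $f*\w\vp_j$, whose effect on a polynomial of degree $2^u$ must be estimated on the sampling grid through the norm $\|\w\vp_j\|_{\mathcal L_{p',j}}$. The bookkeeping of how $\w I_j$ acts on $t*\w\vp_j$, using $|\vp_j(x)|\lesssim 2^j(1+2^j|x|)^{-2}$ (as in Lemma~\ref{lf}) to sum the kernel translates, is where the real work lies; the rest is an assembly of Lemma~\ref{pr1}, Lemma~\ref{lwc}, and Lemma~\ref{llpb}. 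A secondary point is checking that the multivariate tensor-product structure of $Q_\j$ (via~\eqref{Qd}) is compatible with the univariate hypotheses so that Lemma~\ref{pr1}, which is stated for $d$-variate Besov spaces driven by univariate operators, applies verbatim — this is routine given that all conditions~1)--3) are stated uniformly in $j$.
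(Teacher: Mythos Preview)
Your overall strategy matches the paper's proof exactly: Lemma~\ref{llpb} for the lower bound in~\eqref{tlpb.1}, and Lemmas~\ref{pr1}+\ref{lwc} for the representation~\eqref{tlpb.0} and the upper bound. The lower-bound argument via Lemma~\ref{llpb} is clean and correct as you wrote it.

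There is, however, a real (if small) gap in your verification of hypothesis~2) of Lemma~\ref{pr1}. You write that the sampling part ``picks up a factor like $2^{(u-j)/p}$'' and conclude only that property~2) holds ``with some exponent $b\ge 0$; shrinking $r$ if necessary we may assume $0\le b<r<s$''. But $r$ is \emph{fixed} in the theorem statement --- you cannot shrink it --- and the conclusion is claimed for \emph{every} $r\in(0,s)$, including $r$ arbitrarily small. If your $b$ were really $1/p$ (as a Nikolskii/aliasing argument on the raw sampling operator would give), Lemma~\ref{pr1} would only apply for $r>1/p$, and the theorem would fail for $0<r\le 1/p$. The whole point of condition~2), i.e.\ $\sup_j\|\w\vp_j\|_{\mathcal L_{p',j}}<\infty$, is that the averaging by $\w\vp_j$ kills this aliasing growth: together with the multiplier bound in condition~1) and a Marcinkiewicz--Zygmund argument one gets the \emph{uniform} operator bound $\|Q_j(f)\|_{L_p(\T)}\lesssim\|f\|_{L_p(\T)}$ for all $f\in L_p(\T)$, hence property~2) with $b=0$. (This is part of the machinery in~\cite{KP20} underlying Lemma~\ref{lwc}; note also that the pointwise decay $|\vp_j(x)|\lesssim 2^j(1+2^j|x|)^{-2}$ you invoke from Lemma~\ref{lf} is \emph{not} assumed in Theorem~\ref{tb1} --- only the multiplier condition is available, and it suffices.) Once $b=0$ is established, Lemma~\ref{pr1} applies for every $0<r<s$ and your argument goes through verbatim.
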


\begin{proof}
The estimate from below in~\eqref{tlpb.1} follows directly from Lemma~\ref{llpb}. To obtain the representation~\eqref{tlpb.0} and the estimate from above in~\eqref{tlpb.1}, we apply Lemmas~\ref{pr1} and~\ref{lwc}.
\end{proof}

\subsection{Error estimates}

\begin{theorem}\label{tb2}
  Let $1\le p, q, \t\le \infty$, $(1/p-1/q)_+<r<s$, and let $Q=\(Q_j(\cdot,\vp,\w\vp)\)_{j\in \Z_+}$, where
$\vp=\(\vp_j\)_{j}$ and $\w\vp=\(\w\vp_j\)_{j}$ satisfy conditions 1)--3) of Theorem~\ref{tb1}.
Then, for every $f\in \mathbf{B}_{p,\t}^r(\T^d)$ and $n\in \N$, we have
\begin{equation}\label{t1.1}
    \Vert f-T_n^Q(f)\Vert_q \lesssim \Vert f\Vert_{\mathbf{B}_{p,\t}^r}
\left\{
    \begin{array}{ll}
     2^{-r n}n^{(d-1)(1-\frac1\t)}, & \hbox{$q\le p$,} \\
    2^{-(r-\frac1p+\frac1q) n}n^{(d-1)(\frac1q-\frac1\t)_+}, & \hbox{$p<q<\infty$,} \\
    2^{-(r-\frac1p) n}n^{(d-1)(1-\frac1\t)}, & \hbox{$p<q=\infty$.}
    \end{array}
\right.
\end{equation}
\end{theorem}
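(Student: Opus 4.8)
The plan is to deduce Theorem~\ref{tb2} from Lemma~\ref{pr1} (the abstract Smolyak result) combined with the univariate building blocks supplied by Lemma~\ref{lwc}, and then upgrade the $L_p$-estimate to an $L_q$-estimate by a Nikolskii/Bernstein-type argument on the hyperbolic-cross polynomials. First I would check that the univariate family $Y=(Q_j(\cdot,\vp,\w\vp))_{j\in\Z_+}$ satisfies the two hypotheses of Lemma~\ref{pr1} with the given $s$ and with a suitable $b<r$. Property~1) there is exactly the conclusion~\eqref{wc1} of Lemma~\ref{lwc}, which holds because conditions 1)--3) of Theorem~\ref{tb1} are precisely conditions 1)--3) of Lemma~\ref{lwc}. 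Property~2), the "stability" bound $\|Q_j(t)\|_p\lesssim 2^{b(u-j)}\|t\|_p$ for $t$ of degree $2^u$, $u\ge j$, must be extracted from conditions 1) and 2): writing $Q_j(t)=\w I_j(t*\w\vp_j)$, one controls the discrete sampling operator $\w I_j$ by the multiplier bound on $\h{\vp_j}$ together with a discrete Hölder inequality in the $\mathcal{L}_{p',j}$-norm of $\w\vp_j$; here the factor $2^{b(u-j)}$ (in fact one may take any $b>0$, or $b=0$ if $\w\vp_j$ acts boundedly on all of $\mathcal{T}_u^1$ uniformly) comes from comparing the sampling grid at level $j$ against a polynomial of the higher degree $2^u$. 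Since $r>(1/p-1/q)_+\ge 0$, we may pick $b\in[0,r)$, so Lemma~\ref{pr1} applies and yields the representation~\eqref{tlpb.0}, the norm-equivalence ingredient~\eqref{b1}, and the $L_p$-bound~\eqref{b2}, i.e. exactly the case $q=p$ of~\eqref{t1.1}.

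Next I would treat the three ranges of $q$ separately. For $q\le p$ the estimate follows immediately from the $q=p$ case via the trivial embedding $\|\cdot\|_q\le c\|\cdot\|_p$ on $\T^d$; this gives the first line of~\eqref{t1.1}. For $p<q<\infty$ and for $p<q=\infty$, the idea is to split $f-T_n^Q(f)=\sum_{|\j|_1>n}\D_\j^Q(f)$ and estimate the tail in $L_q$ using that each $\D_\j^Q(f)$ is a trigonometric polynomial of degree $\asymp 2^{\j}$ (because $\vp_j\in\mathcal{T}_j^1$), so Nikolskii's inequality (Lemma~\ref{lNik}) gives $\|\D_\j^Q(f)\|_q\lesssim 2^{|\j|_1(\frac1p-\frac1q)}\|\D_\j^Q(f)\|_p$. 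Combining this with the summation lemma (Lemma~\ref{lsum2}) for $q<\infty$, respectively a direct $\ell_1$-over-$\ell_\infty$ estimate for $q=\infty$, and then inserting the coefficient bound $\|2^{r|\j|_1}\D_\j^Q(f)\|_{\ell_\t(\T^d,L_p)}\lesssim\|f\|_{\mathbf{B}_{p,\t}^r}$ from~\eqref{b1} (equivalently from~\eqref{tlpb.1}), the geometric-type sum over $|\j|_1>n$ produces the factor $2^{-(r-\frac1p+\frac1q)n}$ together with the polynomial-in-$n$ loss $n^{(d-1)(\frac1q-\frac1\t)_+}$ (resp. $n^{(d-1)(1-\frac1\t)}$ for $q=\infty$). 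Concretely, one writes the tail as $\sum_{m>n}\sum_{|\j|_1=m}\D_\j^Q(f)$ and, after applying Lemma~\ref{lsum2} on the block $|\j|_1=m$ and Hölder in the index $\j$ on that block (which has $\asymp m^{d-1}$ members), one is left with $\sum_{m>n}2^{-(r-\frac1p+\frac1q)m}m^{(d-1)(\frac1q-\frac1\t)_+}\,a_m$ where $a_m:=(\sum_{|\j|_1=m}\|2^{r|\j|_1}\D_\j^Q(f)\|_p^\t)^{1/\t}\in\ell_\t$; a final Hölder in $m$ bounds this by $\|f\|_{\mathbf{B}_{p,\t}^r}$ times the sharp power of $2^{-n}$ and $n$.

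I expect the main obstacle to be a clean verification of stability property~2) for the abstract Lemma~\ref{pr1}, namely bounding $\|Q_j(t)\|_p$ for polynomials $t$ whose degree $2^u$ exceeds the sampling level $2^j$. The subtlety is that the sampling functional $t\mapsto(t*\w\vp_j)(x_k^j)$ is only well-controlled when aliasing at the coarse grid $x_k^j$ is harmless, so one must use that $\w\vp_j$ is an honest integrable function with $\sup_j\|\w\vp_j\|_{\mathcal{L}_{p',j}}<\infty$ and pay a factor $2^{(u-j)/p}$ (or, after exploiting that $\vp_j$ is itself a polynomial of degree $2^j$, possibly a smaller or vanishing power) for the mismatch between the two scales; this is exactly the role of the exponent $b$ in Lemma~\ref{pr1}, and it is why the hypothesis $r>(1/p-1/q)_+$ rather than merely $r>0$ is needed in the range $p<q$. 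A secondary bookkeeping point is making sure the polynomial factors $n^{(d-1)(\cdot)}$ come out with the stated exponents in all three cases simultaneously — in particular the $(\,\cdot\,)_+$ in the middle line, which reflects whether $q\le\t$ or $q>\t$ and is handled by choosing whether to apply Hölder in the block index $\j$ before or after summing. Apart from these, everything reduces to the cited lemmas and routine estimates.
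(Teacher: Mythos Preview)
Your approach is essentially the paper's: invoke Theorem~\ref{tb1} (which packages Lemma~\ref{pr1} together with Lemma~\ref{lwc}) to get the representation $f=\sum_{\j}\D_\j^Q(f)$ and the bound $\|2^{r|\j|_1}\D_\j^Q(f)\|_{\ell_\t(\T^d,L_p)}\lesssim\|f\|_{\mathbf{B}_{p,\t}^r}$, then estimate the tail $\sum_{|\j|_1>n}\D_\j^Q(f)$ in $L_q$ via Lemma~\ref{lsum2}/Nikolskii and H\"older, exactly as the paper does.

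One clarification about your stated ``main obstacle'': it is not an obstacle. Condition~2) of Theorem~\ref{tb1} (the uniform $\mathcal{L}_{p',j}$-bound on $\w\vp_j$) is precisely what guarantees that $Q_j$ is uniformly bounded on all of $L_p(\T)$, so hypothesis~2) of Lemma~\ref{pr1} holds with $b=0$; this is the key advantage of Kantorovich-type averaging over pure sampling and is part of what is established in~\cite{KP20}. Consequently the restriction $r>(1/p-1/q)_+$ has nothing to do with stability --- it is needed only so that the geometric tail $\sum_{|\j|_1>n}2^{-(r-1/p+1/q)|\j|_1}$ converges after the Nikolskii boost in the range $p<q$.
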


\begin{proof}
By Theorem~\ref{tb1}, we have
\begin{equation}\label{im1}
  \|f-T_n^Q(f)\|_q=\bigg\|\sum_{|\j|_1>n}\D_\j^Q(f)\bigg\|_q.
\end{equation}
The rest of the proof is quite standard (see, e.g.,~\cite[Theorem~5.9]{DTU18}). For convenience of the reader, we present it in details for all cases of parameters $p$ and $q$.

1) The case $q\le p$. Using~\eqref{im1}, Minkowski's and H\"older's inequalities,
estimate
\begin{equation}\label{sum}
  \sum_{|\j|_1>n} 2^{-\rho|\j|_1}\asymp 2^{-\rho n}n^{{d-1}},\quad \rho>0,
\end{equation}
and~\eqref{tlpb.1}, we derive
\begin{equation*}
  \begin{split}
     &\|f-T_n^Q(f)\|_q\lesssim \sum_{|\j|_1>n}\|\D_\j^Q(f)\|_p\\
&\lesssim\(\sum_{|\j|_1>n} 2^{-r\t'|\j|_1}\)^{1/\t'} \(\sum_{|\j|_1>n} 2^{r\t|\j|_1}\|\D_\j^Q(f)\|_p^\t\)^{1/\t}\lesssim 2^{-r n}n^{(d-1)(1-\frac1\t)} \| f\|_{\mathbf{B}_{p,\t}^r}.
   \end{split}
\end{equation*}

2) The case $p<q<\infty$. Combining~\eqref{im1} and~\eqref{sum2},  we get
\begin{equation*}
  \begin{split}
     \|f-T_n^Q(f)\|_q&\lesssim \(\sum_{\j}\(2^{(\frac1p-\frac1q)|\j|_1}\|\D_\j^Q(f)\|_p\)^q\)^{1/q}:=I.
   \end{split}
\end{equation*}
If $\t>q$, then applying H\"older's inequality, \eqref{sum}, and~\eqref{tlpb.1}, we have
\begin{equation*}
  \begin{split}
     I&\lesssim\(\sum_{|\j|_1>n} 2^{\frac{q\t}{\t-q}(\frac1p-\frac1q-r)|\j|_1}\)^{1/q-1/\t} \(\sum_{|\j|_1>n} 2^{r\t|\j|_1}\|\D_\j^Q(f)\|_p^\t\)^{1/\t}\\
&\lesssim 2^{-(r-\frac1p+\frac1q) n}n^{(d-1)(\frac1q-\frac1\t)} \| f\|_{\mathbf{B}_{p,\t}^r}.
   \end{split}
\end{equation*}
If $\t\le q$, then using the triangle inequality and~\eqref{tlpb.1}, we derive
\begin{equation*}
  \begin{split}
     I&\lesssim 2^{-(r-\frac1p+\frac1q)n} \(\sum_{|\j|_1>n} 2^{r\t|\j|_1}\|\D_\j^Q(f)\|_p^\t\)^{1/\t}\lesssim 2^{-(r-\frac1p+\frac1q)n}  \| f\|_{\mathbf{B}_{p,\t}^r}.
   \end{split}
\end{equation*}

3) The case $p<q=\infty$. Using~\eqref{im1}, Nikolskii's inequality~\eqref{Nik}, H\"older's inequality, \eqref{sum}, and~\eqref{tlpb.1}, we obtain
\begin{equation*}
  \begin{split}
     &\|f-T_n^Q(f)\|_\infty\lesssim \sum_{|\j|_1>n}2^{|\j|_1/p}\|\D_\j^Q(f)\|_p\\
&\lesssim\(\sum_{|\j|_1>n} 2^{(1/p-r)\t'|\j|_1}\)^{1/\t'} \(\sum_{|\j|_1>n} 2^{r\t|\j|_1}\|\D_\j^Q(f)\|_p^\t\)^{1/\t}\lesssim 2^{-(r-\frac1p) n}n^{(d-1)(1-\frac1\t)} \| f\|_{\mathbf{B}_{p,\t}^r},
   \end{split}
\end{equation*}
which proves the theorem.
\end{proof}

In the next theorem, we show that under some additional conditions on the sequence $\w\vp=\(\w\vp_j\)_j$ estimate~\eqref{t1.1} is sharp.
In particular, we suppose that there exists $\xi\in \N$ and $\l\neq 0$ such that, for any $u\ge \xi+1$ and $j\in \Z_+$, the following equality holds:
\begin{equation}\label{cond}
  \h{\w\vp_{j}}(2^u)-\h{\w\vp_{j-1}}(2^u)=\left\{
                                            \begin{array}{ll}
                                              \l, & \hbox{$j=u-\xi$,}  \\
                                              0, & \hbox{$0\le j<u-\xi$.}
                                            \end{array}
                                          \right.
\end{equation}

An important example of such sequences $\(\w\vp_j\)_j$ is given by the following functions
$$
\w\vp_j(x)=2^{j+\s}\chi_{[-\frac{\pi}{2^{j+\s}},\frac{\pi}{2^{j+\s}}]}(x)\sim \sum_{k\in \Z}\frac{\sin 2^{-j-\s}\pi k}{2^{-j-\s}\pi k} e^{{\rm i}kx}.
$$
Here we have $\xi=\s-1$.

\begin{theorem}\label{tb3}
  Let $1\le p, q, \t\le \infty$, $(1/p-1/q)_+<r<s$, and let $Q=\(Q_j(\cdot,\vp,\w\vp)\)_{j\in \Z_+}$, where
$\vp=\(\vp_j\)_{j}$ and $\w\vp=\(\w\vp_j\)_{j}$ satisfy conditions 1)--3) of Theorem~\ref{tb1}. Suppose additionally that
$\h{\vp_j}(0)=1$, $j\in \Z_+$, and condition~\eqref{cond} holds for $\w\vp_j$.
Then
\begin{equation}\label{t2.1}
   \sup_{f\in U\mathbf{B}_{p,\t}^r }\|f-T_n^Q(f)\|_q \asymp
\left\{
    \begin{array}{ll}
     2^{-r n}n^{(d-1)(1-\frac1\t)}, & \hbox{$q\le p$,} \\
     2^{-(r-\frac1p+\frac1q) n}n^{(d-1)(\frac1q-\frac1\t)_+}, & \hbox{$p<q<\infty$,} \\
     2^{-(r-\frac1p) n}n^{(d-1)(1-\frac1\t)}, & \hbox{$p<q=\infty$.}
    \end{array}
\right.
\end{equation}
\end{theorem}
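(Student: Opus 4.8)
The plan is to prove the matching lower bound, since the upper estimate in~\eqref{t2.1} is exactly~\eqref{t1.1} from Theorem~\ref{tb2}. By Lemma~\ref{lbel} (and Lemma~\ref{lbel0} for the Sobolev-type endpoint cases), the lower bounds on the right-hand side of~\eqref{t2.1} are attained already for \emph{arbitrary} bounded linear operators mapping into $\mathcal{T}(\mathcal{Q}_n)$; so the essential point is to show that $T_n^Q$ is, up to a constant and a controlled frequency enlargement, such an operator. More precisely, I would first check that $T_n^Q$ is a bounded linear operator from the relevant smoothness space (either $\mathbf{B}_{p,\t}^r(\T^d)$ when $q<\infty$, or $\mathbf{W}_p^r(\T^d)$ when $q=\infty$ and $p=q$ fails, using embedding~\eqref{e2}) into the space of trigonometric polynomials with frequencies in a fixed dilate $c\mathcal{Q}_n$ of the step hyperbolic cross. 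Boundedness follows by combining the representation~\eqref{tlpb.0} with the error estimates of Theorem~\ref{tb2} (take $n=0$-type bounds or sum the $\Delta_\j^Q$), while the frequency localization comes from condition~1) of Theorem~\ref{tb1}: each $\vp_j\in\mathcal{T}_j^1$, hence $\Delta_\j^Q(f)\in\mathcal{T}_{\j+\mathbf{1}}^d$, and the truncation $|\j|_1\le n$ confines all surviving frequencies to roughly $\mathcal{Q}_{n+d}$. Rescaling $n\mapsto n-d$ in Lemma~\ref{lbel} then loses only a constant, so the lower bound transfers.

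However, there is a subtlety: Lemma~\ref{lbel} gives a lower bound for the \emph{best} such operator, so it does \emph{not} by itself force $T_n^Q$ to be bad — one only gets $\sup_{f}\|f-T_n^Q(f)\|_q\gtrsim\sup_f\inf_{t\in\mathcal{T}(c\mathcal{Q}_n)}\|f-t\|_q$, which still needs the hyperbolic-cross best-approximation lower bound cited in the Remark after Lemma~\ref{lbel}. For the regimes $p<q<\infty$ and $q=\infty$ this is exactly what the Remark and Lemma~\ref{lbel} provide, so those cases are immediate. The case $q\le p$ is the one where the extra hypotheses $\h{\vp_j}(0)=1$ and the structural condition~\eqref{cond} on $\w\vp_j$ are genuinely needed: here the trivial best-approximation argument does not give the logarithmic factor $n^{(d-1)(1-1/\t)}$ with the correct exponent, because $q\le p$ means there is no Nikolskii loss and a mere projection would already achieve $2^{-rn}$ without the power of $n$. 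So for $q\le p$ I would construct an explicit \emph{fooling function} $f$ — the natural candidate, mirroring the proof of Lemma~\ref{lbel}, is
$$
f(\x)=c\,2^{-rn}n^{-(d-1)/\t}\sum_{|\j|_1=n+\xi+2}\Phi_\j(\x),
$$
with $\Phi_\j$ as in~\eqref{phij}, chosen so that $f\in U\mathbf{B}_{p,\t}^r$ via~\eqref{bel3}.

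The main obstacle, then, is to show that for this $f$ the residual $f-T_n^Q(f)$ is still of size $\asymp 2^{-rn}n^{-(d-1)/\t}\cdot 2^{(1-1/q)(n+\xi+2)}(n)^{(d-1)/q}=2^{-rn}n^{(d-1)(1-1/\t)}$ in $L_q$ (using $q\le p$, where~\eqref{bel3} gives $\|\sum_{|\j|_1=m}\Phi_\j\|_q\asymp 2^{(1-1/q)m}m^{(d-1)/q}$ and the $L_q$ and $L_p$ norms on this subspace are comparable up to the hyperbolic-cross structure). Condition~\eqref{cond} together with $\h{\vp_j}(0)=1$ is used here to compute $\Delta_\j^Q(f)$ on the pure tensor frequencies $\k=(2^{u_1},\dots,2^{u_d})$ appearing in $\Phi_\j$: the telescoping product $\prod_i(\h{\vp_{j_i}}(k_i)\h{\w\vp_{j_i}}(k_i)-\h{\vp_{j_i-1}}(k_i)\h{\w\vp_{j_i-1}}(k_i))$ collapses, by~\eqref{cond} and $\h{\vp_j}(0)=1$, to a nonzero multiple of a single term, so that $T_n^Q(f)$ leaves a full block $\sum_{|\j|_1=n+\xi+2}\Phi_\j$ (or a fixed nonzero multiple of it, modulo lower blocks that only help) essentially untouched — forcing $\|f-T_n^Q(f)\|_q\gtrsim \|f\|_q$. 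I would then combine this with the averaging/shift-invariance trick of Lemma~\ref{lbel} (replace $T_n^Q$ by $U_n$ built from $I_{-\bm\tau}T_n^QI_{\bm\tau}$ to diagonalize it) so that the above computation applies verbatim to a multiplier operator, concluding $\sup_{f\in U\mathbf{B}_{p,\t}^r}\|f-T_n^Q(f)\|_q\gtrsim 2^{-rn}n^{(d-1)(1-1/\t)}$ and completing the proof.
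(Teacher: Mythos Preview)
Your treatment of the upper bound and of the lower bound for $p<q\le\infty$ via Lemma~\ref{lbel} is fine and matches the paper. The problem is the case $q\le p$, where your argument has a genuine gap.

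First, two computational errors. Your normalization $c\,2^{-rn}n^{-(d-1)/\t}$ for the $\Phi_\j$-based test function does not place it in $U\mathbf{B}_{p,\t}^r$; the correct factor is $c\,2^{-(r+1-1/p)n}n^{-(d-1)/\t}$ (cf.\ the proof of Lemma~\ref{lbel}). With the right normalization one gets $\|f\|_q\asymp 2^{-(r+1/q-1/p)n}n^{(d-1)(1/q-1/\t)}$, which for $q\le p$ is \emph{smaller} than the target $2^{-rn}n^{(d-1)(1-1/\t)}$; so your displayed equality is false except when $q=1$. Second, and more importantly, the expression $\prod_i\big(\h{\vp_{j_i}}(k_i)\h{\w\vp_{j_i}}(k_i)-\h{\vp_{j_i-1}}(k_i)\h{\w\vp_{j_i-1}}(k_i)\big)$ is \emph{not} the Fourier symbol of $\Delta_\j^Q$: the operators $Q_j$ are sampling-type and exhibit aliasing, so $Q_j(e_m)$ has nonzero Fourier coefficients at all $\ell\equiv m\pmod{2^j}$, not just at $\ell=m$. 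Your averaging trick does diagonalize $T_n^Q$, but the resulting multiplier $U_n$ simply annihilates every frequency outside the hyperbolic cross; this discards the aliasing entirely and reduces you to the best-approximation lower bound, which, as you yourself noted, does not produce the factor $n^{(d-1)(1-1/\t)}$ when $q\le p$.

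The paper's argument exploits the aliasing rather than averaging it away. It takes the test function $f_n=\sum_{|\u|_1=n+d\xi,\,u_i\ge\xi+1}e^{{\rm i}(2^{u_1}x_1+\dots+2^{u_d}x_d)}$ built from \emph{pure exponentials at dyadic frequencies} (not the blocks $\Phi_\j$), and computes the \emph{zeroth} Fourier coefficient $c_0\big(T_n^Q(f_n)\big)$. Since $\h{\vp_j}(0)=1$, the aliasing formula gives $c_0\big(Q_j(e_{2^u})\big)=\h{\w\vp_j}(2^u)$ for $j\le u$, and condition~\eqref{cond} is precisely what forces $c_0\big(Q_j(e_{2^u})-Q_{j-1}(e_{2^u})\big)$ to vanish for $j<u-\xi$ and to equal $\lambda\neq 0$ at $j=u-\xi$. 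The constraint $|\j|_1\le n$ together with $|\u|_1=n+d\xi$ then singles out the diagonal $\j=\u-\xi{\bf 1}$, yielding $c_0\big(T_n^Q(f_n)\big)\asymp n^{d-1}$. Because $c_0(f_n)=0$, the lower bound $\|g_n-T_n^Q(g_n)\|_q\ge|c_0(T_n^Q(g_n))|$ for the normalized $g_n=c\,2^{-rn}n^{-(d-1)/\t}f_n\in U\mathbf{B}_{p,\t}^r$ gives exactly $2^{-rn}n^{(d-1)(1-1/\t)}$. The point you are missing is that condition~\eqref{cond} controls the \emph{aliased} contribution at frequency zero, not the diagonal symbol; any approach that diagonalizes $T_n^Q$ first will lose this information.
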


\begin{proof}
In view of Theorem~\ref{tb2}, we only need to establish the estimates from below.
In the case $1\le p<q\le \infty$, the corresponding estimates easily follow from Lemma~\ref{lbel}.
Let us consider the case $q\le p$. We follow the proof of Theorem~2 in~\cite{SU07}.  Recall that if $f$ belongs to the Wiener algebra, then the Fourier coefficients of $Q_j(f)$ can be given by the following formula:
$$
\h{Q_j (f)}(k)=\h{\vp_j}(k)\sum_{\ell\in \Z}\h{\w\vp_j}(k+\ell 2^j)\h{f}(k+\ell 2^j).
$$
Thus, denoting $c_k(g)=\h g(k)$, $k\in \Z$, and ${\rm e}_m(x)={e}^{{\rm i}mx}$, we get
\begin{equation}\label{t2.2}
  c_0({Q_j ({\rm e}_{2^u})})=\sum_{\ell\in \Z}c_{\ell 2^j}(\w\vp_j)c_{\ell 2^j}({\rm e}_{2^u})=\left\{
                                                                           \begin{array}{ll}
                                                                             c_{2^u}(\w\vp_j), & \hbox{$j\le u$,} \\
                                                                             0, & \hbox{otherwise,}
                                                                           \end{array}
                                                                         \right.
\end{equation}
which implies that
\begin{equation}\label{t2.3}
  c_0\(Q_j ({\rm e}_{2^u})-Q_{j-1} ({\rm e}_{2^u})\)=\left\{
                                           \begin{array}{ll}
                                               0, & \hbox{$u\le j-2$,} \\
                                             -c_{2^u}(\w\vp_{j-1}), & \hbox{$j-1=u$,} \\
                                              c_{2^u}(\w\vp_j-\w\vp_{j-1}), & \hbox{$j\le u$,} \\
                                              c_{2^u}(\w\vp_0), & \hbox{$j=0$.}
                                           \end{array}
                                         \right.
\end{equation}
Now, we consider the function
$$
f_n(\x)=\sum_{{}^{|\u|_1=n+d\xi}_{u_i\ge \xi+1}} e^{{\rm i}2^{u_1}x_1+\dots {\rm i}2^{u_d}x_d}.
$$
Denoting ${\rm e}_m^i(\x)=e_m(x_i)$, we derive
\begin{equation}\label{t2.5}
  \begin{split}
    c_0\(T_n^Q (f_n)\)&=\sum_{{}^{|\u|_1=n+d\xi}_{u_i\ge \xi+1}} c_0\(T_n^Q ({\rm e}_{2^{u_1}}^1\dots {\rm e}_{2^{u_d}}^d)\)\\
&=\sum_{{}^{|\u|_1=n+d\xi}_{u_i\ge \xi+1}} \sum_{|\j|_1\le n} \prod_{i=1}^d c_0\(Q_{j_i} ({\rm e}_{2^{u_i}})-Q_{j_i-1} ({\rm e}_{2^{u_i}})\).
  \end{split}
\end{equation}
It follows from~\eqref{t2.3} and condition~\eqref{cond} that
$$
\prod_{i=1}^d c_0\(Q_{j_i} ({\rm e}_{2^{u_i}})-Q_{j_i-1} ({\rm e}_{2^{u_i}})\)=0\quad\text{if}\quad j_i< u_i-\xi\quad\text{for some}\quad i=1,\dots,d.
$$
Thus, since $|\u|_1=n+d\xi$ and $|\j|_1\le n$, we need to consider only $\j$ with $j_i=u_i-\xi$, $i=1,\dots,d$. This together with~\eqref{t2.5} and~\eqref{cond} implies that
\begin{equation}\label{t2.6}
  \begin{split}
    c_0\(T_n^Q (f_n)\)=\sum_{{}^{|\u|_1=n+d\xi}_{u_i\ge \xi+1}} |\l|^d\asymp n^{d-1}.
  \end{split}
\end{equation}
Next, using~\eqref{defb} and~\eqref{sum}, it is not difficult to verify that $\|f_n\|_{\mathbf{B}_{p,\t}^r}\asymp 2^{rn}n^{(d-1)/\t}$. Thus, we can choose a positive constant $c$ such that the function $g_n=c2^{-rn}n^{-(d-1)/\t}f_n$ belongs to $U\mathbf{B}_{p,\t}^r$.
Then, applying~\eqref{t2.6}, we obtain
\begin{equation*}
  \begin{split}
     \sup_{f\in U\mathbf{B}_{p,\t}^r }\|f-T_n^Q(f)\|_q&\ge \|g_n-T_n^Q(g_n)\|_q \gtrsim|c_0\(g_n-T_n^Q(g_n)\)|\\
&=|c_0\(T_n^Q(g_n)\)|=c 2^{-rn}n^{-\frac{d-1}{\t}}|c_0\(T_n^Q(f_n)\)|\asymp 2^{-r n}n^{(d-1)(1-\frac1\t)},
   \end{split}
\end{equation*}
which proves~\eqref{t2.1} in the case $q\le p$.
\end{proof}

\section{Main results for $F$-spaces}\label{sec5}

\subsection{Littlewood--Paley type characterizations}

To formulate an analogue of Theorem~\ref{tb1} in the case of $F$-spaces, we will use the following notation:
for $s>0$ and the families of functions $\vp=\(\vp_j\)_{j\in \Z_+}$ and $\w\vp=\(\w\vp_j\)_{j\in \Z_+}$, we denote
$$
\mu_{\j,i}^{(s)}=\mu_{\j,i}^{(s)}(\vp,\w\vp)=\bigg(\frac{1-\h{\vp_{j_i}}(k_i)\h{\w\vp_{j_i}}(k_i)}{(2^{-j_i} {\rm i}k_i)^s}\bigg)_{\k\in \Z^d},\quad \j\in \Z_+^d,\quad i=1,\dots,d.
$$

\begin{theorem}\label{tf1}
  Let $1\le p,\t<\infty$ and let $Q=\(Q_j(\cdot,\vp,\w\vp)\)_{j\in \Z_+}$, where
the sequences
$\vp=\(\vp_j\)_{j\in \Z_+}$ and $\w\vp=\(\w\vp_j\)_{j\in \Z_+}$ satisfy the following conditions:
\begin{itemize}
  \item[$1)$] $\vp_j\in \mathcal{T}_j^1$, $\sup_j \|(\h{\vp_j}(k))_{k}\|_{M_p(\T)}<\infty$, and $\(\(\h{\vp_{j_i}}(k_i)\)_{\k}\)_{\j\in \Z_+^d} \in M_p(\T^d,\ell_\t)$ for each $i=1,\dots,d$;
  \item[$2)$] $\sup_j \|\w\vp_j\|_{\mathcal{L}_{p',j}(\T)}<\infty$ and there exist constants $c>0$ and $\tau>0$ such that
\begin{equation}\label{lf1+}
  |(\w\vp_{j}*g)(x)|\le c\sup_{{|y|\le \tau 2^{-j}}}|g(x+y)|\quad\text{for all}\quad g\in C(\T);
\end{equation}
  \item[$3)$] compatibility condition~\eqref{wc} of order $s>0$ holds and
$(\mu_{\j,i}^{(s)})_{\j\in \Z_+^d}\in M_p(\T^d,\ell_\t)$ for each $i=1,\dots,d$.
\end{itemize}
Then every $f\in \mathbf{F}_{p,\t}^r(\T^d)$, $0<r<s$, admits the representation
\begin{equation}\label{tlpf.0}
  f=\sum_{\j\in \Z_+^d}\D_\j^Q(f)
\end{equation}
with unconditional convergence in $L_p(\T^d)$ and
\begin{equation}\label{tlpf.1}
\|2^{r|\j|_1}\Delta_\j^Q(f)\|_{L_p(\T^d,\ell_\t)}\asymp \|f\|_{\mathbf{F}_{p,\t}^r}.
\end{equation}
\end{theorem}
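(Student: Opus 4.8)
The plan is to follow the same strategy as in the proof of Theorem~\ref{tb1}: the lower bound for the equivalence~\eqref{tlpf.1} is immediate from Lemma~\ref{llpf} (the operators $\D_\j^Q$ produce trigonometric polynomials $\D_\j^Q(f)\in\mathcal{T}_{\j+\bm c}^d$ for some fixed shift $\bm c$, and $\|2^{r|\j|_1}\D_\j^Q(f)\|_{L_p(\T^d,\ell_\t)}$ controls $\|f\|_{\mathbf{F}_{p,\t}^r}$ from above for $f=\sum_\j \D_\j^Q(f)$, after the representation~\eqref{tlpf.0} is established). So the substance is: (a) prove the representation~\eqref{tlpf.0} with unconditional $L_p$-convergence, and (b) prove the upper estimate $\|2^{r|\j|_1}\D_\j^Q(f)\|_{L_p(\T^d,\ell_\t)}\lesssim\|f\|_{\mathbf{F}_{p,\t}^r}$.

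For step (a) I would argue as follows. First, the conditions on $\vp$ and $\w\vp$ are a superset of conditions 1)--3) of Theorem~\ref{tb1} (indeed condition 2) of Theorem~\ref{tf1} includes $\sup_j\|\w\vp_j\|_{\mathcal{L}_{p',j}}<\infty$, and~\eqref{lf1+} gives the pointwise domination needed in Lemma~\ref{lf}). Hence by the embedding~\eqref{e2} we have $\mathbf{F}_{p,\t}^r(\T^d)\hookrightarrow\mathbf{B}_{p,\max\{p,\t\}}^r(\T^d)$, and Theorem~\ref{tb1} already yields the representation $f=\sum_\j\D_\j^Q(f)$ with unconditional convergence in $L_p(\T^d)$ for every $f\in\mathbf{B}_{p,\max\{p,\t\}}^r$, in particular for every $f\in\mathbf{F}_{p,\t}^r$. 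This disposes of~\eqref{tlpf.0}.

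For step (b), which is the main obstacle, I would mimic the $B$-space argument of Lemma~\ref{pr1} but work in $L_p(\T^d,\ell_\t)$ using the Fourier-multiplier and maximal-function machinery of Section~\ref{sec3}. Write $\D_\j^Q(f)=\prod_{i=1}^d(Q_{j_i}^i-Q_{j_i-1}^i)f$ and expand it, as in~\cite{BU17}, against the Littlewood--Paley pieces $\d_\ell(f)$: since $Q_{j}-Q_{j-1}$ only ``sees'' frequencies up to $2^{j+c}$ and the $\vp$-factor makes the output a polynomial in $\mathcal{T}_{\j+\bm c}^d$, one gets $\D_\j^Q(f)=\sum_{\ell\ge\j-\bm c}\D_\j^Q(\d_\ell(f))$ (tensorized coordinatewise), and the tail decays because of the compatibility condition~\eqref{wc}: on the frequency block of $\d_\ell$ the symbol of $Q_{j}-Q_{j-1}$ is $O(2^{-s(\ell-j)})$ coordinatewise, which is exactly what the multiplier sequences $\mu_{\j+\bb}^{(s)}$ and $(\h{\vp_\j})$ encode. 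Thus I would bound
$$
\|2^{r|\j|_1}\D_\j^Q(f)\|_{L_p(\T^d,\ell_\t)}\lesssim \sum_{\bm m\in\Z_+^d} 2^{-(s-r)|\bm m|_1}\,\big\|2^{r|\j|_1}(\text{multiplier applied to }\d_{\j+\bm m-\bm c}(f))\big\|_{L_p(\T^d,\ell_\t)},
$$
and for each fixed shift $\bm m$ apply the $M_p(\T^d,\ell_\t)$-boundedness of the relevant multiplier families (hypotheses 1) and 3) of Theorem~\ref{tf1}, via Lemma~\ref{lmult}) together with the definition of $\|f\|_{\mathbf{F}_{p,\t}^r}$; the geometric factor $2^{-(s-r)|\bm m|_1}$ sums since $r<s$, giving the claim. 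The delicate point is handling the boundary terms $j_i=0$ and the $Q_{-1}=0$ convention, and making sure the tensor-product expansion only involves the finitely many multiplier families $\mu_{\j+\bb}^{(s)}$, $\bb\in\{-1,0\}^d$, and $(\h{\vp_\j}(\k))$ that are assumed to lie in $M_p(\T^d,\ell_\t)$ — this is where conditions 1) and 3) are used in full strength, and where the argument differs from the purely one-dimensional Lemma~\ref{lwc}. Once~\eqref{tlpf.1} is proved in the direction $\lesssim$, combining it with Lemma~\ref{llpf} applied to $f=\sum_\j\D_\j^Q(f)$ gives the reverse inequality, completing the proof.
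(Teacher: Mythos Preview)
Your handling of the representation~\eqref{tlpf.0} and of the lower bound via Lemma~\ref{llpf} is fine and matches the paper. The gap is in your upper bound for $\|2^{r|\j|_1}\D_\j^Q(f)\|_{L_p(\T^d,\ell_\t)}$.

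You propose to write $\D_\j^Q(f)=\sum_{\elll\ge \j-\bm c}\D_\j^Q(\d_\elll(f))$ and to extract decay $2^{-s(\ell_i-j_i)}$ on each block from the compatibility condition~\eqref{wc} and the multiplier families $\mu_{\j+\bb}^{(s)}$. This is wrong on two counts. First, $\D_\j^Q(\d_\elll(f))$ does \emph{not} vanish for $\elll<\j-\bm c$: the compatibility condition only says $Q_j\approx I$ on low frequencies, not $Q_j=I$, so low-frequency blocks contribute (this is the sum $S_1$ in the paper, and it is precisely where $\mu_{\j+\bb}^{(s)}\in M_p(\T^d,\ell_\t)$ is used, together with the Bernstein-type Lemma~\ref{lber}). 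Second, and more seriously, for high-frequency blocks $\elll\ge\j$ the operator $Q_\j$ is \emph{not} a Fourier multiplier on $\d_\elll(f)$: the sampling at rate $2^{j_i}$ aliases the frequencies $|k_i|\sim 2^{\ell_i}>2^{j_i}$, so there is no ``symbol of $Q_j-Q_{j-1}$ on the block $\d_\elll$'' to speak of, and certainly no decay $2^{-s(\ell_i-j_i)}$ coming from~\eqref{wc}. Your purely multiplier-based scheme cannot close this part of the argument.

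The paper treats the high-frequency piece (its sum $S_2$) by a different mechanism: it writes $Q_\j(\cdot,\vp,\w\vp)=\vp_\j*Q_\j(\cdot,v,\w\vp)$, applies condition~1) to strip the $\vp_\j$-convolution, and then invokes the pointwise estimate of Lemma~\ref{lf} (this is exactly where hypothesis~\eqref{lf1+} enters, and you never use it) to dominate $|Q_{\j+\bb}(\d_{\j+\elll}(f))|$ by $2^{a|\elll|_1}\big(\mathcal{M}(\mathcal{P}_{2^{\j+\elll},a}\d_{\j+\elll}(f))^\l\big)^{1/\l}$. The vector-valued maximal inequalities~\eqref{m1} and~\eqref{m2} then give $\|\cdot\|_{L_p(\T^d,\ell_\t)}\lesssim \|f\|_{\mathbf{F}_{p,\t}^r}$, but only with the geometric factor $2^{-(r-a)|\elll|_1}$, which sums only when $r>a>\max\{1/p,1/\t\}$. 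Thus the direct argument yields the upper bound only for $r>\max\{1/p,1/\t\}$. To reach all $r>0$ the paper then interpolates (Lemma~\ref{li}) between this range and the diagonal $p=\t$, where $\mathbf{F}_{p,p}^r=\mathbf{B}_{p,p}^r$ and Theorem~\ref{tb1} already gives the bound for every $r>0$. Your plan contains neither the maximal-function step nor the interpolation step, and without them the upper estimate cannot be obtained in the stated generality.
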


\begin{proof}
Similar to the proof of Theorem~\ref{tb1}, we have that equality~\eqref{tlpf.0} follows easily from Lemmas~\ref{pr1} and~\ref{lwc} and the embedding $\mathbf{F}_{p,\t}^r(\T^d)\hookrightarrow \mathbf{B}_{p,\max\{p,\t\}}^r(\T^d)$, see~\eqref{e2}. The estimate from below in~\eqref{tlpf.1} follows directly from Lemma~\ref{llpf}.
To prove the estimate from above, we first consider the case $r>\max\{1/p,1/\t\}$.
Let $f\in \mathbf{F}_{p,\t}^r(\T^d)$ and $\j\in \Z_+^d$. By~\eqref{1}, we have
\begin{equation}\label{f2}
  f(\x)=\sum_{\elll\in \Z^d} \d_{\j+\elll}(f)(\x),
\end{equation}
where we set $\d_{\j}(f)=0$ for $\j\in \Z^d \setminus \Z_+^d$. In view of the embedding $\mathbf{F}_{p,\t}^r(\T^d)\hookrightarrow C(\T^d)$, see~\eqref{e1}, the series~\eqref{f2} converges unconditionally in $C(\T^d)$. Thus, applying the operator $\D_\j^Q$ in~\eqref{f2}, we get
\begin{equation}\label{f3}
  |\D_\j^Q(f)(\x)|\le \sum_{\elll\in \Z^d} |\D_\j^Q\(\d_{\j+\elll}(f)\)(\x)|.
\end{equation}
Then, taking $L_p(\T^d,\ell_\t)$-norm on both sides of~\eqref{f3}, we have
\begin{equation}\label{f4}
\begin{split}
    \|2^{r|\j|_1}\D_\j^Q(f)(\x)\|_{L_p(\T^d,\ell_\t)}&\le \sum_{\elll\in \Z^d} \|2^{r|\j|_1}\D_\j^Q\(\d_{\j+\elll}(f)\)\|_{L_p(\T^d,\ell_\t)}\\
&=\sum_{\elll_2^d \in \Z^{d-1}}\sum_{\ell_1 <-1}(\dots)+\sum_{\elll_2^d \in \Z^{d-1}}\sum_{\ell_1 \ge -1}(\dots)=S_1+S_2.
\end{split}
\end{equation}
Consider the sum $S_1$. Denoting
$$
\D_{\j_k^d,k}^Q=\prod_{i=k}^d (Q_{j_i}^i-Q_{j_i-1}^i), \quad k=2,\dots,d,
$$
where $Q_{j_i}^i$ is the univariate operator $Q_{j_i}(\cdot,\vp,\w\vp)$ acting on functions in the variable $x_i$,
we obtain
\begin{equation}\label{f5}
\begin{split}
  S_1&=\sum_{\elll_2^d \in \Z^{d-1}}\sum_{\ell_1 <-1}2^{-r\ell_1} \|2^{r(j_1+\ell_1)+r|\j_2^d|_1}(Q_{j_1}^1-Q_{j_1-1}^1)\D_{\j_2^d,2}^Q(\d_{\j+\elll}(f))\|_{L_p(\T^d,\ell_\t)}\\
  &\le \sum_{b\in \{-1,0\}} \sum_{\elll_2^d \in \Z^{d-1}}\sum_{\ell_1 <-1}2^{-r\ell_1} \|2^{r(j_1+\ell_1)+r|\j_2^d|_1}(Q_{j_1+b}^1-I)\D_{\j_2^d,2}^Q(\d_{\j+\elll}(f))\|_{L_p(\T^d,\ell_\t)},
\end{split}
\end{equation}
where $I$ is the identity operator.
Next, taking into account that
$
Q_j(t,\vp,\w\vp)=\w\vp_j*\vp_j*t
$
for any trigonometric polynomial $t\in \mathcal{T}_{j-1}^1$ and using condition 3) and Lemma~\ref{lber}, we derive
\begin{equation}\label{f6}
\begin{split}
\bigg\|2^{r(j_1+\ell_1)+r|\j_2^d|_1} & (Q_{j_1+b}-I)\D_{\j_2^d,2}^Q(\d_{\j+\elll}(f)) \bigg\|_{L_p(\T^d,\ell_\t)}\\
&\lesssim \bigg\|2^{r(j_1+\ell_1)+r|\j_2^d|_1-sj_1} \frac{\partial^s}{\partial x_1^s}\D_{\j_2^d,2}^Q(\d_{\j+\elll}(f)) \bigg\|_{L_p(\T^d,\ell_\t)}\\
&\lesssim 2^{s\ell_1}\|2^{r(j_1+\ell_1)+r|\j_2^d|_1}\D_{\j_2^d,2}^Q(\d_{\j+\elll}(f))\|_{L_p(\T^d,\ell_\t)}\\
&\lesssim 2^{s\ell_1}\|2^{r|\j|_1}\D_{\j_2^d,2}^Q(\d_{j_1,j_2+\ell_2,\dots,j_d+\ell_d}(f))\|_{L_p(\T^d,\ell_\t)}.
\end{split}
\end{equation}
Then, combining~\eqref{f5} and~\eqref{f6} and taking into account that $\sum_{\ell_1<-1}2^{(s-r)\ell_1}<\infty$, we get
\begin{equation}\label{f7}
\begin{split}
S_1\lesssim \sum_{\elll_2^d \in \Z^{d-1}} \|2^{r|\j|_1}\D_{\j_2^d,2}^Q(\d_{j_1,j_2+\ell_2,\dots,j_d+\ell_d}(f))\|_{L_p(\T^d,\ell_\t)}
\end{split}
\end{equation}

Now, we consider the sum $S_2$. We have
\begin{equation}\label{f8}
\begin{split}
  S_2&=\sum_{\elll_2^d \in \Z^{d-1}}\sum_{\ell_1\ge -1}2^{-r\ell_1} \|2^{r(j_1+\ell_1)+r|\j_2^d|_1}(Q_{j_1}^1-Q_{j_1-1}^1)\D_{\j_2^d,2}^Q\(\d_{\j+\elll}(f)\)\|_{L_p(\T^d,\ell_\t)}\\
  &\le \sum_{b\in \{-1,0\}}\sum_{\elll_2^d \in \Z^{d-1}}\sum_{\ell_1 \ge -1}2^{-r\ell_1} \|2^{r(j_1+\ell_1)+r|\j_2^d|_1}Q_{j_1+b}^1\D_{\j_2^d,2}^Q\(\d_{\j+\elll}(f)\)\|_{L_p(\T^d,\ell_\t)}.
\end{split}
\end{equation}
Denote by $v=(v_j)_{j\in \Z_+}$ the sequence of de la Vall\'ee Poussin kernels given by
$$
v_j(x)=\sum_k \eta(2^{-j}k)e^{{\rm i}kx},\quad \eta(\xi)=\left\{
         \begin{array}{ll}
           1, & \hbox{$|\xi|\le 1$,} \\
           2-|\xi|, & \hbox{$1\le |\xi|\le 2$,} \\
           0, & \hbox{$|\xi|\ge 2$.}
         \end{array}
       \right.
$$
It is not difficult to see that
$$
Q_{j_1+b}^1(g,\vp,\w\vp)(x)=\vp_{j_1+b}*Q_{j_1+b}^1(g,v,\w\vp)(x)\quad\text{for every}\quad g\in C(\T^d).
$$
Using this equality, condition 1), and Lemma~\ref{lf} together with condition 2) and the well-known estimate $|v_j(x)|\lesssim 2^j(1+2^j|x|)^{-2}$, we obtain 
\begin{equation}\label{f9}
  \begin{split}
  \|&2^{r(j_1+\ell_1)+r|\j_2^d|_1}Q_{j_1+b}^1\D_{\j_2^d,2}^Q\(\d_{\j+\elll}(f)\)\|_{L_p(\T^d,\ell_\t)}\\
  &=\Big\|2^{r(j_1+\ell_1)+r|\j_2^d|_1}\vp_{j_1+b}*Q_{j_1+b}^1\(\D_{\j_2^d,2}^Q(\d_{\j+\elll}(f)),v,\w\vp\)\Big\|_{L_p(\T^d,\ell_\t)}\\
  &\lesssim  \Big\|2^{r(j_1+\ell_1)+r|\j_2^d|_1}Q_{j_1+b}^1\(\D_{\j_2^d,2}^Q(\d_{\j+\elll}(f)),v,\w\vp\)\Big\|_{L_p(\T^d,\ell_\t)}\\
     &\lesssim 2^{a\ell_1}\Big\|2^{r(j_1+\ell_1)+r|\j_2^d|_1}\( \mathcal{M}_1( \mathcal{P}_{2^{j_1+\ell_1},a,1}\D_{\j_2^d,2}^Q\(\d_{\j+\elll}(f)\) )^\l   \)^{1/\l}\Big\|_{L_p(\T^d,\ell_\t)},
  \end{split}
\end{equation}
where the parameters $\l$ and $a$ are chosen such that $1/2<\l<\min\{p,\t\}$ ($\l=1$ in the case $\min\{p,\t\}>1$) and $\max\{1/p,1/\t\}<a<r$. Next, applying maximal inequalities~\eqref{m1} and~\eqref{m2}, we get
\begin{equation*}
  \begin{split}
&\Big\|2^{r(j_1+\ell_1)+r|\j_2^d|_1}\( \mathcal{M}_1\( \mathcal{P}_{2^{j_1+\ell_1},a,1}\D_{\j_2^d,2}^Q\(\d_{\j+\elll}(f)\) \)^\l   \)^{1/\l}\Big\|_{L_p(\T^d,\ell_\t)}\\
&=\Big\|2^{\l r(j_1+\ell_1)+\l r|\j_2^d|_1}\mathcal{M}_1\( \mathcal{P}_{2^{j_1+\ell_1},a,1}\D_{\j_2^d,2}^Q\(\d_{\j+\elll}(f)\) \)^\l   \Big\|_{L_{p/\l}\(\T^d,\ell_{\t/\l}\)}^{1/\l}\\
&\lesssim \|2^{r(j_1+\ell_1)+r|\j_2^d|_1}\mathcal{P}_{2^{j_1+\ell_1},a,1}\D_{\j_2^d,2}^Q\(\d_{\j+\elll}(f)\)     \|_{L_p(\T^d,\ell_\t)}\\
&\lesssim \|2^{r(j_1+\ell_1)+r|\j_2^d|_1}\D_{\j_2^d,2}^Q\(\d_{\j+\elll}(f)\)     \|_{L_p(\T^d,\ell_\t)}
      =\|2^{r|\j|_1}\D_{\j_2^d,2}^Q(\d_{j_1,j_2+\ell_2,\dots,j_d+\ell_d}(f))\|_{L_p(\T^d,\ell_\t)}.
  \end{split}
\end{equation*}
This together with~\eqref{f9} and~\eqref{f8} implies that
\begin{equation}\label{f11}
  \begin{split}
     S_2 &\lesssim \sum_{b\in \{-1,0\}}\sum_{\elll_2^d \in \Z^{d-1}}\sum_{\ell_1 \ge -1}2^{-(r-a)\ell_1} \|2^{r|\j|_1}\D_{\j_2^d,2}^Q(\d_{j_1,j_2+\ell_2,\dots,j_d+\ell_d}(f))\|_{L_p(\T^d,\ell_\t)}\\
     &\lesssim \sum_{\elll_2^d \in \Z^{d-1}} \|2^{r|\j|_1}\D_{\j_2^d,2}^Q(\d_{j_1,j_2+\ell_2,\dots,j_d+\ell_d}(f))\|_{L_p(\T^d,\ell_\t)}.
  \end{split}
\end{equation}
Now, combining~\eqref{f4}, \eqref{f7}, and~\eqref{f11}, we derive
\begin{equation*}
  \|2^{r|\j|_1}\D_\j^Q(f)(\x)\|_{L_p(\T^d,\ell_\t)}\lesssim \sum_{\elll_2^d \in \Z^{d-1}} \|2^{r|\j|_1}\D_{\j_2^d,2}^Q\d_{j_1,j_2+\ell_2,\dots,j_d+\ell_d}(f)\|_{L_p(\T^d,\ell_\t)}.
\end{equation*}
Repeating the above procedure for the indices $\ell_2,\dots, \ell_d$, we get the upper estimate in~\eqref{tlpf.1} in the case $r>\max\{1/p,1/\t\}$.

To obtain the upper estimate in~\eqref{tlpf.1} for all $r>0$, we use Lemma~\ref{li}
and follow the idea given in~\cite{S92}. For this, we consider the linear operator $R\,:\, \mathbf{F}_{p,\t}^{r}(\T^d) \mapsto L_p(\T^d,\ell_\t^r)$ defined by $Rf=\big(\D_\j^Q (f)\big)_{\j\in \Z_+^d}$.
In view of the equality $\mathbf{F}_{p,p}^{r}(\T^d)=\mathbf{B}_{p,p}^{r}(\T^d)$ and Theorem~\ref{tb1}, the operator $R$ is bounded if $r>0$ and $p=\t$. Above, we proved that this operator is also bounded for all $1\le p<\infty$ and $1\le \t\le \infty$ if $r>\max\{1/p,1/\t\}$. Hence, applying Lemma~\ref{li}, we get that the operator $R$ is bounded as a mapping with respect to the intermediate spaces  $\mathbf{F}_{p,\t}^{r}(\T^d) \mapsto L_p(\T^d,\ell_\t^r)$ with $r>0$, $1\le p<\infty$, and $1\le \t<\infty$. This proves the upper estimate in~\eqref{tlpf.1} for all cases of parameters.
\end{proof}

\begin{remark}
It follows from the above proof that Theorem~\ref{tf1} is also valid in the case $\t=\infty$ if we additionally suppose that $r>1/p$.
\end{remark}

\begin{remark}
Note that Theorem~\ref{tf1} is not valid in the case $r=0$. Otherwise, using the arguments from the proof of Theorems~5 and~6 in~\cite{SU06}, we would be able to obtain the same rate of convergence for the operators $T_n^Q$ as one given in~\eqref{S2}, but according to the lower estimates established in Theorem~\ref{tb3}, this is impossible.
\end{remark}

\subsection{Error estimates}

\begin{theorem}\label{tf2}
  Let $1\le p,\t<\infty$, $1\le q\le \infty$, $(1/p-1/q)_+<r<s$,  and let $Q=\(Q_j(\cdot,\vp,\w\vp)\)_{j\in \Z_+}$, where
$\vp=\(\vp_j\)_{j\in \Z_+}$ and $\w\vp=\(\w\vp_j\)_{j\in \Z_+}$ satisfy conditions 1)--3) of Theorem~\ref{tf1}.
Then, for every $f\in \mathbf{F}_{p,\t}^r(\T^d)$ and $n\in \N$, we have
\begin{equation}\label{f13}
    \|f-T_n^Q(f)\|_q \lesssim \|f\|_{\mathbf{F}_{p,\t}^r}
\left\{
    \begin{array}{ll}
    2^{-r n}n^{(d-1)(1-\frac1\t)}, & \hbox{$q\le p$,} \\
    2^{-(r-\frac1p+\frac1q) n}, & \hbox{$p<q<\infty$,} \\
    2^{-(r-\frac1p) n}n^{(d-1)(1-\frac1p)}, & \hbox{$p<q=\infty$.}
    \end{array}
\right.
\end{equation}
\end{theorem}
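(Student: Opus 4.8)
The plan is to deduce Theorem~\ref{tf2} from the Littlewood--Paley characterization in Theorem~\ref{tf1} exactly as Theorem~\ref{tb2} was deduced from Theorem~\ref{tb1}, with the only extra ingredient being the embeddings \eqref{e2} that compare $F$-spaces with $B$-spaces. First I would invoke Theorem~\ref{tf1} to write
\begin{equation*}
  \|f-T_n^Q(f)\|_q = \Big\|\sum_{|\j|_1>n}\D_\j^Q(f)\Big\|_q,
\end{equation*}
noting that $(1/p-1/q)_+<r<s$ together with $1\le p,\t<\infty$ puts us in the range where Theorem~\ref{tf1} applies (and the unconditional $L_p$-convergence in \eqref{tlpf.0} justifies rearranging the series). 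From \eqref{tlpf.1} we then have at our disposal the estimate $\|2^{r|\j|_1}\D_\j^Q(f)\|_{L_p(\T^d,\ell_\t)}\lesssim\|f\|_{\mathbf{F}_{p,\t}^r}$.

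Next I would treat the three ranges of $q$ separately. For $q\le p$: use the embedding $\mathbf{F}_{p,\t}^r(\T^d)\hookrightarrow\mathbf{B}_{p,\max\{p,\t\}}^r(\T^d)$ from \eqref{e2} to pass to a Besov norm, and then apply Theorem~\ref{tb2} (in the range $q\le p$) to the function $f$ viewed as an element of $\mathbf{B}_{p,\max\{p,\t\}}^r(\T^d)$; since $1-1/\max\{p,\t\}\le 1-1/\t$ this already gives the bound $2^{-rn}n^{(d-1)(1-1/\t)}\|f\|_{\mathbf{F}_{p,\t}^r}$ — alternatively one can reproduce the Minkowski/H\"older computation directly from \eqref{tlpf.1} after first estimating the $\ell_\t$-norm by the pointwise $\ell_1$-sum and using \eqref{sum}. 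For $p<q=\infty$: again embed into $\mathbf{B}_{p,\max\{p,\t\}}^r$ and apply Theorem~\ref{tb2} in the case $p<q=\infty$, which yields $2^{-(r-1/p)n}n^{(d-1)(1-1/\max\{p,\t\})}$; but here one wants the cleaner exponent $n^{(d-1)(1-1/p)}$, which is obtained most transparently by applying Nikolskii's inequality \eqref{Nik} to each $\D_\j^Q(f)$, writing $\|\D_\j^Q(f)\|_\infty\lesssim 2^{|\j|_1/p}\|\D_\j^Q(f)\|_p$, then using H\"older over $|\j|_1>n$ with the splitting $2^{|\j|_1/p}=2^{-r|\j|_1}\cdot 2^{(r+1/p)|\j|_1}$, the geometric sum \eqref{sum} with the level-counting factor $n^{d-1}$, and the $\ell_p$-version of \eqref{tlpf.1} (available via $\mathbf{F}_{p,\t}^r\hookrightarrow\mathbf{B}_{p,\max\{p,\t\}}^r$ when $\t>p$, or directly when $\t\le p$).

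The case $p<q<\infty$ is where the $F$-space gives a genuinely better exponent than the $B$-space version (no $n$-power at all), and this is the step I expect to be the main obstacle. The tool is Lemma~\ref{lsum2}: since each $\D_\j^Q(f)\in\mathcal{T}_{\c\j}^d$ for a fixed dilation constant, \eqref{sum2} gives
\begin{equation*}
  \|f-T_n^Q(f)\|_q \lesssim \Big(\sum_{|\j|_1>n}\big(2^{(1/p-1/q)|\j|_1}\|\D_\j^Q(f)\|_p\big)^q\Big)^{1/q}.
\end{equation*}
Now factor $2^{(1/p-1/q)|\j|_1}=2^{-(r-1/p+1/q)|\j|_1}\cdot 2^{r|\j|_1}$; since $r-1/p+1/q>0$, on the set $|\j|_1>n$ the first factor is at most $2^{-(r-1/p+1/q)n}$, so it suffices to bound $\big(\sum_{\j}(2^{r|\j|_1}\|\D_\j^Q(f)\|_p)^q\big)^{1/q}$ by $\|f\|_{\mathbf{F}_{p,\t}^r}$. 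When $\t\le q$ this follows from the elementary embedding $\ell_\t\hookrightarrow\ell_q$ together with \eqref{tlpf.1}; when $\t>q$ one cannot simply sum in $\ell_q$, but one may instead use $\mathbf{F}_{p,\t}^r\hookrightarrow\mathbf{F}_{p,q}^r$ from \eqref{e3} (valid since $q<\t$, with $q<\infty$) and then \eqref{tlpf.1} for the index $q$ in place of $\t$ — here it is essential that Theorem~\ref{tf1} was established for all $1\le\t<\infty$. Assembling the three cases completes the proof; the only subtlety to watch is that every auxiliary embedding used is within the admissible parameter ranges ($p<\infty$, finite secondary index where required), which is guaranteed by the hypotheses $1\le p,\t<\infty$ and $(1/p-1/q)_+<r$.
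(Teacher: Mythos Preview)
Your overall strategy (decompose via Theorem~\ref{tf1} and estimate the tail) is right, and the case $q\le p$ is fine. But in the two remaining cases the elementary embeddings you invoke go the wrong way, and this is precisely where the $F$-space estimate is strictly sharper than what one gets by passing to $B$-spaces.

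For $p<q<\infty$ with $\t>q$ you write ``use $\mathbf{F}_{p,\t}^r\hookrightarrow\mathbf{F}_{p,q}^r$ from~\eqref{e3}''. But~\eqref{e3} says $\mathbf{F}_{p,\t_1}^r\hookrightarrow\mathbf{F}_{p,\t_2}^r$ for $\t_1<\t_2$, so with $q<\t$ you only get $\mathbf{F}_{p,q}^r\hookrightarrow\mathbf{F}_{p,\t}^r$, the opposite direction. If instead you push your Lemma~\ref{lsum2} bound through H\"older with exponents $(\t/q,(\t/q)')$ and the available $\ell_\t(L_p)$-control (via $\mathbf{F}_{p,\t}^r\hookrightarrow\mathbf{B}_{p,\t}^r$, valid since $\t>q>p$), you pick up an extra factor $n^{(d-1)(1/q-1/\t)}$, which is not the claimed bound. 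The paper avoids this by using the Sobolev-type embedding $\mathbf{F}_{p,\t}^{r}\hookrightarrow\mathbf{F}_{q,1}^{r-1/p+1/q}$ from~\eqref{e4}: one estimates $\|\sum_{|\j|_1>n}|\D_\j^Q(f)|\|_q$ directly by $2^{-(r-1/p+1/q)n}\|f\|_{\mathbf{F}_{q,1}^{r-1/p+1/q}}$ and then applies~\eqref{e4}.

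For $q=\infty$ with $\t>p$ you claim the ``$\ell_p$-version of~\eqref{tlpf.1}'' is available via $\mathbf{F}_{p,\t}^r\hookrightarrow\mathbf{B}_{p,\max\{p,\t\}}^r$. When $\t>p$ this embedding lands in $\mathbf{B}_{p,\t}^r$, giving control of the $\ell_\t(L_p)$-norm, which is \emph{smaller} than the $\ell_p(L_p)$-norm you need for your H\"older step with exponent $p$; running H\"older with $\t',\t$ instead yields $n^{(d-1)(1-1/\t)}$, again worse than the asserted $n^{(d-1)(1-1/p)}$. The paper's fix is to apply Nikolskii from $L_\infty$ to $L_{\tilde p}$ for some $\tilde p>p$, do H\"older with exponents $p',p$, and then invoke the Jawerth--Franke-type embedding $\mathbf{F}_{p,\t}^{r}\hookrightarrow\mathbf{B}_{\tilde p,p}^{\,r-1/p+1/\tilde p}$, which gives exactly the $\ell_p(L_{\tilde p})$-control that is needed and is \emph{not} a consequence of~\eqref{e2} or~\eqref{e3}.
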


\begin{proof}
The proof is quite standard and similar to the proof of Theorem~\ref{tb2}. For convenience of the reader, we give a proof of this theorem for all cases of parameters.

First, by Theorem~\ref{tf1}, we have
\begin{equation}\label{f14}
  \|f-T_n^Q(f)\|_q=\bigg\|\sum_{|\j|_1>n}\D_\j^Q(f)\bigg\|_q.
\end{equation}

1) The case $q\le p$. Using~\eqref{f14}, twice H\"older's inequality, and estimates~\eqref{sum} and~\eqref{tlpf.1}, we derive
\begin{equation*}
  \begin{split}
     &\|f-T_n^Q(f)\|_q\lesssim \bigg\|\sum_{|\j|_1>n}|\D_\j^Q(f)|\bigg\|_p\\
&\lesssim \left\|\(\sum_{|\j|_1>n} 2^{-r\t'|\j|_1}\)^{1/\t'} \!\!\(\sum_{|\j|_1>n} 2^{r\t|\j|_1}|\D_\j^Q(f)|^\t\)^{1/\t}\right\|_p\lesssim 2^{-r n}n^{(d-1)(1-\frac1\t)} \| f\|_{\mathbf{F}_{p,\t}^r}.
   \end{split}
\end{equation*}

2) The case $p<q<\infty$.  We get from~\eqref{f14} and~\eqref{tlpf.1} that
\begin{equation*}
  \begin{split}
     \|f-T_n^Q(f)\|_q&\lesssim 2^{-(r-\frac1p+\frac1q)n} \bigg\|\sum_{|\j|_1>n}2^{-(r-\frac1p+\frac1q)|\j|_1}|\D_\j^Q(f)|\bigg\|_q\\
&\lesssim 2^{-(r-\frac1p+\frac1q)n} \|f\|_{\mathbf{F}_{q,1}^{r-1/p+1/q}}.
   \end{split}
\end{equation*}
This together with the embedding $\mathbf{F}_{p,\t}^{r}(\T^d)\hookrightarrow\mathbf{F}_{q,1}^{r-1/p+1/q}(\T^d)$, see~\eqref{e4}, implies the required estimate in~\eqref{f13}.

3) The case $p<q=\infty$. Applying~\eqref{f14}, triangle inequality, Nikolskii's and H\"older's inequalities along with estimates~\eqref{sum} and~\eqref{tlpf.1}, we obtain
\begin{equation}\label{f15}
  \begin{split}
     &\|f-T_n^Q(f)\|_\infty\lesssim \sum_{|\j|_1>n}2^{|\j|_1/{\w p}}\|\D_\j^Q(f)\|_{\w p}\\
&\lesssim\(\sum_{|\j|_1>n} 2^{-(r-1/p)p'|\j|_1}\)^{1/p'} \(\sum_{|\j|_1>n} 2^{rp(1-1/p+1/\w p)|\j|_1}\|\D_\j^Q(f)\|_{\w p}^p\)^{1/p}\\
&\lesssim 2^{-(r-\frac1p) n}n^{(d-1)(1-\frac1p)} \| f\|_{\mathbf{B}_{\w p,p}^{r-1/p+1/\w p}}\,.
   \end{split}
\end{equation}
Next, using the Jawerth--Franke-type embedding $\mathbf{F}_{p,\t}^{r}(\T^d) \hookrightarrow \mathbf{B}_{\w p,p}^{r-1/p+1/\w p}(\T^d)$ (see~\cite{HV09} and~\cite[Remark~3.8]{DTU18}), we get that~\eqref{f15} implies~\eqref{f13} in the case $q=\infty$.
\end{proof}

\begin{theorem}\label{tf3}
  Let $1\le p,\t<\infty$, $1\le q\le \infty$, $(1/p-1/q)_+<r<s$, and let $Q=\(Q_j(\cdot,\vp,\w\vp)\)_{j\in \Z_+}$, where
$\vp=\(\vp_j\)_{j}$ and $\w\vp=\(\w\vp_j\)_{j}$ satisfy conditions 1)--3) of Theorem~\ref{tf1}. Suppose additionally that
$\h{\vp_j}(0)=1$, $j\in \Z_+$, and condition~\eqref{cond} holds for $\w\vp_j$.
Then
\begin{equation}\label{t2.1}
   \sup_{f\in U\mathbf{F}_{p,\t}^r }\|f-T_n^Q(f)\|_q \asymp
\left\{
    \begin{array}{ll}
     2^{-r n}n^{(d-1)(1-\frac1\t)}, & \hbox{$q\le p$,} \\
     2^{-(r-\frac1p+\frac1q) n}, & \hbox{$p<q<\infty$,} \\
     2^{-(r-\frac1p) n}n^{(d-1)(1-\frac1p)}, & \hbox{$q=\infty$, $\min\{2,p\}\le\t$.}
    \end{array}
\right.
\end{equation}
\end{theorem}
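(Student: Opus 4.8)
The plan is to obtain the equivalences in~\eqref{t2.1} by combining the upper bounds, which are exactly the estimates of Theorem~\ref{tf2}, with matching lower bounds proved separately in the three regimes $q\le p$, $p<q<\infty$, and $q=\infty$. One preliminary remark is used throughout: since $\vp_j\in\mathcal{T}_j^1$, every $\D_\j^Q(f)$ has spectrum inside $\prod_{i=1}^d\{k\in\Z:|k|<2^{j_i}\}$, so $T_n^Q$ maps into $\mathcal{T}(\mathcal{Q}_n)$; moreover, by Theorem~\ref{tf2} and the embeddings~\eqref{e1} and~\eqref{e4} (note that $r>1/p$ when $q=\infty$), $T_n^Q$ is a bounded linear operator from $\mathbf{F}_{p,\t}^r(\T^d)$, and hence from any space continuously embedded in it, into $\mathcal{T}(\mathcal{Q}_n)$.

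The main case is $q\le p$, where I would essentially transcribe the argument of Theorem~\ref{tb3}. With $\xi\in\N$ from~\eqref{cond}, take the lacunary trigonometric polynomial
\[
f_n(\x)=\sum_{|\u|_1=n+d\xi,\ u_i\ge\xi+1}e^{{\rm i}(2^{u_1}x_1+\dots+2^{u_d}x_d)}.
\]
Its frequency vectors lie on a single dyadic hyperbolic layer at level $n+d\xi$, so $\d_\j(f_n)$ is uniformly bounded and vanishes unless $|j_i-u_i|\le 1$ for all $i$ for some admissible $\u$; there are only $\asymp n^{d-1}$ such $\j$, all with $|\j|_1\asymp n$, which gives at once $\|f_n\|_{\mathbf{F}_{p,\t}^r}\lesssim 2^{rn}n^{(d-1)/\t}$ --- this is the single point not already contained in the Besov case. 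On the other hand, the Fourier coefficient identity $\h{Q_j(g)}(k)=\h{\vp_j}(k)\sum_{\ell}\h{\w\vp_j}(k+\ell2^j)\h g(k+\ell2^j)$, the normalisation $\h{\vp_j}(0)=1$, and condition~\eqref{cond} give, exactly as in the proof of Theorem~\ref{tb3}, $c_0(T_n^Q(f_n))=|\l|^d\,\#\{\u\}\asymp n^{d-1}$. Rescaling to $g_n:=c\,2^{-rn}n^{-(d-1)/\t}f_n\in U\mathbf{F}_{p,\t}^r$ for a small $c>0$, using $c_0(f_n)=0$ and $|c_0(h)|\lesssim\|h\|_q$ on $\T^d$, I would conclude
\[
\sup_{f\in U\mathbf{F}_{p,\t}^r}\|f-T_n^Q(f)\|_q\ge\|g_n-T_n^Q(g_n)\|_q\gtrsim|c_0(T_n^Q(g_n))|\asymp 2^{-rn}n^{(d-1)(1-1/\t)},
\]
which matches Theorem~\ref{tf2}.

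In the cases $p<q<\infty$ and $q=\infty$ the lower bounds are not constructive and come from the general Lemmas~\ref{lbel} and~\ref{lbel0}. For $p<q<\infty$, the embedding $\mathbf{B}_{p,\min\{p,\t\}}^r\hookrightarrow\mathbf{F}_{p,\t}^r$ from~\eqref{e2} reduces the problem to Lemma~\ref{lbel} applied to the class $U\mathbf{B}_{p,\min\{p,\t\}}^r$; since $\min\{p,\t\}\le p<q$, the exponent $(1/q-1/\min\{p,\t\})_+$ equals $0$, so the bound is $\gtrsim 2^{-(r-1/p+1/q)n}$, with no logarithmic factor, as needed. For $q=\infty$ I would split on the hypothesis $\min\{2,p\}\le\t$: if $\t\ge p$, then $\mathbf{B}_{p,p}^r\hookrightarrow\mathbf{F}_{p,\t}^r$ by~\eqref{e2} and Lemma~\ref{lbel} (with $q=\infty$) gives $\gtrsim 2^{-(r-1/p)n}n^{(d-1)(1-1/p)}$; if $\t<p$, then $\min\{2,p\}\le\t<p$ forces $p\ge2$ and $\t\ge2$, so $\mathbf{W}_p^r=\mathbf{F}_{p,2}^r\hookrightarrow\mathbf{F}_{p,\t}^r$ by~\eqref{e3} and Lemma~\ref{lbel0} yields the same bound $\gtrsim 2^{-(r-1/p)n}n^{(d-1)(1-1/p)}$.

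The only genuine obstacle lies in the case $q\le p$, and it is minor: one must check that the aliasing construction from Theorem~\ref{tb3} produces a function with $\mathbf{F}_{p,\t}^r$-norm controlled by $2^{rn}n^{(d-1)/\t}$, which follows immediately from its lacunary spectrum. Elsewhere the only point requiring care is that each lower bound reproduce the exponent of $n$ in Theorem~\ref{tf2} exactly: for $p<q<\infty$ the absence of a logarithm rests on $\min\{p,\t\}\le p<q$, and for $q=\infty$ the hypothesis $\min\{2,p\}\le\t$ is precisely what guarantees that one of $\mathbf{B}_{p,p}^r$, $\mathbf{W}_p^r$ embeds into $\mathbf{F}_{p,\t}^r$, which is needed to reach the exponent $(d-1)(1-1/p)$.
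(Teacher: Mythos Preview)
Your proposal is correct and follows essentially the same route as the paper's own proof. The only noteworthy variation is in the case $p<q<\infty$ (and $(p,q)=(1,\infty)$): the paper bypasses Lemma~\ref{lbel} and instead tests directly against $f(\x)=c\,2^{-n(r+1-1/p)}\Phi_{n+4,0,\dots,0}(\x)$, using~\eqref{bel3} to read off the lower bound, whereas you invoke Lemma~\ref{lbel} through the embedding $\mathbf{B}_{p,\min\{p,\t\}}^r\hookrightarrow\mathbf{F}_{p,\t}^r$; both arguments yield the same constant-free exponential rate.
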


\begin{proof}
In view of Theorem~\ref{tf2}, we only need to consider the estimates from below.
In the case $q\le p$, the proof of the corresponding inequality is the same as the one given in Theorem~\ref{tb3}. We only note that the function $g_n=c2^{-rn}n^{-(d-1)/\t}f_n$ from the proof of that theorem belongs also to $U\mathbf{F}_{p,\t}^r$. To establish the lower estimate in~\eqref{t2.1} for the cases $p<q<\infty$ and $(p,q)=(1,\infty)$, it suffices to consider the function $f(\x)=c 2^{-n(r+1-1/p)}\Phi_{n+4,0,\dots,0}(\x)$, cf.~\eqref{phij}, and to use~\eqref{bel3}. Finally, considering the case $1<p<q=\infty$, we have that if $2\le \t$, then the proof of~\eqref{t2.1} follows from Lemma~\ref{lbel0} and
the embedding $\mathbf{F}_{2,\t}^r(\T^d) \hookrightarrow \mathbf{F}_{p,\t}^r(\T^d)$, see~\eqref{e3}. If $p\le \t$, then the proof of~\eqref{t2.1} follows from Theorem~\ref{tb3} and the embedding $\mathbf{B}_{p,p}^r(\T^d) \hookrightarrow \mathbf{F}_{p,\t}^r(\T^d)$, see~\eqref{e2}.
\end{proof}

\section{Examples}\label{sec6}

In this section, we give several examples to illustrate Theorems~\ref{tb1}--\ref{tb3} and~\ref{tf1}--\ref{tf3}. Consider the Kantorovich-type operators $K_j^\mathcal{D}(f)=Q_j(f,\vp,\w\vp)$ generated by the family of the normalized characteristic functions $\w\vp_j(x)=2^{j+\s}\chi_{[-\frac{\pi}{2^{j+\s}},\frac{\pi}{2^{j+\s}}]}(x)$, $\s\ge 2$, $j\in \Z_+$,  and the modified Dirichlet kernels $\vp_j(x)=\mathcal{D}_j(x)=\sum_{\ell\in A_j} e^{{\rm i}\ell x}$, $j\in \Z_+$, i.e.,
\begin{equation}\label{ek1}
  K_j^\mathcal{D}(f)(x)=\sum_{k\in A_j} \frac{2^{\s-1}}{\pi}\int_{-\pi2^{-j-\s}}^{\pi2^{-j-\s}} f\big(t+x_k^j\big){\rm d}t\, \mathcal{D}_j\big(x-x_k^j\big).
\end{equation}
Note that for such operators, it suffices to consider only the conditions of Theorems~\ref{tb1} and~\ref{tf1} with $1<p<\infty$.

It is well known that $\sup_{j}\|(\chi_{[-2^{j-1},2^{j-1})}(k))_{k\in \Z}\|_{M_p(\T)}<\infty$ for $1<p<\infty$, see, e.g.,~\cite[2.1.3]{TB}. Also, it is easy to see that $\sup_{j}\|\w\vp_j\|_{\mathcal{L}_{p',j}}<\infty$. Thus, conditions 1) and 2) of Theorem~\ref{tb1} for $1<p<\infty$ are satisfied. To verify condition 3), we consider the function
$$
g(\xi)=\frac{1-\frac{\sin 2^{-\s}\pi \xi}{2^{-\s}\pi \xi}}{\xi^2}\phi_0(4\xi).
$$
It is not difficult to see that $g$ and $g'$ belong to $L_2(\R)$. By Lemma~\ref{lmult-}, this implies that $\sup_j \|(g(2^{-j}k))_{k}\|_{M_p(\T)}<\infty$. Thus, taking into account the equality
$$
g(2^{-j}k)=\frac{1-\h{\vp_j}(k)\h{\w\vp_j}(k)}{(2^{-j}k)^2}\phi_0(2^{-j+2}k),\quad k\in \Z,\,\, j\in \Z_+,
$$
we have that condition 3) of Theorem~\ref{tb1} is also satisfied for the families $\vp$ and $\w\vp$. In particular, the following result holds.

\begin{corollary}\label{corex1}
 Let $1<p<\infty$, $1\le \t\le \infty$ and let $Q=(K_j^\mathcal{D})_{j\in \Z_+}$.
 Then, for every $f\in \mathbf{B}_{p,\t}^r(\T^d)$, $0<r<2$, representation~\eqref{tlpb.0} and equivalence~\eqref{tlpb.1} are valid.
\end{corollary}

Now we consider the conditions of Theorem~\ref{tf1} that partially coincide with those of Theorem~\ref{tb1} verified above. Let $\j\in \Z_+^d$ and $i\in \{1,\dots,d\}$. Denote $B_{\j,i}=\{t\in \T^d\,:\, -2^{j_k+b_{k,i}}\le t_k<2^{j_k+b_{k,i}},\, k=1,\dots,d\}$, where $b_{k,i}=0$ if $k=i$ and $b_{k,i}=2$ otherwise.  According to \cite[3.4.3, Remark~3]{ST87}), we have that $\((\chi_{B_{\j,i}}(\k))_{\k\in \Z^d}\)_{\j\in \Z_+^d} \in M_p(\T^d,\ell_\t)$ for $1<p<\infty$, which implies that $\(\(\h{\vp_{j_i}}(k_i)\)_{\k\in \Z^d}\)_{\j\in \Z_+^d} \in M_p(\T^d,\ell_\t)$. Hence, condition 1) of Theorem~\ref{tf1} is satisfied.
Next, it is easy to see that inequality~\eqref{lf1+} obviously holds and by the same arguments as above, considering
the functions
$$
h_i(\bm{\xi})=\frac{1-\frac{\sin 2^{-\s}\pi \xi_i}{2^{-\s}\pi \xi_i}}{\xi_i^2}\phi_0(4\xi_i)\prod_{\nu=1}^d \phi_0(\xi_\nu),\quad i=1,\dots,d,
$$
and using Lemma~\ref{lmult}, we derive that the corresponding families $(\mu^{(2)}_{\j,i}(\vp,\w\vp))_{\j\in \Z_+^d}$ belong to  $M_p(\T^d,\ell_\t)$ for each $i=1,\dots,d$. Thus, all conditions of Theorem~\ref{tf1}  are satisfied and we obtain the following result.

\begin{corollary}\label{corex2}
 Let $1<p,\t<\infty$ and let $Q=(Q_j(\cdot,\vp,\w\vp))_{j\in \Z_+}$, where $Q_j(\cdot,\vp,\w\vp)$ is defined in~\eqref{ek1}
 Then, for every $f\in \mathbf{F}_{p,\t}^r(\T^d)$, $0<r<2$, representation~\eqref{tlpf.0} and equivalence~\eqref{tlpf.1} are valid.
\end{corollary}

Note that, dealing with operators~\eqref{ek1}, we can apply Theorems~\ref{tb1}--\ref{tb3} and~\ref{tf1}--\ref{tf3} only for functions
$f$ from the spaces $\mathbf{B}_{p,\t}^r(\T^d)$ and $\mathbf{F}_{p,\t}^r(\T^d)$ with $r\in (0,2)$. To consider functions of higher smoothness, we need to modify a little the operators defined in~\eqref{ek1}. One possibility is to replace the Dirichlet kernel $\mathcal{D}_j$ by the kernel $\vp_j(x)=\mathcal{D}_j^*(x)=\sum_{\ell\in A_j}\frac{\pi2^{-j-\s}\ell}{\sin \pi 2^{-j-\s}\ell} {\rm e}^{{\rm i} \ell x}$. In this case, we obviously have that $1-\h{\vp_j}(k)\h{\w\vp_j}(k)=0$ for $k\in A_j$, and the corresponding compatibility conditions hold for all $s>0$. Another possibility, is to replace the Dirichlet kernel $\mathcal{D}_j$ by its linear combinations $\mathcal{D}_{j,s}^*(x)=\sum_{\nu=0}^N a_{\nu} \mathcal{D}_j(x-\frac{\pi \nu}{2^{j+\s}})$, where $s$ is a given parameter and $N$ and $\a_{\nu}$ are chosen such that
$$
1-\frac{\sin \xi}{\xi}\sum_{\nu=0}^N  a_\nu {\rm e}^{i\nu\xi}=\a\xi^s+\mathcal{O}(\xi^{s+1}),\quad \xi\to 0.
$$
Then, using Lemmas~\ref{lmult-} and~\ref{lmult} and the argument above, one can verify that the corresponding compatibility conditions in Theorems~\ref{tb1}--\ref{tb3} and~\ref{tf1}--\ref{tf3} hold for a given $s>0$. For example, for $s=3$, we can take $N=2$ and
$a_0=\frac56$, $a_1=\frac26$, $a_2=-\frac16$. Similarly, instead of linear combinations of the Dirichlet kernel one can consider linear combinations of the corresponding characteristic function $\w\vp_j$, see, e.g.,~\cite{KS17}.

It is also worth noting that we can take de la Vall\'ee Poussin kernels as well as Fej\'er or Riesz kernels instead of the Dirichlet kernel $\mathcal{D}_j$ in formula~\eqref{ek1}. In this case, using Lemmas~\ref{lmult-} and~\ref{lmult}, the above arguments can be easily extended to parameter $p=1$ and $\infty$. In particular, for the operators
\begin{equation}\label{ex3+}
  K_j^\mathcal{V}(f)(x)=\sum_{k\in A_j} \frac{2^{\s-1}}{\pi}\int_{-\pi 2^{-j-\s}}^{\pi2^{-j-\s}} f\big(t+x_k^j\big){\rm d}t\, v_j\big(x-x_k^j\big)
\end{equation}
generated by the classical de la Vall\'ee Poussin kernel $v_j(x)=\sum_{\ell} \eta(2^{-j}\ell)e^{{\rm i}\ell x}$ with
$$
\eta(\xi)=\left\{
            \begin{array}{ll}
              1, & \hbox{$|\xi|\le 1/2$,} \\
              2(1-|\xi|), & \hbox{$1/2\le |\xi|\le 1$,} \\
              0, & \hbox{$|\xi|\ge 1$,}
            \end{array}
          \right.
$$
we have the following corollaries.

\begin{corollary}\label{corex3}
 Let $1\le p, \t\le \infty$ and let $Q=(K_j^\mathcal{V})_{j\in \Z_+}$.
 Then, for every $f\in \mathbf{B}_{p,\t}^r(\T^d)$, $0<r<2$, representation~\eqref{tlpb.0} and equivalence~\eqref{tlpb.1} are valid.
\end{corollary}

\begin{corollary}\label{corex4}
 Let $1\le p,\t<\infty$ and let $Q=(K_j^\mathcal{V})_{j\in \Z_+}$.
 Then, for every $f\in \mathbf{F}_{p,\t}^r(\T^d)$, $0<r<2$, representation~\eqref{tlpf.0} and equivalence~\eqref{tlpf.1} are valid.
\end{corollary}

\section{Remarks about approximation by sampling-type operators}\label{sec7}
In the above theorems, we studied approximation properties of the Smolyak algorithm generated by families of quasi-interpolation operators $Q=\(Q_j(\cdot,\vp,\w\vp)\)_{j\in \Z_+}$, where $\w\vp=\(\w\vp_j\)_{j}$ is a sequence of functions satisfying $\sup_j \|\w\vp_j\|_{\mathcal{L}_{p',j}(\T)}<\infty$. Note that instead of sequences of functions $\w\vp=\(\w\vp_j\)_{j}$, we can deal with sequences of distributions, which for example generate sampling-type operators on the Smolyak grid.
In particular, taking into account Remark~\ref{rem1}, it is not difficult to verify that Theorems~\ref{tb1} and~\ref{tb2} remain valid if we replace condition 2) by condition~\eqref{dis} with $\s\ge 0$.  The same is valid for Theorems~\ref{tf1} and~\ref{tf2} if we take $\s=0$ in~\eqref{dis}. A typical example of a distribution $\w\vp_j$, which can be considered in the mentioned theorems, is a linear combination of the shifted delta functions, i.e.,
$$
\w\vp_j(x)=\sum_{|\nu|\le N_j}a_{\nu}^j \d(x-t_\nu^j),\quad a_\nu\in \C,\quad t_\nu^j\in \T.
$$
In this case, the corresponding sampling operators are defined by
$$
S_j(f,\vp)(x)=Q_j(f,\vp,\w\vp)(x)=2^{-j}\sum_{k\in A_j} \(\sum_{|\nu|\le N_j} a_{\nu}^j f(x_k^j-t_\nu^j)\)\vp_j(x-x_k^j).
$$
For these operators, we have the following analogues of Theorems~\ref{tb1} and~\ref{tf1}.

\begin{proposition}\label{prb1}
Let $1\le p, \t\le \infty$, $1/p<r<s$, and let $Q=\(S_j(\cdot,\vp)\)_{j\in \Z_+}$, where
the sequence
$\vp=\(\vp_j\)_{j}$  satisfies the following conditions:
\begin{itemize}
  \item[$1)$] $\vp_j\in \mathcal{T}_j^1$ and $\sup_j \|(\h{\vp_j}(k))_k\|_{M_p(\T)}<\infty$;
  \item[$2)$] there exists $\d>0$ such that
  \begin{equation*}
  \sup_j\bigg\|\bigg(\frac{1-\h{\vp_j}(k)\sum_{|\nu|\le N_j} a_{\nu}^j e^{-{\rm i}kt_\nu^j}}{(2^{-j}{\rm i}k)^s}\phi_0(\d 2^{-j}k)\bigg)_{\!\!k}\bigg\|_{M_p(\T)}<\infty.
\end{equation*}
\end{itemize}
Then, for every $f\in \mathbf{B}_{p,\t}^r(\T^d)$, representation~\eqref{tlpb.0} and equivalence~\eqref{tlpb.1} are valid.
\end{proposition}

\begin{proposition}\label{prf1}
  Let $1\le p,\t<\infty$, $\max\{1/p,1/\t\}<r<s$,  and let $Q=\(S_j(\cdot,\vp)\)_{j\in \Z_+}$, where
the sequence
$\vp=\(\vp_j\)_{j\in \Z_+}$  satisfies conditions of Proposition~\ref{prb1} and additionally, for each $i=1,\dots,d$,
$$
\(\(\h{\vp_{j_i}}(k_i)\)_{\k}\)_{\j\in \Z_+^d} \in M_p(\T^d,\ell_\t),
$$
$$
\(\bigg(\frac{1-\h{\vp_{j_i}}(k_i)\sum_{|\nu|\le N_j} a_{\nu}^j e^{-{\rm i}k_it_\nu^j}}{(2^{-j_i} {\rm i}k_i)^s}\bigg)_{\k\in \Z^d}\)_{\j\in \Z_+^d}\in M_p(\T^d,\ell_\t).
$$
Then, for every $f\in \mathbf{F}_{p,\t}^r(\T^d)$, representation~\eqref{tlpf.0} and equivalence~\eqref{tlpf.1} are valid.
\end{proposition}

\bigskip

\noindent\textbf{Acknowledgements} 
 The author is indebted to the referees for thorough reading and valuable suggestions and comments.

\bigskip

                         \end{document}